\numberwithin{equation}{subsection}
\theoremstyle{plain}
\newtheorem{introthm}{Theorem}
\newtheorem{theorem}{Theorem}[section]
\newtheorem{lemma}[theorem]{Lemma}
\newtheorem{corollary}[theorem]{Corollary}
\newtheorem{proposition}[theorem]{Proposition}
\theoremstyle{definition}
\newtheorem{definition}[theorem]{Definition}
\newtheorem{remark}[theorem]{Remark}
\newtheorem{example}[theorem]{Example}
\title[McDuff \& Prime von Neumann algebras from Thompson-Like Groups]{McDuff and Prime von Neumann algebras arising from Thompson-Like Groups}
\author[P. DeBonis]{Patrick DeBonis}
\address{Department of Mathematics, Purdue University, Mathematical Sciences Bldg, 150 N University St, West Lafayette, IN 47907, USA.}
\email{pdebonis@purdue.edu}
\author[R. de Santiago]{Rolando de Santiago}
\address{Department of Mathematics \& Statistics, Cal State University Long Beach, 1250 Bellflower Blvd, Long Beach, CA 90840, USA.}
\email{rolando.desantiago@csulb.edu}
\author[K. Khan]{Krishnendu Khan}
\address{Department of Mathematics and Statistics, University of Maine, 323 Neville Hall, Orono, 04469, ME, USA.}
\email{krishnendu.khan@maine.edu}
\begin{document}

\begin{abstract}
     In this paper we show that the cloning system construction of Skipper and Zaremsky \cite{skipper2021almost}, under sufficient conditions, gives rise to Thompson-Like groups which are stable; in particular, these are McDuff groups in the sense of Deprez and Vaes \cite{deprez2018inner}. This answers a question of Bashwinger and Zaremsky posed in \cite{bashwinger2021neumann} in the affirmative. In the opposite direction, we show that the group von Neumann algebra for the Higman-Thompson groups $T_d$ and $V_d$ are both prime II$_1$ factors. This follows from a new deformation/rigidity argument for a certain class of groups which admit a proper cocycle into a quasi-regular representation that is not necessarily weakly $\ell^2$. 
\end{abstract}

\maketitle

\tableofcontents

\section{Introduction}
II$_1$ factors are a central object in the study in the classification of  von Neumann algebras. Given a group $G$ (or a group action on a standard probability space $G\curvearrowright(X, \mu)$), Murray and von Neumann described a natural process to associate a von Neumann algebra  $L(G)$, termed the group von Neumann algebras (or $L^\infty(X)\rtimes G$ termed the group measure space construction),  \cite{murray1937rings,murray1943rings}. In the case when $G$ is an infinite group,  $L(G)$ will be a II$_1$ factor when $G$ is an ICC group, while $L^\infty(X) \rtimes G$ will be a II$_1$ factor when the action is free, ergodic, and probability measure-preserving.  An important problem in this field is to determine what properties of the group, or the group action,  give rise to von Neumann algebraic invariants.

In this regime, the McDuff property for a II$_1$ factor $M$, which states that $M\cong M\bar\otimes \mathcal{R}$ where $\mathcal{R}$ is the unique hyperfinite II$_1$ factor, and its generalizations have provided the tools that enabled us to classify large classes of von Neumann algebras \cite{mcduff1970central}. At the opposite end of the spectrum lies  primeness of II$_1$ factors, which means that $M$ does not admit a decomposition as a tensor product of II$_1$ factors.  In this paper, we investigate the McDuff property for group measure space von Neumann algebras associated to generalizations of the Thompson groups and primeness for the group von Neumann algebra of two of the Higman-Thompson Groups.

\subsection{Statement of the results}
Stefan Witzel and Matthew Zaremsky in \cite{witzel2018thompson} and Rachel Skipper and Zaremsky in \cite{skipper2021almost} developed a framework that both unified several existing generalizations of the Thompson Groups and provides an axiomized approached for constructing new examples. See for example \cite{aroca2022new}, \cite{brin2007algebra}, \cite{dehornoy2006group}, \cite{brady2008pure}, \cite{nekrashevych2004cuntz}, \cite{rover1999constructing}, \cite{bashwinger2021neumann}, \cite{bashwinger2023neumann}. This machinery, referred to as \emph{cloning systems}, is defined and explained in detail in Section \ref{section:CS} (see also \cite{UserGuideCloningZar}).

In \cite{bashwinger2021neumann} Eli Bashwinger and Zaremsky were the first to study the group von Neumann Algebras of these Thompson-Like groups arising from cloning systems. They were able to determine sufficient conditions for a cloning system to satisfy, to ensure that $\mathscr{T}_d(G_*)$ (the notation for a Thompson-Like group defined in Section \ref{section:CS}) is ICC, meaning $L(\mathscr{T}_d(G_*))$ will be a II$_1$ factor.  In particular, they gave a class of groups where $L(\mathscr{T}_d(G_*))$ is a \emph{McDuff} II$_1$ factor.  In the same paper, Bashwinger and Zaremsky asked when is $\mathscr{T}_d(G_*)$ a \emph{McDuff group}? Defined by Deprez and Vaes in \cite{deprez2018inner}, a group $G$ is a \textit{McDuff group}  if it admits an action on a probability space $(X, \mu)$ such that $L^{\infty}(X, \mu) \rtimes G$ is a McDuff II$_1$ factor. We show that the main class of groups studied by Bashwinger and Zaremsky always admit such an action, and thus are McDuff groups. Additionally, there are non-ICC examples of Thompson-Like groups from cloning systems that are McDuff groups.

\begin{introthm}[Theorem \ref{thmstb}]\label{MainThrm:McDuff}
    Let $((G_n)_{n \in \mathbb{N}}, (\rho_n)_{n \in \mathbb{N}}, (\kappa^n_k)_{k \leq n})$ be a fully compatible, slightly pure, and uniform $d$-ary cloning system.
Then there exists an ergodic free p.m.p action such that $L^{\infty}(X, \mu) \rtimes \mathscr{T}_d(G_*)$ are McDuff factors; in particular $\mathscr{T}_d(G_*)$ is a McDuff group.
\end{introthm}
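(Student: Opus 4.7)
The plan is to prove the stronger statement that $\mathscr{T}_d(G_*)$ admits a \emph{stable} free ergodic p.m.p.\ action in the sense of Jones and Schmidt, and then to invoke the well-known equivalence between Jones-Schmidt stability of such an action of $G$ and McDuffness of the associated crossed product $L^\infty(X,\mu) \rtimes G$. Concretely, this reduces to producing a non-trivial diffuse abelian subalgebra of the central sequence algebra $M' \cap M^\omega$, where $M := L^\infty(X,\mu) \rtimes \mathscr{T}_d(G_*)$.

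First, I would exploit the self-similar nature of the Thompson-like groups arising from cloning systems to construct an infinite sequence $(s_n)_{n \geq 1}$ of pairwise commuting non-trivial elements of $\mathscr{T}_d(G_*)$ with supports lying in disjoint, shrinking subtrees of the underlying rooted $d$-ary tree. The strategy is to fix a single non-trivial seed element coming from some $G_m$, clone it via the maps $\kappa^n_k$ into deeper and deeper leaves of the tree, and use the \emph{fully compatible}, \emph{slightly pure}, and \emph{uniform} axioms to guarantee that the cloned elements remain non-trivial, have disjoint supports, and commute pairwise as elements of $\mathscr{T}_d(G_*)$---not merely as tree automorphisms.

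Second, I would take $(X,\mu) = (Y,\nu)^{\mathscr{T}_d(G_*)}$ with its Bernoulli shift action for some non-atomic standard probability space $(Y,\nu)$; this action is automatically free, ergodic, and p.m.p. In the crossed product $M$, the canonical unitaries $u_{s_n}$ pairwise commute. I would then verify that the sequence $(u_{s_n})$ is asymptotically central in $M$: the shrinking-support property implies that any fixed $g \in \mathscr{T}_d(G_*)$ eventually commutes with $s_n$, and the strong mixing of the Bernoulli action forces asymptotic commutation with $L^\infty(X,\mu)$. Passing to an ultrapower $M^\omega$, the sequence $(u_{s_n})$ yields a diffuse abelian subalgebra of $M' \cap M^\omega$, whence $M$ is McDuff by the classical Connes-McDuff central sequence criterion.

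The principal obstacle is Step 1: extracting, purely from the cloning system axioms, a concrete family of pairwise commuting non-trivial elements supported on disjoint subtrees. The axioms of full compatibility, slight purity, and uniformity were tailored precisely to ensure such coherent insertion of cloned elements, and unpacking them with care should produce the desired family; once this is in hand, Steps 2 and 3 reduce to standard applications of Bernoulli action techniques together with the McDuff absorption criterion, and the final statement about $\mathscr{T}_d(G_*)$ being a McDuff group in the sense of Deprez and Vaes follows immediately.
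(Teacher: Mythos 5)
Your overall frame (prove stability, deduce McDuffness of a crossed product) matches the paper's, but the execution has genuine gaps. The central one is Step 1 combined with the centrality claim in Step 2: a sequence $(s_n)$ of non-trivial elements supported on shrinking subtrees can \emph{not} be asymptotically central in all of $\mathscr{T}_d(G_*)$. If the supports shrink to an interior point $p$, any $g$ with $g(p)\neq p$ eventually conjugates $s_n$ to an element with disjoint support, so $gs_ng^{-1}\neq s_n$ for all large $n$; if they shrink to the right endpoint (the only option compatible with slight purity), any $g$ whose image under the germ homomorphism $\theta\circ\pi:\mathscr{T}_d(G_*)\to\mathbb{Z}$ is non-zero rescales the supporting interval and again fails to eventually commute. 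This is exactly why the paper does not work with central sequences in the whole group: it decomposes $\mathscr{T}_d(G_*)\cong D_d(G_*)\rtimes\mathbb{Z}$, shows the sequences are asymptotically commuting only inside the kernel $D_d(G_*)$ (Lemmas \ref{lem:ac1}, \ref{lem:ac2}), and then invokes Tucker-Drob's Theorem \ref{TD18}, which upgrades double asymptotic commutativity of a normal subgroup with amenable quotient to stability of the whole group. Without that (or some substitute), your Step 1 cannot be completed as stated.

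Two further problems. First, your Step 3 concludes McDuffness from a diffuse \emph{abelian} subalgebra of $M'\cap M^\omega$; that only gives property Gamma. McDuff's criterion requires two non-commuting central sequences, i.e.\ a non-abelian central sequence algebra --- this is precisely what the paper's \emph{doubly} asymptotically commutative condition (sequences $a_n,b_n$ with $a_nb_n\neq b_na_n$, obtained by staggering supports inside $[F_d,F_d]$) is designed to supply, and your pairwise commuting family does not. Second, the Bernoulli action is the wrong choice of action: by mixing, $\|\sigma_{s_n}(f)-f\|_2^2\to 2(\|f\|_2^2-|\tau(f)|^2)$ as $s_n\to\infty$, so the unitaries $u_{s_n}$ \emph{fail} to asymptotically commute with $L^\infty(X)$ for every non-scalar $f$; mixing is used in this subject to rule out central sequences coming from the group, not to produce them. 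The action witnessing stability has to be produced differently, which is what Tucker-Drob's theorem does.
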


With cloning systems in mind, we can view the remaining results in this paper that pertain to the Higman-Thompson group $T_d$ and $V_d$ and they're corresponding von Neumann algebras, as a special case of a specific cloning system construction. 

The notion of proper proximality was introduced in \cite{boutonnet2021properly} by Boutonnet, Ioana and Peterson to study ``smallness'' phenomena at infinity for groups by generalizing the notion of bi-exactness to a large class of groups. Examples of properly proximal groups include bi-exact groups, non-elementary relatively hyperbolic groups, non-elementary convergence groups, all lattices in non compact semi-simple Lie groups, most graph products of groups etc. See for reference  \cite{boutonnet2021properly}, \cite{ding2021ProperProximality}, \cite{ding2022properly} \cite{ding2022upgrading}, \cite{ding2022first}, \cite{horbez2023proper}. In this article we show proper proximality for the Higman-Thompson groups $T_d$ and $V_d$.

\begin{introthm}[Corollary \ref{Cor:PropP}]\label{MainThmPropP}
    The Higman-Thompson groups $T_d$ and $V_d$ are properly proximal for every integer $d\geq 2$.
\end{introthm}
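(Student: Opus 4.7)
The plan is to verify a criterion for proper proximality --- most likely the one established in the section preceding this corollary --- by exhibiting appropriate boundary actions for $V_d$ and $T_d$. For $V_d$, the obvious candidate is the canonical action on the Cantor set $X = \{0, 1, \ldots, d-1\}^{\mathbb{N}}$ by prefix-exchange homeomorphisms: this action is minimal, topologically amenable, and extremely transitive, in that any two clopen partitions of $X$ into basic cylinders of equal size are exchanged by some element of $V_d$. In particular, any finite tuple of pairwise distinct points may be moved into an arbitrarily small clopen set. For $T_d$, the natural parallel is the action on $S^1$ by orientation-preserving piecewise linear homeomorphisms with slopes and breakpoints in $\mathbb{Z}[1/d]$, which is highly transitive on cyclically ordered finite tuples of $d$-adic rationals.

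Concretely, I would first recall the standard dynamical formulation: $G$ is properly proximal precisely when there is no $G$-invariant state on an appropriate ``boundary'' operator system associated with $G$. I would then exploit the combinatorial flexibility of the $V_d$-action on $X$ to produce, for any given finitely-supported configuration of group elements, compositions of prefix-exchanges that ``spread'' and ``contract'' the configuration to infinity in a controlled way, thereby ruling out any such invariant state. For $T_d$, one may either run the analogous argument directly using the flexibility of the circle action, or, if the criterion being invoked admits a suitable hereditary property (for instance, for inclusions arising from a co-amenable extension), deduce proper proximality of $T_d$ from that of $V_d$ via the inclusion $T_d \le V_d$.

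The main obstacle is that the Higman-Thompson groups are neither hyperbolic nor known to be bi-exact, so the classical shortcuts to proper proximality (via Ozawa-style bi-exactness, or negative-curvature geometry on the Bass-Serre/CAT(0) side) are not directly available. The proof must instead exploit the combinatorial abundance of prefix-exchange (resp.\ piecewise linear) transformations to rule out invariant states on the relevant boundary operator system essentially by hand. A secondary technical subtlety is that proper proximality is not preserved under arbitrary subgroup inclusion, so one cannot naively push proper proximality of $V_d$ down to $T_d$ --- either a dedicated circle-action argument for $T_d$ or an appropriate permanence property of the criterion is required to complete the corollary for both groups.
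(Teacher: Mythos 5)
Your proposal does not follow the paper's route, and as it stands it has genuine gaps rather than being a complete alternative. The paper's proof of Corollary \ref{Cor:PropP} is short and indirect: it constructs a \emph{proper} $1$-cocycle $c:V_d\to\ell^2(V_d/V_{[0,1/d)})$ generalizing Farley's construction (Theorem \ref{cocycle}), shows that $V_{[0,1/d)}$ is not co-amenable in $V_d$ via an explicit ping-pong/paradoxical decomposition inside $V_d/V_{[0,1/d)}$ (Lemma \ref{lem:notcoamen}), so the quasi-regular representation is non-amenable; this gives property (HH) in the sense of \cite{ozawapopa2010cartanII}, and proper proximality then follows from \cite[Proposition 1.6]{boutonnet2021properly}. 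The same cocycle and the same pair of ping-pong elements live in $T_d$, so $T_d$ is handled identically rather than by any hereditary argument from $V_d$. Your plan of ruling out invariant states on the boundary operator system ``by hand'' using the dynamics of the Cantor set (resp.\ circle) action skips precisely the hard step: producing the spreading/contracting estimates that would kill an invariant state on $\mathcal{S}(G)$ is the entire content of a proper proximality proof, and you give no mechanism for it. This is not a routine verification that one can defer.

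Two specific points would need repair even to begin your route. First, you assert that the $V_d$-action on $\{0,\dots,d-1\}^{\mathbb{N}}$ is topologically amenable; that would imply $V_d$ is exact, which is not known (it is a well-known open problem for the Thompson family), so this claim cannot be used. Second, you correctly note that proper proximality does not pass to subgroups, but then leave the $T_d$ case resting on either an unproved permanence property or an unexecuted circle-action argument; the paper avoids this entirely because the cocycle and the non-co-amenability witnesses are already elements of $T_d$. If you want a working proof along the paper's lines, the essential inputs you are missing are: (i) the properness of the Farley-type cocycle, which requires the careful bookkeeping with standard $d$-adic partitions and standard affine maps in Theorem \ref{cocycle}; and (ii) the non-amenability of $\ell^2(V_d/V_{[0,1/d)})$, which is where the genuine dynamical flexibility of $V_d$ is actually used, but in the concrete form of two explicit nesting elements $h_1,h_2$ generating a free group acting paradoxically on the coset space.
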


As  Popa's deformation/rigidity theory came into focus, there came an avalanche of unprecedented results establishing primeness for von Neumann algebras arising from groups, crossed products of groups acting on measure spaces, and equivalence relations. See for example \cite{popa2008superrigidity}, \cite{chifan2010bass}, \cite{vaes2013one}, \cite{boutonnet2012solid}, \cite{chifan2013structural}, \cite{hoff2016neumann}, \cite{dabrowski2016unbounded}, \cite{chifan2016primeness} \cite{chifan2018tensor}, \cite{drimbe2020prime}, \cite{isono2021tensor}, \cite{chifan2023tensor}, \cite{patchell2023primeness}, \cite{chifan2023rigidity}.  
Using soft analysis, we obtain primeness results for 
Higman-Thompson's groups $T_d$ and $V_d$ for integers $d\geq 2$.

\begin{introthm}[Corollary \ref{cor:Prime}]\label{MainThm:Prime}
     The group von Neumann algebras $L(T_d)$ and $L(V_d)$ of the Higman-Thompson group $T_d$ and $V_d$ are prime for every integer $d\geq 2$. 
\end{introthm}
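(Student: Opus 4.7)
The plan is to deduce Corollary \ref{cor:Prime} directly from the general primeness theorem of the paper which, as promised in the abstract, asserts that an ICC group $G$ that is properly proximal and admits a proper $1$-cocycle into a quasi-regular representation $\ell^2(G/H)$ has $L(G)$ prime. Granting this framework, the proof for $G \in \{T_d, V_d\}$ is reduced to verifying the two hypotheses.

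Proper proximality of $T_d$ and $V_d$ is already available as Theorem \ref{MainThmPropP} / Corollary \ref{Cor:PropP}, so that input is free. For the cocycle, I would exploit the natural action of $V_d$ on the Cantor set $\mathfrak{C}_d$ (resp. of $T_d$ on $S^1$): fix a non-dyadic point $x$, set $H = \mathrm{Stab}_G(x)$, and consider the quasi-regular representation $\ell^2(G/H)$, which encodes the $G$-orbit of $x$. A proper cocycle into this representation can be assembled from the Gaussian-type construction attached to the orbit: the cocycle's values measure how much a group element perturbs a basepoint in the orbit data. Properness amounts to the assertion that the joint stabilizer in $G$ of any finite subset of the orbit is finite, which for $T_d$ and $V_d$ is a consequence of tree-pair rigidity — an element of $V_d$ trivial on enough Cantor-set points is trivial on a sufficiently fine dyadic partition, hence trivial as an element of $V_d$.

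With both inputs in place, the general theorem supplies a Gaussian s-malleable deformation of $L(G)$ from the cocycle; proper proximality then plays the role that bi-exactness plays in the classical Ozawa--Popa dichotomy. Assuming toward a contradiction that $L(G) \cong P_1 \bar\otimes P_2$ with both $P_i$ II$_1$ factors, a spectral-gap / $L^2$-rigidity argument applied to the deformation together with the proper proximality input forces one of the $P_i$ to be amenable. This contradicts the fact that both tensor legs of an ICC, non-amenable II$_1$ factor are non-amenable, yielding primeness.

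The main obstacle is the cocycle step: the representation $\ell^2(G/H)$ is not weakly contained in $\ell^2(G)$, so Peterson's classical $\ell^2$-cocycle primeness theorem is unavailable, and one must instead invoke the paper's new deformation/rigidity machinery which handles quasi-regular (not necessarily weakly $\ell^2$) targets. At the level of the corollary this difficulty is absorbed into the general theorem; the remaining work is simply to exhibit the proper cocycle on $T_d$ and $V_d$ as described above, after which the corollary follows immediately.
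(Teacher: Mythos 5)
Your proposal does not match the hypotheses of the paper's general theorem, and the final dichotomy you invoke is incorrect. Theorem \ref{thm:prime} does \emph{not} take proper proximality as an input; its hypotheses are that $G$ is ICC, that $L(G)$ does not have property Gamma, that the subgroup $H$ is weakly malnormal, that $L(H)$ is weakly bicentralized in $L(G)$, and that there is an unbounded $1$-cocycle into $\ell^2(G/H)$. Your sketch verifies none of the last three. In the paper, $H$ is taken to be $V_{[0,1/d)}$ (the fixator of the interval $[0,1/d)$), not the stabilizer of a single Cantor-set point; weak malnormality is checked by exhibiting $h\in T_d$ with $V_{[0,1/d)}\cap V_{[0,1/d)}^{h}=\{1\}$; and weak bicentralization requires the virtual-centralizer computation of Lemma \ref{lemma:VC} together with \cite[Lemma 3.4]{BannonFull2020} --- a substantial piece of work that your ``tree-pair rigidity'' remark (which addresses faithfulness of the action, a different property) does not replace. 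The non-Gamma hypothesis is supplied by non-inner amenability of $T_d$ and $V_d$, which you also omit.

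More seriously, your concluding step fails: if $L(G)\cong P_1\bar\otimes P_2$ and the deformation argument forces one $P_i$ to be amenable (or amenable relative to $L(H)$), this is \emph{not} a contradiction --- a non-amenable II$_1$ factor can perfectly well have an amenable tensor leg (any McDuff factor does, and indeed $L(F_d)$ is McDuff). The paper's proof of Theorem \ref{thm:prime} instead splits into two cases: if neither leg is amenable relative to $L(H)$, the Gaussian deformation converges uniformly on the unit ball, contradicting unboundedness of the cocycle (Lemmas \ref{uniform-convergence-on-factors}--\ref{bounded-cocycle}); if one leg $\mathcal P$ is amenable relative to $L(H)$, then fullness of $L(G)$ makes $\mathcal P$ full, weak bicentralization gives $\mathcal P\prec_M L(H)$ via Lemma \ref{lemma:isono}, and weak malnormality plus Lemma \ref{lemma:intersec} intertwines the diffuse algebra $\mathcal P$ into a finite group algebra, which is the actual contradiction. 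Without these ingredients your argument does not close.
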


In his recent work, \cite{patchell2023primeness} Patchell proved primness results for crossed product von Neumann algebras arising from generalized Bernoulli actions. We show $V_d$ and $T_d$ are concrete example and in particular, the following holds. 

\begin{introthm}[Corollary \ref{cor:Prime2}]\label{MainThm:Prime2}
    Let $(B, \tau)$ be a II$_1$ factor and $V_{[0,1/d)} < V_d$ the subgroup of elements fixing $[0,1/d)$. The natural action of $V_d \curvearrowright V_d/V_{[0,1/d)}$ gives rise to a prime II$_1$ factor, $M = B^{V_d/V_{[0,1/d)}} \rtimes V_d$ for every integer $d\geq 2$. Similarly, $N = B^{T_d/T_{[0,1/d)}} \rtimes T_d$ is a prime II$_1$ factor  for every integer $d\geq 2$. 
\end{introthm}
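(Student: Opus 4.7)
\medskip

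\textbf{Proof proposal for Theorem \ref{MainThm:Prime2}.} The plan is to realize this as a direct application of Patchell's primeness criterion from \cite{patchell2023primeness} for generalized Bernoulli crossed products, once we feed in the proper proximality of $V_d$ and $T_d$ supplied by Theorem \ref{MainThmPropP} together with the structural features of the coset action $V_d \curvearrowright V_d/V_{[0,1/d)}$ (respectively for $T_d$). Concretely, write $I := V_d/V_{[0,1/d)}$ and view $M = B^I \rtimes V_d$ as the generalized Bernoulli shift associated to the transitive action $V_d \curvearrowright I$ with stabilizer $V_{[0,1/d)}$. Patchell's theorem typically requires: (i) the acting group is properly proximal; (ii) the action $G\curvearrowright I$ is faithful and has sufficient mixing/spreading behavior; and (iii) the stabilizer subgroup is of the right flavor (e.g.\ satisfies a suitable "relative" condition). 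Items (i) is precisely Theorem \ref{MainThmPropP}; the remaining conditions must be verified by hand for the Thompson-type actions.

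The first step is to unpack the combinatorics of $V_d \curvearrowright V_d/V_{[0,1/d)}$. Elements of $V_d$ are encoded by pairs of $d$-ary tree diagrams together with a bijection of leaves, and the coset space $V_d/V_{[0,1/d)}$ is in natural bijection with the orbit of the interval $[0,1/d)$ under $V_d$, namely the collection of all standard dyadic (more precisely $d$-adic) subintervals of $[0,1)$ together with their permutations. In particular, the action is faithful and transitive, and for any finite subset $F \subset I$ there exist group elements moving $F$ entirely off any preassigned finite set, so the action is highly transitive in the sense needed for weak mixing of the generalized Bernoulli shift, even when restricted to infinite subgroups that fail to fix a common leaf. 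The second step is to establish the stabilizer side: $V_{[0,1/d)}$ is the pointwise stabilizer of a proper standard interval and therefore sits as the "complementary" copy $V_d$ acting on $[1/d,1)$ inside $V_d$. This allows one to compute intersections $gV_{[0,1/d)}g^{-1} \cap V_{[0,1/d)}$ for $g \notin V_{[0,1/d)}$ and verify they continue to carry the relative weak-mixing data required by \cite{patchell2023primeness}.

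The third step is to run Patchell's dichotomy. Assume toward contradiction that $M = M_1 \,\bar{\otimes}\, M_2$ is a non-trivial tensor decomposition into II$_1$ factors. By Patchell's framework one obtains, using the proper proximality of $V_d$ as the smallness-at-infinity input, that (up to unitary conjugation and a corner) one of the $M_i$ is captured inside an algebra of the form $B^J \rtimes H$ for $H$ a stabilizer-type subgroup and $J$ an $H$-invariant subset of $I$; the weak mixing/spreading input from Step 1 together with the computation of stabilizer intersections in Step 2 then force $J = \emptyset$ or $J = I$, and hence one of the factors is trivial, a contradiction. The $T_d$ case is handled identically once one observes that $T_d$ is also properly proximal (Theorem \ref{MainThmPropP}) and that $T_{[0,1/d)}$ admits the same description as a pointwise stabilizer of a standard interval, so the same transitivity and intersection computations apply verbatim.

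The main obstacle I foresee is purely technical: verifying the precise "relative weak mixing" (or whichever mixing hypothesis \cite{patchell2023primeness} demands) for the action $V_d \curvearrowright V_d/V_{[0,1/d)}$. This reduces to showing that for any finite $F \subset V_d \setminus V_{[0,1/d)}$ and any finite $E \subset I$, one can find $g \in V_d$ with $gFg^{-1} \cap V_{[0,1/d)} = \{e\}$ while simultaneously controlling the action on $E$; this is believable from the abundance of prefix-replacement elements in $V_d$, but the bookkeeping on $d$-adic subintervals needs care. Once this verification is in place, the rest is a direct invocation of Patchell's theorem together with Theorem \ref{MainThmPropP}.
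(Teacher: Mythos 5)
You correctly identify the paper's strategy at the top level: the result is obtained by verifying the hypotheses of Patchell's criterion \cite[Corollary 1.2]{patchell2023primeness} for the coset action $V_d \curvearrowright V_d/V_{[0,1/d)}$. However, the hypotheses you attribute to that criterion are not the ones actually used, and the mismatch leaves a genuine gap. As invoked in the paper, the conditions to check are: (a) non-inner amenability of the acting group (supplied by \cite{bashwinger2022non}, not by proper proximality); (b) weak malnormality of the stabilizer, i.e.\ finiteness of $\bigcap_i H \cap g_i H g_i^{-1}$ for finitely many $g_i$ --- here one conjugate by the cyclic rotation $h \in T_d$ already gives $V_{[0,1/d)} \cap V_{[0,1/d)}^{h} = \{1\}$; (c) \emph{non-co-amenability} of $V_{[0,1/d)}$ in $V_d$; and (d) trivial normal core, i.e.\ faithfulness of the action. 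Your proposal handles (d), gestures at (b) via the intersection computation in your "second step," but replaces (a) with proper proximality and never addresses (c) at all. Condition (c) is not a formality: it is equivalent to non-amenability of the quasi-regular representation $\ell^2(V_d/V_{[0,1/d)})$, and the paper must prove it (Lemma \ref{lem:notcoamen}) by exhibiting a paradoxical decomposition of a subset of $V_d/V_{[0,1/d)}$ using two explicit "nesting" elements $h_1, h_2$ that generate a free group via ping-pong. No amount of transitivity or "spreading" of the action yields this; a transitive action of a non-amenable group can still have co-amenable stabilizers. Without (c) the criterion does not apply.

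A secondary issue is that your "third step" invents a dichotomy ("one of the $M_i$ is captured inside $B^J \rtimes H$ for an invariant $J$") that is not how the argument runs; the actual mechanism behind the primeness criteria in this circle of ideas is a deformation/rigidity alternative pitting relative amenability over $L(H)$ against intertwining into $L(H)$, which is why non-co-amenability and weak malnormality are the operative hypotheses. To repair the proposal, drop the proper-proximality framing, cite non-inner amenability of $V_d$ and $T_d$, keep your faithfulness and conjugate-intersection observations (sharpened to weak malnormality), and add the paradoxical-decomposition argument showing $V_{[0,1/d)}$ is not co-amenable in $V_d$ (and likewise for $T_{[0,1/d)}$ in $T_d$, noting the elements $h_1, h_2$ lie in $T_d$).
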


\subsection{Comments on the Proofs}

In Section \ref{sec:stab} we prove Theorem \ref{MainThrm:McDuff} by using stable measure equivalence relations (defined in the same section) and a result of Tucker-Drob from \cite{tucker2020invariant}. In particular, the same conditions on a cloning system that Bashwinger and Zaremsky showed in \cite{bashwinger2021neumann} are sufficient for $L(\mathscr{T}_d(G_*))$ to be a McDuff II$_1$ factor are also sufficient for $\mathscr{T}_d(G_*)$ to be a McDuff group. However, we should remark that in general $G$ being a countable ICC McDuff group does not imply $L(G)$ is a McDuff II$_1$ factor, as was proved by Kida in \cite{kida2015stability}. He showed the Baumslag-Solitar groups are McDuff groups, while Fima showed the group von Neumann algebra is prime in \cite{fima2010}. Independently, Bashwinger showed that $F_d$ is a McDuff group using character rigidity in \cite{bashwinger2023neumann}. Our result provides an alternate proof to this fact and generalizes it to a natural class of Thompson-like groups that arise from cloning systems. 

We prove Theorem \ref{MainThmPropP} in Section \ref{sec:Primeness} by demonstrating that $T_d$ and $V_d$ both admit proper cocycle into a non-amenable representation and thus are properly proximal by a result of \cite{boutonnet2021properly}. The cocycle, described in Section \ref{sec:Cocycles}, is the natural generalization of the one Farley defined in \cite{farley2003proper}.

Theorem \ref{MainThm:Prime} is a special case of Theorem \ref{thm:prime}, proved in section 4. The proof leverages relative amenability against Popa's intertwining technique, by way of the Gaussian deformation associated to the quasi-regular representation. In particular, $T_d$ and $V_d$ admit cocycles into quasi-regular representations that are not weakly $\ell^2$. Therefore, Theorem \ref{thm:prime} encompasses primeness for a different class of groups than \cite{peterson2009} result for solidity of groups that admit a proper cocycle into a multiple of the left regular representation, by generalizing the type of representation. In contrast, \cite{chifanSinclair2013} and \cite{chifan2016primeness} obtain solidity and primeness results for groups admitting quasi-cocycles, and more generally arrays, into representations that are weakly $\ell^2$. Given the respective work of Jolissaint \cite{jolissaint1998central} and Picioroaga \cite{picioroaga2006inner} on showing $L(F_d)$ is McDuff for $d\geq 2$, we also know that $L(T_d)$ and $L(V_d)$ are prime but not solid, since solidity passes to subalgebras.

The inner amenability / non-inner amenability dichotomy of $F$ and $V$ persists into their von Neumann algebras and into larger families of Thompson-Like groups. The ``$F$-like'' groups $\mathscr{T}_d(G_*)$ Bashwinger and Zaremsky identified give rise to McDuff II$_1$ factors $L(\mathscr{T}_d(G_*))$, while Theorem \ref{MainThm:Prime} shows $L(V_d)$ is prime. This dichotomy continues into the group measure space setting where Theorem \ref{MainThrm:McDuff} shows the same ``$F$-like'' groups $\mathscr{T}_d(G_*)$ have an ergodic free p.m.p action such that $L^{\infty}(X, \mu) \rtimes \mathscr{T}_d(G_*)$ is McDuff and, in particular, has property Gamma. In contrast, Corollary \ref{cor:VnonGamma}, which follows from Lemma \ref{lem:notcoamen} and \cite{chifan2016inner}, tells us there exists a free strongly ergodic p.m.p action such that $L^{\infty}(X, \mu) \rtimes V_d$ does not have property Gamma. It would be interesting
to see if the ``$V$-like groups'' arising from cloning systems will follow this pattern.

\section*{Acknowledgements} The authors would like to thank I.~Chifan and S.~Kunnawalkam Elayavalli for their contributions to Theorem \ref{thm:prime}; T.~Sinclair and I.~Chifan for the insightful discussions and assistance fixing an earlier version of this paper; G.~Patchell for his assistance; E.~Bashwinger for his helpful comments, assistance with the diagrams, and for pointing out $L(V_d)$ is not solid; and to M.~Zaremsky for his helpful comments improving the readability of the manuscript. The first author was supported in part by NSF Grant DMS-2055155.

\section{Preliminaries}

\subsection{von Neumann Algebras}\label{subsec:vNA}
A unital $*$-subalgebra $M\subseteq \mathcal B(\mathcal H)$  is called a von Neumann algebra if it is closed in the Weak Operator Topology (WOT); i.e. if $\{x_\lambda\}_{\lambda\in \Lambda}$ is a net in $M$ and $x_0\in \mathcal B(\mathcal H)$ is such that for all $\xi,\eta\in \mathcal H$ $\langle (x_\lambda-x_0)\xi,\eta \rangle\to 0$, then it follows that $x_0\in M$.  Equivalently, von Neumann's bicommutant theorem show that $M$ is a von Neumann algebra if $M=M''$, where $M'=\{y\in \mathcal B(\mathcal H) : yx=xy,\forall x\in M\}$.  

A von Neumann Algebra $(M, \tau)$ is \textit{tracial} if it has a faithful normal tracial state $\tau$. The completion of $M$ with respect to the norm $\|x\|_2 = \sqrt{\tau (x^*x)}$ is denoted by $L^2(M)$. We assume $M$ to be separable unless noted otherwise and that $M \subset \mathcal B (L^2(M))$ is the standard representation. We denote $\mathscr{P}(M)$ the projections for $M$, $\mathcal{U}(M)$ the group of unitaries of $M$, $(M)_1 = \{x\in M | \|x\| \leq 1 \}$ the unit ball of $M$, and $\mathcal Z(M) = M'\cap M$ the center of $M$. 

Let $\mathcal Q\subseteq M$ be a von Neumann subalgebra. The basic construction $\langle M, e_{\mathcal Q}\rangle$ is denoted as the von Neumann subalgebra of $\mathcal B(L^2(M))$ generated by $M$ and the orthogonal projection $e_{\mathcal Q}$ from $L^2(M)$ onto $L^2({\mathcal Q})$. There is a semi-finite faithful trace on $\langle M,e_{\mathcal Q}\rangle$ given by $Tr(xe_{\mathcal Q}y)=\tau(xy)$ for every $x,y\in M$.  Denote by $\mathcal{N}_M(\mathcal Q) = \{u \in \mathcal{U}(M) | u\mathcal{Q}u^*=\mathcal Q \}$ the \textit{normalizer of} $\mathcal Q$ in $M$ and say the subalgebra $\mathcal Q \subset M$ is \textit{regular} if $\mathcal{N}_M(\mathcal Q)'' = M$.

Given a discrete countable group $G$, we consider $\ell^2(G)$, the space of square summable sequences indexed by $G$ with basis $\{ \delta_g\}_{g\in G}$; $G$ acts on $\ell^2(G)$ though the left-regular representation $\lambda:G\to \mathcal B(\ell^2(G))$, which is characterized by linearly extending $\lambda_g(\delta_h):=\delta_{gh}$ for all $g,h\in G$; $$L(G):=\overline{\{\sum_{g\in F} c_g\lambda_g: F\subseteq G \text{ finite},  c_g\in \mathbb{C} \}}^{WOT}= \mathbb{C}[\lambda(G)]''\subseteq \mathcal B(\ell^2(G))$$ is called the group von Neumann algebra associated to $G$. The linear functional, $\tau:L(G)\to\mathbb{C}$ defined by $\tau(x):=\langle x\delta_e,\delta_e \rangle$ is a normal, faithful, tracial state, simply called a trace. It is well-known that $L(G)$ is a II$_1$ factor precisely when $G$ is a non-trivial ICC group.

Given an action $G \curvearrowright^\alpha X$ by a countable discrete group on a standard probability space, the group measure space construction $$L^\infty(X, \mu) \rtimes G := \mathbb{C} \langle \pi_\alpha (L^\infty (X, \mu)), \lambda(G)  \rangle'' \subset \mathcal{B}(\ell^2(G) \bar\otimes L^2(X,\mu))$$ contains a copy of $L(G)$ and $L^\infty(X, \mu)$ with a twisted commutation relation defined by $\pi_\alpha(f) (\delta_g \otimes f_g) = \alpha_{g^{-1}}(f)\delta_g \otimes f_g$ for $f \in L^\infty(X,\mu), f_g \in L^2(X, \mu)$  and $\lambda_g(\delta_h \otimes f_h) = \delta_{gh}\otimes f_h.$ When $G$ is infinite and the action is probability measure preserving (p.m.p), essentially free, and ergodic, $L^\infty(X, \mu) \rtimes G$ is a II$_1$ factor, with trace defined by linearly extending $\tau (x) := \langle x(\delta_e \otimes 1), \delta_e \otimes 1 \rangle.$

For a countable equivalence relation $\mathcal{R} \subset X \times X,$ where $(X,\mu)$ is a probability space, there is an associated von Neumann algebra $L(\mathcal{R}).$  Let $[\mathcal{R}]$ be the full group of Borel automorphisms that preserve equivalence classes in $\mathcal{R}$. Then for each $g \in [\mathcal{R}]$ there is a corresponding unitary $u_g \in \mathcal{U}(L^2(\mathcal{R},m))$ defined by $[u_gf](x,y) =f(g^{-1}x,y)$. We also have that each $a \in L^\infty(X)$, is identified with the multiplication operator in $\mathcal{B}(L^2(\mathcal{R},m))$, which is defined by $[af](x,y) = a(x)f(x,y).$ Then we define the von Neumann algebra of the equivalence relation $\mathcal{R}$ as
$$L(\mathcal{R}) = \{ L^\infty(X), \{u_g: g \in [\mathcal{R}] \} \}'' \subset \mathcal{B}(L^2(\mathcal{R},m)).$$ 
$L(\mathcal{R})$ comes equipped with a faithful normal trace defined by $\tau (x) = \langle x1_D, 1_D \rangle$, where $1_D \in L^2(\mathcal R, m)$ is the characteristic function of $D = \{(x,x) : x \in X \}$. When the equivalence relation comes from an action by a countable discrete group, we have 
\[\mathcal{R} = \mathcal{R}_{G \curvearrowright (X,\mu)} := \{(x,gx) : x \in X, g \in G \}. \]
When the action is free, p.m.p, and ergodic, it is well-known that $L(\mathcal{R}_{G \curvearrowright X})$ is isomorphic to $L^\infty(X, \mu) \rtimes G.$ In Section \ref{sec:stab} we define stable equivalence relations and use this identification integrally. 

Let $M$ be a II$_1$ factor. A central sequence $(a_n)_{n \in \mathbb N}$ in $M$ is a sequence such that $\|xa_n - a_nx\|_2 \rightarrow 0$ as $n \rightarrow \infty$ for all $x \in M$. Two central sequences $(a_n)$ and $(b_n)$ in $M$ are equivalent if $\|a_n - b_n\|_2 \rightarrow 0$ as  $n \rightarrow \infty$. A central sequence is said to be trivial if it is equivalent to a sequence of scalars. A II$_1$ factor $M$ is said to have the \textit{McDuff property} if there exists two non-trivial, non-commuting central sequences in $M$. Equivalently, $M$ is McDuff if $M \cong M \bar \otimes \mathcal{R}_0$, where $\mathcal {R}_0$ is the hyperfinite II$_1$ factor. A II$_1$ factor $M$ has \emph{property Gamma} if it contains at least one non-trivial central sequence. For a II$_1$ factor with separable predual, \textit{not} having property Gamma is equivalent to being \emph{full} \cite[Corollary 3.8]{connes1974almost}. So for this article we use the negation of property Gamma and fullness interchangeably.

\subsubsection{Popa's Intertwining-by-bimodules}

We recall Popa's intertwining-by-bimodules technique. 

Let $N$ and $M$ be von Neumann algebras. Then given two $M$-$N$-bimodule $\mathcal{H}$ and $\mathcal{K}$, we say $\mathcal{H}$ is \textit{contained} in $\mathcal{K}$ and write $\mathcal{H} \subset \mathcal{K}$, if there exists an $M$-$N$-bimodular isometry $V: \mathcal{H} \rightarrow \mathcal{K}.$ We say $\mathcal{H}$ is \textit{weakly contained} in $\mathcal{K}$ and write $\mathcal{H} \prec \mathcal{K}$ if for all $\xi \in \mathcal{H}$, finite subsets $E \subset M$, $F \subset N$ and $\epsilon > 0$ there exits $n \in \mathbb N$ and $\{\eta_1, \dots \eta_n \}$ in $\mathcal{K}$ such that 

\[ | \langle x\xi y, \xi \rangle - \sum_{i=1}^n \langle x \eta_i y, \eta_i \rangle | < \epsilon\]
for all $x \in E$ and $y \in F$. If $\mathcal{H} \prec \mathcal{K}$ and $\mathcal{K} \prec \mathcal{H}$, then we say $\mathcal H$ and $\mathcal K$ are \textit{weakly equivalent} and write $ \mathcal{H} \sim \mathcal{K}$.

\begin {theorem}[{\cite[Theorem 2.1, Corollary 2.3]{Popa06StrongRigidityI}}] Let $( M,\tau)$ be a separable tracial von Neumann algebra and let $\mathcal P, \mathcal Q\subseteq  M$ be (not necessarily unital) von Neumann subalgebras. 
Then the following are equivalent:
\begin{enumerate}
\item There exist $ p\in  \mathscr P(\mathcal P), q\in  \mathscr P(\mathcal Q)$, a $\ast$-homomorphism $\theta:p \mathcal P p\rightarrow q\mathcal Q q$  and a partial isometry $0\neq v\in q M p$ such that $\theta(x)v=vx$, for all $x\in p \mathcal P p$.
\item For any group $\mathcal G\subset \mathcal U(\mathcal P)$ such that $\mathcal G''= \mathcal P$ there is no sequence $(u_n)_n\subset \mathcal G$ satisfying $\|E_{ \mathcal Q}(xu_ny)\|_2\rightarrow 0$, for all $x,y\in  M$.
\item There exist finitely many $x_i, y_i \in M$ and $C>0$ such that  $\sum_i\|E_{ \mathcal Q}(x_i u y_i)\|^2_2\geq C$ for all $u\in \mathcal U(\mathcal P)$.
\item (\cite[Proposition C.1]{vaes07})There exists $n \in \mathbb{N}$, a projection $p \in M_n(\mathbb{C}) \otimes \mathcal Q$, a *-homomorphism $\psi: \mathcal P \rightarrow p(M_n(\mathbb{C}) \otimes \mathcal Q)p$ and a non-zero partial isometry $v \in 1_{\mathcal P}(M_{1,n}(\mathbb{C}) \otimes M)p$ satisfying $xv=v\psi(x)$ for all $x \in \mathcal P$.
\end{enumerate}
\end{theorem}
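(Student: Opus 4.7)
The strategy is to establish the cycle $(1) \Rightarrow (3) \Rightarrow (2) \Rightarrow (1)$ inside the Jones basic construction $\langle M,e_{\mathcal Q}\rangle$ equipped with its canonical semifinite trace $\mathrm{Tr}$, and then deduce $(1) \Leftrightarrow (4)$ by a standard matrix amplification. The identity that underlies the whole argument is $\|E_{\mathcal Q}(a^* b)\|_2^2 = \mathrm{Tr}(b\, e_{\mathcal Q}\, a^* \cdot a\, e_{\mathcal Q}\, b^*)$, which converts statements about $E_{\mathcal Q}$ into Hilbert-space geometry on $L^2(\langle M,e_{\mathcal Q}\rangle,\mathrm{Tr})$.

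The implication $(1) \Rightarrow (3)$ is direct: given $\theta(x)v = vx$ with $v$ a nonzero partial isometry in $qMp$, one reads off a finite list $(x_i,y_i)$ from the matrix entries of $v$ and $v^*$ so that the intertwining forces $\sum_i \|E_{\mathcal Q}(x_i u y_i)\|_2^2$ to be bounded below uniformly in $u\in\mathcal U(\mathcal P)$. The implication $(3) \Rightarrow (2)$ is immediate, since no tail of a sequence can simultaneously kill a positive lower bound.

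The heart of the proof is $(2) \Rightarrow (1)$. I would form the weak closure $K$ of the convex hull of $\{u\, e_{\mathcal Q}\, u^* : u \in \mathcal G\}$ inside the positive cone of $L^2(\langle M,e_{\mathcal Q}\rangle,\mathrm{Tr})$, and let $a\in K$ be the unique element of minimal $2$-norm. Since $\mathcal G$ acts on $K$ by isometries fixing the minimum, $a$ commutes with every element of $\mathcal G$, and hence with $\mathcal P$ since $\mathcal G''=\mathcal P$. If $a$ were $0$, a diagonal/convexity extraction against a countable dense set of pairs $(x,y)\in M\times M$ would produce a sequence $u_n \in \mathcal G$ with $\|E_{\mathcal Q}(x u_n y)\|_2 \to 0$ for every $x,y$, contradicting $(2)$. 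So $a\neq 0$, and any spectral projection $f$ of $a$ of finite $\mathrm{Tr}$ is a nonzero $\mathcal P$-invariant projection in $\mathcal P' \cap \langle M,e_{\mathcal Q}\rangle$. By the standard correspondence between such finite-trace projections and finitely generated $\mathcal P$-$\mathcal Q$-subbimodules of $L^2(M)$, cutting $f$ down yields the data $\theta$, $p$, $q$, $v$ required in $(1)$. The equivalence $(1) \Leftrightarrow (4)$ is then obtained by amplifying $M$ to $M_n(\mathbb{C})\otimes M$, $\mathcal Q$ to $M_n(\mathbb{C})\otimes\mathcal Q$, and repackaging the intertwining data as in Vaes' appendix.

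The main obstacle I expect is the minimal-element argument itself: one must check that $K$ actually lies in $L^2(\langle M,e_{\mathcal Q}\rangle,\mathrm{Tr})$, that it is weakly compact there, and that the passage from ``there exists a convex combination of $u\,e_{\mathcal Q}\,u^*$ with small $\|\cdot\|_{2,\mathrm{Tr}}$'' to ``there exists a single sequence $u_n\in\mathcal G$ witnessing the negation of $(2)$ uniformly in the test pairs $(x,y)$'' can be executed by one diagonal selection against a countable dense family. All of this rests on the finiteness $\mathrm{Tr}(e_{\mathcal Q}) = \tau(1) = 1$ together with the uniform operator-norm bound $\|u\,e_{\mathcal Q}\,u^*\|\leq 1$, which together keep the relevant convex sets inside a fixed $\mathrm{Tr}$-finite corner of the basic construction.
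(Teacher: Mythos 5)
The paper does not prove this statement: it is recalled verbatim as Popa's intertwining-by-bimodules theorem and attributed to \cite[Theorem 2.1, Corollary 2.3]{Popa06StrongRigidityI} and \cite[Proposition C.1]{vaes07}, so there is no in-paper argument to compare yours against. Judged on its own terms, your outline is the standard one — it is essentially Popa's original proof: the cycle $(1)\Rightarrow(3)\Rightarrow(2)\Rightarrow(1)$, with $(2)\Rightarrow(1)$ handled by taking the unique $\|\cdot\|_{2,\mathrm{Tr}}$-minimal element $a$ of the weakly closed convex hull of $\{u e_{\mathcal Q} u^* : u\in\mathcal G\}$, observing $a\in\mathcal G'\cap\langle M,e_{\mathcal Q}\rangle=\mathcal P'\cap\langle M,e_{\mathcal Q}\rangle$, ruling out $a=0$ by a diagonal extraction against a countable $\|\cdot\|_2$-dense family (this is where separability of $M$ is used), and converting a nonzero finite-trace spectral projection of $a$ into a finitely generated $\mathcal P$-$\mathcal Q$-sub-bimodule of $L^2(M)$, hence into the data of $(1)$. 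The amplification giving $(1)\Leftrightarrow(4)$ is also as in Vaes' appendix. So the architecture is right and complete at the level of a sketch.

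One concrete error: the identity you call the foundation of the argument is misstated. As written, $\mathrm{Tr}(b\,e_{\mathcal Q}\,a^*\cdot a\,e_{\mathcal Q}\,b^*)=\mathrm{Tr}(b\,E_{\mathcal Q}(a^*a)\,e_{\mathcal Q}\,b^*)=\tau(b\,E_{\mathcal Q}(a^*a)\,b^*)$, which already fails for $b=1$ (it gives $\|a\|_2^2$ rather than $\|E_{\mathcal Q}(a)\|_2^2$). The identity you need is
\begin{equation*}
\|E_{\mathcal Q}(a^*b)\|_2^2=\mathrm{Tr}\bigl((a\,e_{\mathcal Q}\,a^*)(b\,e_{\mathcal Q}\,b^*)\bigr),
\end{equation*}
which follows from $e_{\mathcal Q}xe_{\mathcal Q}=E_{\mathcal Q}(x)e_{\mathcal Q}$ and $\mathrm{Tr}(xe_{\mathcal Q}y)=\tau(xy)$; it is this pairing of the two conjugates $a\,e_{\mathcal Q}\,a^*$ and $b\,e_{\mathcal Q}\,b^*$ that lets you read $\|E_{\mathcal Q}(u^*x)\|_2^2$ off of $\mathrm{Tr}\bigl(a\cdot x e_{\mathcal Q}x^*\bigr)$ for $a$ in the convex hull, both in the extraction of a single unitary from a small convex combination and in showing $a\neq 0$ under hypothesis $(2)$. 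With that correction the sketch goes through.
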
 
If one of these equivalent conditions holds, we write $\mathcal P\prec_M\mathcal Q$ and say that a \textit{corner} of $\mathcal P$ embeds in $\mathcal Q$ inside $M$.

\subsubsection{Relative Amenability} A tracial von Neumann algebra $(M,\tau)$ is \textit{amenable} if there exists a $M$-central state $\phi$ on $\mathcal{B}(L^2(M))$ such that $\phi|_M=\tau$. By $M$-central we mean, $\phi(xT)=\phi(Tx)$ for all $x\in M$ and $T \in \mathcal{B}(L^2(M))$. More generally, given a projection $p \in M$, and von Neumann subalgebras $\mathcal P \subset pMp$, $N \subset M$, we say $P$ is \textit{amenable relative} to $N$ inside $M$ if there exists a a $\mathcal P$-central state $\psi$ on $p\langle M, e_N \rangle p$ such that $\psi|_{pMp} = \tau$. When $\mathcal P=M$ and $p=1$, we say that $N$ is \textit{co-amenable} in $M$. If we further assume $N = \mathbb C$, then we recover the definition of amenability for $M$. We will need several other forms of relative amenability from \cite{ozawapopa2010cartanI} throughout this paper, which we state now for the case $p=1$. 

\begin{lemma}[Section 2.2 \cite{ozawapopa2010cartanI}] Let $P$ and $N$ be von Neumann subalgebras of a tracial von Neumanna algebra $(M, \tau)$. Then the following are equivalent:
    \begin{enumerate}
        \item $\mathcal P$ is amenable relative $N$ inside $M$
        \item there exists a $\mathcal P$-central state $\psi$ on $\langle M, e_N \rangle$ such that $\psi$ is normal on $M$ and faithful on $\mathcal Z (\mathcal{P}' \cap M)$;
        \item $_ML^2(M)_{\mathcal P}$ is weakly contained in $_ML^2(M)\otimes_NL^2(M)_{\mathcal P}$. 
    \end{enumerate}
\end{lemma}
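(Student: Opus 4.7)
The plan is to establish the cycle $(1) \Leftrightarrow (3)$ via the Connes-style correspondence between states on the basic construction and bimodules, together with $(1) \Leftrightarrow (2)$ via a density-matrix modification. The equivalence $(1) \Leftrightarrow (3)$ is the conceptual heart, while $(2) \Leftrightarrow (1)$ is a technical strengthening that removes the requirement $\psi|_M = \tau$ in favor of mere normality plus faithfulness on a center.

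For $(1) \Leftrightarrow (3)$, I would use the canonical identification that the closure of $M e_N M$ inside $L^2(\langle M, e_N\rangle, \text{Tr})$ is, as an $M$-$M$ bimodule, naturally isomorphic to $L^2(M) \otimes_N L^2(M)$. Under this identification, a $\mathcal P$-central state $\psi$ on $\langle M, e_N\rangle$ with $\psi|_M = \tau$ corresponds, by the GNS construction, to a $\mathcal P$-central vector structure: approximate $\mathcal P$-central unit vectors in $L^2(M)\otimes_N L^2(M)$ produce a weak-$*$ cluster state on $\langle M, e_N\rangle$ that is $\mathcal P$-central and restricts to $\tau$ on $M$, and conversely, a $\mathcal P$-central state may be approximated by vector states of $\mathcal P$-almost-central vectors via the standard Hahn--Banach / polar decomposition argument (Day's trick). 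Thus the state picture (1) and the weak containment picture (3) are equivalent.

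The implication $(1) \Rightarrow (2)$ is immediate: any state $\psi$ restricting to $\tau$ on $M$ is automatically normal on $M$ and faithful on every subalgebra of $M$, including $\mathcal Z(\mathcal P' \cap M)$. For the nontrivial direction $(2) \Rightarrow (1)$, the plan is to upgrade a $\mathcal P$-central state $\psi$ that is only normal on $M$ to one whose $M$-restriction equals $\tau$. Normality gives $\psi|_M = \tau(h\,\cdot)$ for some positive $h \in L^1(M,\tau)$; the $\mathcal P$-centrality of $\psi|_M$ forces $h$ to commute with $\mathcal P$, so $h \in \mathcal P' \cap M$; and the faithfulness of $\psi$ on $\mathcal Z(\mathcal P' \cap M)$ forces the central support of $h$ to be $1$. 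Cutting by the spectral projections $p_n := 1_{[1/n,n]}(h)$ and rescaling via $x \mapsto \psi(p_n h^{-1/2} x h^{-1/2} p_n)/\psi(p_n)$ produces a net of $\mathcal P$-central functionals whose $M$-restrictions converge to $\tau$; extracting a weak-$*$ cluster point yields the state required in (1).

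The main obstacle is controlling the behavior of $\mathcal P$-centrality under the rescaling-and-limit procedure, since weak-$*$ limits of states need not respect either normality or equivariance in general. This is resolved by the structural observation that every correcting operator appearing in the rescaling --- namely $h$, $h^{-1/2}$ on the spectral interval, and the projections $p_n$ --- lies inside $\mathcal P' \cap M$, so left and right multiplication by these operators commutes with the $\mathcal P$-action and preserves $\mathcal P$-centrality at every stage. The terminal normalization so that $\tilde\psi|_M = \tau$ exactly (rather than merely in the limit) can then be secured by a convex-averaging argument on the net, using convexity of the set of $\mathcal P$-central states with prescribed $M$-restriction.
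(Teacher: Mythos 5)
The paper does not prove this lemma; it quotes it from Section 2.2 of Ozawa--Popa, so your attempt can only be measured against the standard argument there. Your outline of $(1)\Leftrightarrow(3)$ (identifying $\overline{Me_NM}\subset L^2(\langle M,e_N\rangle,\mathrm{Tr})$ with $L^2(M)\otimes_NL^2(M)$ and passing between central states and almost-central, almost-tracial vectors via Day's trick and Powers--St{\o}rmer) and of $(1)\Rightarrow(2)$ is the standard route and is fine.

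There is a genuine gap in $(2)\Rightarrow(1)$. Writing $\psi|_M=\tau(h\,\cdot)$ with $h\in L^1(M)_+$ affiliated with $\mathcal P'\cap M$ is correct, but the spectral cutting $p_n=1_{[1/n,n]}(h)$ and rescaling produce functionals whose restriction to $M$ is $\tau(p_n\,\cdot)/\tau(p_n)$ (note also that the correct normalizing constant is $\psi(p_nh^{-1}p_n)=\tau(p_n)$, not $\psi(p_n)=\tau(hp_n)$, which does not even yield states). These restrictions converge to $\tau(s(h)\,\cdot)/\tau(s(h))$, where $s(h)\in\mathcal P'\cap M$ is the support projection of $h$. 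Faithfulness of $\psi$ on $\mathcal Z(\mathcal P'\cap M)$ only forces the \emph{central} support of $s(h)$ in $\mathcal P'\cap M$ to equal $1$; it does not force $s(h)=1$ (e.g.\ $\mathcal P'\cap M$ could be a matrix algebra and $h$ a rank-one positive element). So whenever $s(h)\neq 1$ your limit state restricts to $\tau(s(h)\,\cdot)/\tau(s(h))\neq\tau$ on $M$, and no convex averaging over your net can repair this, since every functional in the net already annihilates $(1-s(h))M(1-s(h))$. The missing step is a comparison-of-projections argument: since $s(h)$ has central support $1$ in $\mathcal P'\cap M$, choose partial isometries $v_k\in\mathcal P'\cap M$ with $v_k^*v_k\le s(h)$ and $\sum_kv_kv_k^*=1$, and replace the (unnormalized) cluster point $\psi'$, which satisfies $\psi'|_M=\tau(s(h)\,\cdot)$, by $\tilde\psi(x)=\sum_k\psi'(v_k^*xv_k)$. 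Because the $v_k$ lie in $\mathcal P'\cap M$, $\mathcal P$-centrality survives, and $\tilde\psi|_M=\sum_k\tau\big(v_ks(h)v_k^*\,\cdot\big)=\sum_k\tau(v_kv_k^*\,\cdot)=\tau$. With that insertion your argument closes.
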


The following condition for relative amenability that we will need was also established in Section 2.2 of \cite{ozawapopa2010cartanI}, but appears in this form in \cite{ionanaCartan2015}. The idea is that given a net satisfying the conditions in the next lemma, one can construct a $\mathcal P$-central state that is normal on $M$ and faithful on $\mathcal Z (\mathcal{P}' \cap M)$.  

\begin{lemma}[Lemma 2.3 \cite{ionanaCartan2015}]\label{lem:relameSeq}
    Let $\mathcal P$ and $\mathcal Q$ be two von Neumann subalgebras of a tracial von Neumann algebra $(M,\tau)$. Assume that there is a $\mathcal Q$-$M$-bimodule $\mathcal K$ and a net $(\xi_i)_{i \in I}$ of elements $\xi_i$ in a multiple of $\mathcal H$ of $L^2(M) \otimes_Q \mathcal K$ such that:
    \begin{enumerate}
        \item $\limsup_i \|x\xi_i\|_2 \leq \|x \|_2$ for all $x \in M$;
        \item $\limsup_i \|\xi_i \|_2 > 0$;
        \item $\lim_i\|y\xi_i -\xi_i y \|_2=0$ for all $y \in \mathcal P$.
    \end{enumerate}

    Then, there exists a non-zero projection $p' \in \mathcal{Z}(\mathcal P'\cap M)$ such that $\mathcal{P}p'$ is amenable relative to $\mathcal Q$ inside $M$. 
\end{lemma}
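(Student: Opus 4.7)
The plan is to construct, from the net $(\xi_i)$, a $\mathcal P$-central positive functional on the basic construction $\langle M, e_{\mathcal Q}\rangle$ which is normal and non-zero on $M$; applying criterion (2) of the preceding lemma on a suitable central cutdown then yields the desired $p'$.

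The first step is to set up the ambient representation. Since $\mathcal K$ is a $\mathcal Q$-$M$-bimodule and $\langle M, e_{\mathcal Q}\rangle$ is precisely the commutant of the right $\mathcal Q$-action on $L^2(M)$, the multiple $\mathcal H$ of $L^2(M)\otimes_{\mathcal Q}\mathcal K$ carries a natural normal $\ast$-representation $\pi:\langle M, e_{\mathcal Q}\rangle\to\mathcal B(\mathcal H)$ acting on the left $L^2(M)$ tensor factor; crucially, $\pi(\langle M, e_{\mathcal Q}\rangle)$ commutes with the right $M$-action on $\mathcal H$. Define the normal positive functionals $\varphi_i(T):=\langle\pi(T)\xi_i,\xi_i\rangle$ on $\langle M, e_{\mathcal Q}\rangle$.

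By hypothesis (1), the restriction $\varphi_i|_M$ is dominated by $(1+o(1))\tau$, so noncommutative Radon--Nikodym gives $\varphi_i|_M(x)=\tau(h_i x)$ for some $h_i\in M_+$ with $\|h_i\|\leq 1+o(1)$; after passing to a $\sigma$-weak subnet limit $h$ in the $\sigma$-weakly compact ball $\{a\in M : 0\leq a\leq 2\}$ we obtain a normal positive $\psi(x):=\tau(hx)$ on $M$. Simultaneously, Banach--Alaoglu produces a weak$^*$ limit $\varphi\in\langle M, e_{\mathcal Q}\rangle^*$, and after a Takesaki normal--singular decomposition we may assume $\varphi|_M=\psi$. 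The $\mathcal P$-centrality of $\varphi$ is then deduced from (3): for $y\in\mathcal P$ and $T\in\langle M, e_{\mathcal Q}\rangle$, the commutator $\varphi_i(yT)-\varphi_i(Ty)$ unfolds into differences of inner products of the form $\langle\pi(T)\xi_i,y^*\xi_i\rangle$ and $\langle\pi(T)(y\xi_i),\xi_i\rangle$ that vanish in the limit, using $\|y\xi_i-\xi_i y\|_2\to 0$ (with the right-hand $y$ acting through $\mathcal K$) together with the fact that $\pi(T)$ commutes with the right $M$-action. Hypothesis (2) ensures $\psi\neq 0$.

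Finally, the $\mathcal P$-centrality of $\psi$ forces $h\in\mathcal P'\cap M$; let $p'\in\mathcal Z(\mathcal P'\cap M)$ be the non-zero central support of $h$ inside $\mathcal P'\cap M$. The cut-down functional $T\mapsto\varphi(p'Tp')$ is then a $\mathcal P p'$-central state on $p'\langle M, e_{\mathcal Q}\rangle p'$, normal on $p'Mp'$, whose Radon--Nikodym derivative $p'h$ has full central support in $\mathcal Z((\mathcal P p')'\cap p'Mp')=\mathcal Z(\mathcal P'\cap M)p'$; criterion (2) of the preceding lemma, applied inside $p'Mp'$, then yields that $\mathcal P p'$ is amenable relative to $\mathcal Q$ inside $M$. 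The principal obstacle is securing normality of $\varphi$ on $M$ after the weak$^*$ limit: a raw weak$^*$ limit in $\langle M, e_{\mathcal Q}\rangle^*$ may acquire a singular part on $M$, so the crux lies in the Radon--Nikodym / Takesaki decomposition step that isolates a normal, $\mathcal P$-central, non-zero piece.
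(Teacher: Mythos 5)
The paper does not actually prove this lemma --- it cites Ioana's Lemma 2.3 (itself a repackaging of Ozawa--Popa, Section 2.2) and sketches the idea in a single sentence: construct a $\mathcal P$-central state on $\langle M, e_{\mathcal Q}\rangle$ that is normal on $M$ and faithful on $\mathcal Z(\mathcal P'\cap M)$, then invoke the equivalent characterizations of relative amenability. Your proposal is precisely that argument, and its skeleton is sound: the normal representation of $\langle M, e_{\mathcal Q}\rangle=(J\mathcal Q J)'$ on the multiple of $L^2(M)\otimes_{\mathcal Q}\mathcal K$ commuting with the right $M$-action, the states $\varphi_i=\langle\,\pi(\cdot)\xi_i,\xi_i\rangle$, $\mathcal P$-centrality of the weak$^*$ cluster point via condition (3) and boundedness of $\|\xi_i\|$, non-vanishing via (2), and the cut-down by the central support $p'$ of $h$ in $\mathcal P'\cap M$ (which is exactly what makes $\tau(h\,\cdot)$ faithful on $\mathcal Z(\mathcal P'\cap M)p'$) are all correct.

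One correction: the step you single out as the crux --- the Takesaki normal/singular decomposition --- is both unnecessary and, as written, not justified. Condition (1) passes to the limit to give $\varphi(x^*x)\leq\tau(x^*x)$ for all $x\in M$, so $\varphi|_M\leq\tau$ and is therefore automatically normal; it coincides with $\tau(h\,\cdot)$ along a common convergent subnet, with no decomposition needed. Worse, performing the decomposition on all of $\langle M, e_{\mathcal Q}\rangle$ and keeping the normal part could destroy the functional entirely (a singular functional on the basic construction can restrict to a nonzero normal one on $M$), while decomposing only $\varphi|_M$ does not let you ``assume'' $\varphi|_M=\psi$, since you cannot subtract the singular part of the restriction from $\varphi$ while preserving positivity on the larger algebra. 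The definition of relative amenability only requires normality of the restriction to $M$, which condition (1) hands you for free.
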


\subsection{Cloning Systems}\label{section:CS}

In 1965 the Thompson Groups, $F, T,$ and $V$ were introduced by Richard Thompson. Successive subgroups of each other and countably discrete, $V$ is the group of piecewise linear bijections from the unit interval to itself, that map dyadic rationals to dyadic rationals. The elements are differentiable except at finitely many dyadic rationals and when the derivative does exist, it is a power of $2.$ An element of $T$ can be viewed as a piecewise linear homeomorphism, of $S^1$ after identifying the endpoints of $[0,1].$ An element of $F$, the most restrictive of Thompson's groups, must strictly be a piecewise linear homeomorphism on $[0,1].$  Cannon, Floyd, and Parry give a fundamental introduction to the Thompson Groups in \cite{cannon1996introductory} and we recall the generators of $F, T$ and $V$ below. 

The group $F$ is generated by the following elements $A$ and $B$, 

\begin{equation}\label{eq:Fgen}  A(x) = \begin{cases}  \frac{x}{2}, & 0 \leq x < \frac{1}{2} \\
    x-\frac{1}{4}, & \frac{1}{2} \leq x < \frac{3}{4} \\
    2x-1, & \frac{3}{4} \leq x < 1 
    \end{cases}  \quad
    B(x) = \begin{cases}  x, & 0 \leq x < \frac{1}{2} \\
    \frac{x}{2} + \frac{1}{4}, & \frac{1}{2} \leq x < \frac{3}{4} \\
    x-\frac{1}{8}, & \frac{3}{4} \leq x < \frac{7}{8} \\
    2x-1, & \frac{7}{8} \leq x < 1
    \end{cases}, 
\end{equation}
while $T$ is generated by $A,B,$ and $C$ and $V$ is generated by $A, B, C$ and $\pi_0,$ 

\begin{equation}\label{eq:Vgen}  C(x) = \begin{cases}  \frac{x}{2} + \frac{3}{4}, & 0 \leq x  < \frac{1}{2} \\
    2x- 1, & \frac{1}{2} \leq x < \frac{3}{4} \\
    x-\frac{1}{4}, & \frac{3}{4} \leq x < 1 
    \end{cases}  \quad
    \pi_0(x) =  \begin{cases}  \frac{x}{2} + \frac{1}{2}, & 0 \leq x  < \frac{1}{2} \\
    2x- 1, & \frac{1}{2} \leq x < \frac{3}{4} \\
    x, & \frac{3}{4} \leq x < 1 
    \end{cases}. 
\end{equation}

Early on Higman showed $T$ and $V$ are non-amenable in \cite{higman1974finitely}, but it is a persistent open problem to decide whether or not $F$ is amenable.  In \cite{HO2016} Haagerup and Olesen went a step further and showed $T$ and $V$ are non-inner amenable. On the other hand, Jolissaint showed $F$ to be inner amenable in \cite{jolissaint1997inner} and its group von Neumann algebra $L(F)$ to be McDuff in \cite{jolissaint1998central}.

There have been many generalizations of $F \subset T \subset V$ over the years, including Higmans's description of $V_d$, where he replaces dyadic rationals with $d$-adic rationals for any integer $d \geq 2$ in \cite{higman1974finitely}. Brown was technically the first to study the $d$-adic version of $F$ and $T$ in \cite{brown1987finiteness}. However, he wrote that it was ``the simply obvious generalization'' of Higman's work on $V_d$. Thus, the family of groups $F_d \subset T_d \subset V_d$ are now collectively referred to as the Higman-Thompson groups.

Our focus now is to provide the framework for a general $d$-ary cloning system as given by Skipper and Zaremsky in \cite{skipper2021almost}, building from the $d=2$ case that Witzel and Zaremsky developed in \cite{witzel2018thompson}.  These approaches are built off of Brin's treatment of the ``braided Thompson group $V$,'' or $bV,$ in \cite{brin2007algebra}, and the generalization of $bV$ to $bV_d$ given by Aroca and Cumplido \cite{BraidedVdAA}. We note $bV$ was also described independently by Dehornoy in \cite{dehornoy2006group}.  The starting point is viewing $F_d, T_d$ and $V_d$ as maps between $d$-ary trees. It is from this perspective that Thompson-like groups from cloning systems arise.

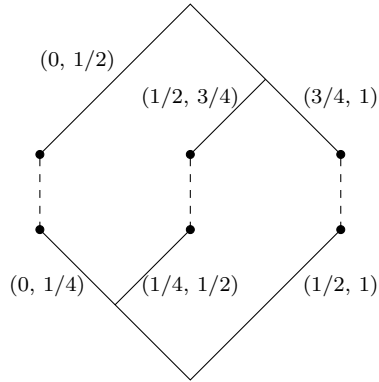
\begin{figure}
    \centering
    \begin{tikzpicture}
\draw (0,0) --  (2,2) -- (4,0)  (3,1) -- (2,0);
\node at (.5,1.25) {\tiny (0, 1/2)};
\node at (2,.75) {\tiny (1/2, 3/4)};
\node at (4,.75) {\tiny (3/4, 1)};
    \filldraw (0,0) circle (1.5pt);
    \filldraw (2,0) circle (1.5pt);
    \filldraw (4,0) circle (1.5pt);
    \draw[dashed] (0,-1) -- (0,0);
    \draw[dashed] (2,-1) -- (2,0);
    \draw[dashed] (4,-1) -- (4,0);
\draw (0,-1) -- (2,-3) -- (4, -1) (1,-2) -- (2, -1);
\node at (.1,-1.75) {\tiny (0, 1/4)};
\node at (2,-1.75) {\tiny (1/4, 1/2)};
\node at (4,-1.75) {\tiny (1/2, 1)};
    \filldraw (0,-1) circle (1.5pt);
    \filldraw (2,-1) circle (1.5pt);
    \filldraw (4,-1) circle (1.5pt);
\end{tikzpicture}
    \caption{Above is one of the generators, $A(x),$ of $F$ from equation \ref{eq:Fgen}. The binary trees represent how the unit interval is mapped homeomorphically by elements of $F.$}
    \label{fig:gen_F}
\end{figure}    

Figure \ref{fig:gen_F} depicts $A(x)$ as a map between binary trees indicating where each branch represents a subdivision of the unit interval.  Figure \ref{fig:Vgen} (b) depicts the element $\pi_0.$ The top tree is labeled $T$ and the bottom tree is labeled $U$ and a permutation is now necessary to describe how the trees are mapped to each other. Thus, an element of $V$ is represented by the triple $(T, \sigma, U).$ However, there is not a unique triple representing elements of $F$ or $V$ from this perspective, (see Figure \ref{fig:Vgen} (a) and (c) for two other triples representing $\pi_0$) so an equivalence relation must be defined. Intuitively, two triples will be in the same equivalence class when one element can be obtained by adding finitely many $d$-ary carets to the end of the trees (cloning) in a way compatible with the permutation, in the sense that no new piecewise breaks are introduced that would alter the original function being represented. The exact equivalence class and group operation are given after definition \ref{def:cloning}. 

A Thompson-like group from a $d$-ary cloning system comes from taking a general sequence of groups $G_n$, that map into $S_n$ in a compatible way. We now define a $d$-ary cloning system as given in \cite{skipper2021almost}. The 2-ary version was defined first in \cite{witzel2018thompson}.


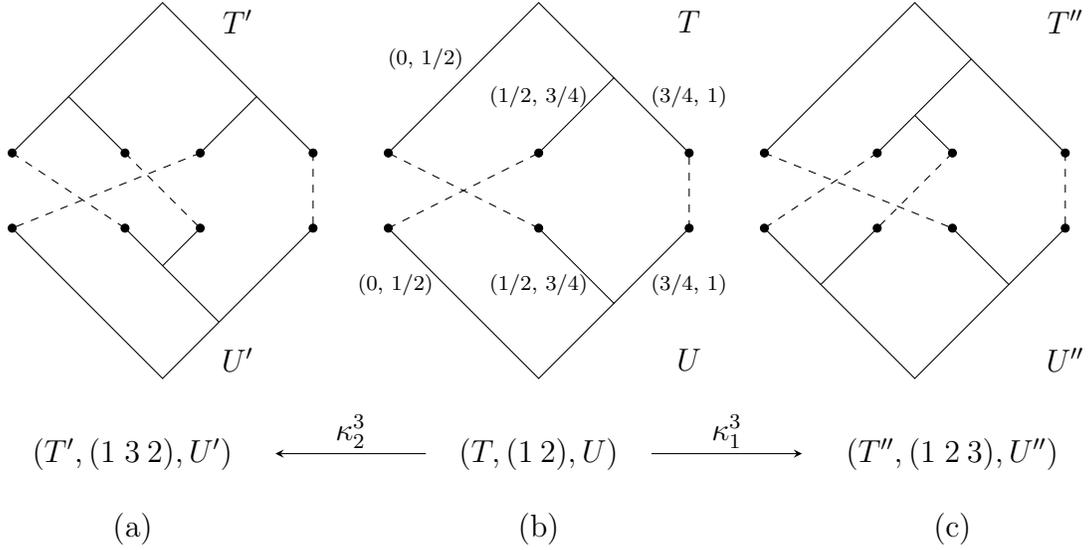
\begin{figure}
    \begin{tikzpicture}   
\draw (0,0) --  (2,2) -- (4,0)  (3,1) -- (2,0);
\node at (.5,1.25) {\tiny (0, 1/2)};
\node at (2,.75) {\tiny (1/2, 3/4)};
\node at (4,.75) {\tiny (3/4, 1)};
    \filldraw (0,0) circle (1.5pt);
    \filldraw (2,0) circle (1.5pt);
    \filldraw (4,0) circle (1.5pt);
\draw[dashed] (2,-1) -- (0,0);
\draw[dashed] (0,-1) -- (2,0);
\draw[dashed] (4,-1) -- (4,0);    
    \filldraw (0,-1) circle (1.5pt);
    \filldraw (2,-1) circle (1.5pt);
    \filldraw (4,-1) circle (1.5pt);
\draw (0,-1) -- (2,-3) -- (4, -1) (3,-2) -- (2, -1);

\node at (.1,-1.75) {\tiny (0, 1/2)};
\node at (2,-1.75) {\tiny (1/2, 3/4)};
\node at (4,-1.75) {\tiny (3/4, 1)};


\draw (5,0) -- (7,2) -- (9,0);
\draw (7.75, 1.25) -- (6.5,0);
\draw (7.5,0) -- (7,0.5);

\filldraw (5,0) circle (1.5pt);
\filldraw (6.5,0) circle (1.5pt);
\filldraw (7.5,0) circle (1.5pt);
\filldraw (9,0) circle (1.5pt);

\draw[dashed] (6.5,0) -- (5,-1);
\draw[dashed] (5,0) -- (7.5,-1);
\draw[dashed] (7.5,0) -- (6.5,-1);
\draw[dashed] (9,0) -- (9,-1);

\filldraw (5,-1) circle (1.5pt);
\filldraw (6.5,-1) circle (1.5pt);
\filldraw (7.5,-1) circle (1.5pt);
\filldraw (9,-1) circle (1.5pt);

\draw (5,-1) -- (7,-3) -- (9,-1);
\draw (6.5,-1) -- (5.75,-1.75);
\draw (7.5,-1) -- (8.25,-1.75);


\draw (-1,0) -- (-3,2) -- (-5,0);
\draw (-2.5,0) -- (-1.75,0.75);
\draw (-3.5,0) -- (-4.25,0.75);

\filldraw (-1,0) circle (1.5pt);
\filldraw (-2.5,0) circle (1.5pt);
\filldraw (-3.5,0) circle (1.5pt);
\filldraw (-5,0) circle (1.5pt);

\draw[dashed] (-5,0) -- (-3.5,-1);
\draw[dashed] (-3.5,0) -- (-2.5,-1);
\draw[dashed] (-2.5,0) -- (-5,-1);
\draw[dashed] (-1,0) -- (-1,-1);

\filldraw (-1,-1) circle (1.5pt);
\filldraw (-2.5,-1) circle (1.5pt);
\filldraw (-3.5,-1) circle (1.5pt);
\filldraw (-5,-1) circle (1.5pt);

\draw (-1,-1) -- (-3,-3) -- (-5,-1);
\draw (-3.5,-1) -- (-2.25,-2.25);
\draw (-2.5,-1) -- (-3,-1.5);


\node at (4,1.75) {$T$};
\node at (4,-2.75) {$U$};
\node at (9,-2.75) {$U''$};
\node at (9,1.75) {$T''$};
\node at (-2,1.75) {$T'$};
\node at (-2,-2.75) {$U'$};
\node at (2,-4) {$(T, (1 \: 2), U)$};
\node at (-3.4,-4) {$(T', (1 \; 3 \: 2), U')$};
\node at (7.5,-4) {$(T'', (1 \; 2 \: 3), U'')$};
\node at (2, -5) {(b)};
\node at (-3.4, -5) {(a)};
\node at (7.5, -5) {(c)};
\draw[-stealth] (3.5, -4) -- (5.5, -4);
\draw[stealth-] (-1.5, -4) -- (.5, -4);
\node at (4.5, -3.7) {$\kappa_1^3$};
\node at (-0.5, -3.7) {$\kappa_2^3$};

\end{tikzpicture}
    \caption{In (b), $\pi_0$, one of the four generators of $V$ from equation \ref{eq:Vgen} is depicted. Picture (a) and (c) are two different expansions of  $\pi_0.$ All three triples are different representations of the same element, $[T, (1 \; 2), U]$. To determine the permutations, the leaves are labeled left to right and the mapping reads from the bottom tree to the top tree.}
    \label{fig:Vgen}
\end{figure}


\begin{definition}[\cite{skipper2021almost} $d$-ary cloning system]\label{def:cloning}
 Let $(G_n)_{n \in \mathbb{N}}$ be a sequence of groups and $d \geq 2,$ be an integer.  For each $n \in \mathbb{N},$ let $\rho_n: G_n \rightarrow S_n$ be a homomorphism to the symmetric groups. For each $1 \leq k \leq n,$ let $\kappa_k^n: G_n \rightarrow G_{n+d-1}$ be injective, but not necessarily a homomorphism. A $\kappa_k^n$ is called a $d$-ary cloning map.  Following the literature, write $\rho_n$ to the left of its input and $\kappa_k^n$ to the right of its input. Then the triple 
\[((G_n)_{n \in \mathbb{N}}, (\rho_n)_{n \in \mathbb{N}}, (\kappa_k^n)_{k \leq n} ))\]
is a \textit{d-ary cloning system} if it satisfies the following axioms:
\begin{enumerate}[(C1):]
    \item (Cloning a product) $(gh)\kappa^n_k = (g)\kappa^n_{\rho_n(h)k}(h)\kappa^n_k$
    \item (Product of clonings) $\kappa^n_l \circ \kappa^{n+d-1}_k = \kappa^n_k \circ \kappa^{n+d-1}_{l+d-1}$
    \item (Compatibility) $\rho_{n+d-1}((g)\kappa^n_k)(i) = (\rho_n(g))\zeta^n_k(i)$ for all $i \neq k, k+1, \dots k+d-1$
\end{enumerate}
We always have $1 \leq k < l \leq n$ and $g,h \in G_n$ and $\zeta^n_k$ denotes the standard $d$-ary cloning maps for the symmetric groups, explained in more detail in \cite[Example 2.2.]{skipper2021almost},  and defined below in example \ref{ex:symm} for completeness. 
\end{definition}

We can now describe the group structure on the Thompson-Like groups $\mathscr{T}_d(G_*)$, which as Bashwinger puts it, can be viewed as a ``Thompson-esque limit'' of a sequence of groups, in contrast to a direct limit. 

Let $T$ and $U$ be $d$-ary trees, both with $n$ leaves and $g$ an element of $G_n.$ The triple $(T, g, U)$ represents an element of $\mathscr{T}_d(G_*).$ To define the equivalence classes for these triples, we need an \textit{expansion} of $(T, g, U),$ which is a triple $(T', (g)\kappa_k^n, U')$ (See figure \ref{fig:Vgen}). Adding a $d$-ary caret to the k$^{th}$ leaf of $U$ produces $U'$ while $T'$ is the result of adding a $d$-ary caret to the $\rho_n(g)(k)$-th leaf. The opposite of an expansion is called a \textit{reduction}.  Inductively extend expansions and reductions to be any finite iteration of either respective process. If one triple $(T, g, U)$ can be obtained from another triple via a finite sequence of expansions and reductions they are in the same equivalence class.  Thus, the elements of $\mathscr{T}_d(G_*)$ are the equivalence classes $[T, g, U].$

The group operation can only be carried out by choosing representatives $[T, g, U]$ and $[V, h, W]$ where $U=V.$  This is possible because two trees can always be expanded into each other by adding carets to the mismatched leaves until they agree.  Then the group operation is similar to the composition of elements in the braid groups, in the sense that the elements are stacked on top of each other and composed.
\[[T, g, U][U, h, W] := [T, gh, W]\]
The identity element comes from two copies of any $d$-ary tree, $[T, 1, T].$  The inverse operation is the natural inversion of the pictures and is given by 
\[[T, g, U]^{-1} = [U, g^{-1}, T].\]
We encourage reading the foundational and detailed work on cloning systems and Thompson-Like groups in \cite{UserGuideCloningZar},\cite{witzel2018thompson}, \cite{skipper2021almost},  \cite{bashwinger2021neumann}, and \cite{bashwinger2023neumann}. 

\subsubsection{Examples of Cloning Systems}

A powerful aspect of cloning systems is the unifying framework it provides for several natural generalizations of Thompson's groups that were introduced independently over the years. In addition to braided $V$ mentioned earlier, braided $F$ first introduced by Brady, Burillo, Clearly, and Stein \cite{brady2008pure} and the R\"{o}ver-Nekrashevych groups, which originated in the work of Nekrashevych in \cite{nekrashevych2004cuntz} and R\"{o}ver in \cite{rover1999constructing} can be described from a cloning system perspective. Throughout this paper, we review several of the examples from \cite{bashwinger2021neumann} because many of these same examples will satisfy Theorem \ref{MainThrm:McDuff}. More details on the group theoretic properties of the examples can be found in \cite{witzel2018thompson} and \cite{skipper2021almost}. 

\begin{example}\label{ex:symm}
To recover $V_d$ from a cloning system, take $(G_n)_{n \in \mathbb{N}} = (S_n)_{n \in \mathbb{N}},$ $\rho_n$ to be the identity maps at each level, and the cloning maps to be the standard symmetric group cloning maps needed in the (C3), $\kappa_k^n = \zeta_k^n: S_n \rightarrow S_{n+d-1}$, which we now define.

\[ 
((\sigma)\zeta^n_k)(i) := \begin{cases} \sigma(i) & \text{if} \: i \leq k \: \text{and} \: \sigma (i) \leq \sigma (k) \\
\sigma(i)+d-1 & \text{if} \: i < k \: \text{and} \: \sigma (i) > \sigma (k) \\ 
\sigma(i-d+1) & \text{if} \: i > k+d-1 \: \text{and} \: \sigma (i-d+1) < \sigma (k) \\
\sigma(i-d+1)+d-1 & \text{if} \: i \geq k+d-1 \: \text{and} \: \sigma (i-d+1) \geq \sigma (k) \\
\sigma (k) +i -k &  \text{if} \: k < i < k+d-1
\end{cases}
\]
See \cite{skipper2021almost} Example 2.2 for the computations showing $\zeta^n_k$ satisfies the cloning system axioms in the $d=2$ case. To recover $F_d,$ take $G_n = \{1\}$ for all $n,$ which forces $\rho_n$ and $\kappa^n_k$ to be trivial too. Then in the notation of cloning systems, $F_d = \mathscr{T}_d(\{1\}) < V_d = \mathscr{T}_d(S_*).$ For $T_d$ sitting as a subgroup of $V_d$, the elements $[T, \sigma, U]$ are the restriction of $\sigma \in S_n$ to the cyclic subgroup $\langle (1 2 \dots n) \rangle < S_n.$ 

\end{example}

\begin{example}
    The R\"{o}ver-Nekrashevych groups were among the concrete examples of groups arising from  $d$-ary cloning systems. Generally they are thought of as a certain subgroup of the almost-automorphisms of a $d$-ary tree, but from the cloning system perspective, the family of groups that generate them comes from wreath products between permutation groups and automorphisms of trees. As these groups are outside the scope of this paper, we direct the interested reader to \cite[Section 2.2]{skipper2021almost} for explicit details of their construction. 
    In \cite{bashwinger2021neumann} it is shown directly that the R\"{o}ver-Nekrashevych group is ICC and gives rise to a type II$_1$ factor. However, it is suspected by Bashwinger and Zaremsky that the groups are non-inner amenable. For this paper, the interest in the R\"{o}ver-Nekrashevych group lies in them being ICC counter-examples to several parts of definition \ref{def:csProp} below. 
\end{example}

Before discussing more examples, we introduce several definitions that were used in \cite{bashwinger2021neumann} to investigate the group von Neumann algebra $L(\mathscr{T}_d(G_*))$ and which we will use in Section \ref{sec:SGA} to provide information on the group measure space von Neumann algebra of $\mathscr{T}_d(G_*).$

\subsubsection{von Neumann Algebras of Thompson-Like Groups}

Jolissant proved $F$ was inner amenable in \cite{jolissaint1997inner} and then that $L(F)$ is a McDuff II$_1$ factor in \cite{jolissaint1998central}, marking the first von Neumann algebraic results for Thompson's groups. 

Bashwinger and Zaremsky, and Bashwinger on his own more recently studied, the von Neumann algebras of Thompson-Like groups. Their approach leads to four conditions which are sufficient for $L(\mathscr{T}_d(G_*))$ to be McDuff II$_1$ factor.  In this article, we use three of these conditions so we provide the definitions and brief motivation. We direct the reader to \cite{bashwinger2021neumann} and \cite{bashwinger2023neumann} for more detailed explanations.

\begin{definition}\label{def:csProp}
A $d$-ary cloning system $((G_n)_{n \in \mathbb{N}}, (\rho_n)_{n \in \mathbb{N}}, (\kappa_k^n)_{k \leq n} ))$ is:
\begin{enumerate}
    \item \textit{Fully Compatible} if $\rho_{n+d-1}((g)\kappa^n_k)(i) = (\rho_n(g))\zeta^n_k(i)$ for all $1 \leq k \leq n$ and all $1 \leq i \leq n+d-1,$ including $i = k, k+1, \dots, k+d-1.$ 
    \item \textit{Pure} if $\rho_n(g)$ is the identity permutation on $\{1, \dots, n \}$ for all $g \in G_n$ and all $n \in \mathbb{N}.$ In other words, all $\rho_n$ are the trivial homomorphism.
    \item \textit{Slightly pure} if $\rho_n(g)(n) = n$ for all $n \in \mathbb{N}$ and $g \in G_n.$
    \item \textit{Uniform} if 
    \[ \kappa^n_k \circ \kappa^{n+d-1}_\ell = \kappa^n_k \circ \kappa^{n+d-1}_{\ell'} \]
    for all $1 \leq k \leq n$ and $\ell, \ell'$ satisfying $k \leq \ell \leq \ell' \leq k+d-1.$ 
\end{enumerate}
\end{definition}

Fully Compatible is a strengthening of $(C3)$ in Definition \ref{def:cloning}. In general, cloning a group element and then representing it in $S_n$ might not agree with mapping first into $S_n$ and then cloning with the standard symmetric group cloning map $\zeta^n_k$, when being applied to leaves higher than the location of the cloning. However, fully compatibility is a natural assumption and holds in most of the examples that were previously studied, except for some R\"{o}ver-Nekrashevych groups. 

Fully compatible cloning systems also are guaranteed to have an important subgroup $\mathscr{K}_d(G_*) \subset \mathscr{T}_d(G_*),$ which we now define.  In a fully compatible $d$-ary cloning system, if $g \in \ker (\rho_n)$ then $(g)\kappa^n_k \in \ker (\rho_{n+d-1})$ for any $1 \leq k \leq n.$  Therefore, when a triple is of the form $(T, g, T)$ for $g \in \ker (\rho_{n(T)}),$ and $n(T)$ the number of leaves of $T,$ then any expansion of the triple is of the same form. Thus, we can define the subgroup to be

\begin{equation}
    \mathscr{K}_d(G_*) := \{[T,g,T] : g \in \ker (\rho_{n(T)}) \}.
\end{equation}

Slightly pure is a powerful, albeit severe restriction. Once a group element is represented in $S_n,$ it is only allowed to permute the first $1, \dots, n-1$ leaves. In particular, the right-most leaf must remain fixed. Stronger still, but more natural, the pure condition is when the image of group elements under $\rho_n$ act trivially, so no leaves of the trees are permuted. As the name suggests, pure implies slightly pure, and $F_d$ is the natural example of a pure cloning system. The group $V_d$ notably fails both conditions and highlights the dichotomy between pure, or ``$F$-like'' groups and not slightly pure, or ``$V$-like'' groups.

The uniform condition is needed to get asymptotic commutativity inside $\mathscr{T}_d(G_*)$. Intuitively, it tells us that after cloning a leaf by adding a $d$-ary caret, carrying out a second cloning does not depend on which newly added leaf is cloned.

Let us now recall Bashwinger and Zaremsky's use of fully compatible and slightly pure to decompose $\mathscr{T}_d(G_*)$ as a semidirect product. In particular, the next lemma gives a useful surjection from fully compatible cloning systems onto $V_d$ and is needed to prove Theorem \ref{MainThrm:McDuff}. 

\begin{lemma}\cite[Lemma 3.2]{witzel2018thompson}\label{lem:surj} Given a fully compatible $d$-ary cloning system, there is a map $\pi:\mathscr{T}_d(G_*) \rightarrow V_d$ given by sending $[T, g, U]$ to $[T, \rho_n(g), U],$ where $g \in G_n.$ The kernel is $\mathscr{K}_d(G_*)$ and the image is a group $W_d$ with the property $F_d \leq W_d \leq V_d.$

\end{lemma}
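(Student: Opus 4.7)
The plan is a four-part verification: well-definedness of $\pi$ on equivalence classes of triples, the homomorphism property, identification of $\ker\pi = \mathscr{K}_d(G_*)$, and the containment $F_d \leq \pi(\mathscr{T}_d(G_*))$. Only the first step uses a hypothesis substantively; the remaining three are formal once well-definedness is in hand.

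For well-definedness, since the equivalence relation on triples is generated by single expansions (and their inverses), it suffices to check that $\pi$ commutes with one expansion. An expansion at the $k$-th leaf replaces $(T, g, U)$ by $(T', (g)\kappa_k^n, U')$, where $U'$ adjoins a $d$-ary caret at leaf $k$ of $U$ and $T'$ adjoins one at leaf $\rho_n(g)(k)$ of $T$. On the $V_d$ side, using the symmetric-group cloning system of Example \ref{ex:symm}, the analogous expansion of $[T, \rho_n(g), U]$ yields $[T', (\rho_n(g))\zeta_k^n, U']$. Thus well-definedness reduces to the equality of permutations $\rho_{n+d-1}((g)\kappa_k^n) = (\rho_n(g))\zeta_k^n$ in $S_{n+d-1}$. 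Axiom $(C3)$ of Definition \ref{def:cloning} gives this equality for $i \notin \{k,\dots,k+d-1\}$, and full compatibility is by definition precisely the upgrade of $(C3)$ to all indices. This is the main, and essentially only, obstacle; without full compatibility, the map on triples descends only partially.

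The homomorphism property is then immediate. Given $[T, g, U]$ and $[V, h, W]$, choose a common expansion so that the middle trees agree, producing representatives $[T^\ast, g^\ast, X]$ and $[X, h^\ast, W^\ast]$ whose product is $[T^\ast, g^\ast h^\ast, W^\ast]$. Applying $\pi$ and using that $\rho_{n^\ast}\colon G_{n^\ast}\to S_{n^\ast}$ is a group homomorphism (part of the cloning system data) gives $[T^\ast, \rho_{n^\ast}(g^\ast)\rho_{n^\ast}(h^\ast), W^\ast]$, which factors as $\pi([T,g,U])\,\pi([V,h,W])$ in $V_d$. The identity and inverse formulas are respected by inspection.

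For the kernel, suppose $\pi([T,g,U]) = e$ in $V_d$, so $[T,\rho_n(g),U] = [X,\mathrm{id},X]$ for some $X$. A common expansion of the two triples exists, and since a direct computation shows $(\mathrm{id})\zeta_k^n = \mathrm{id}$, expansions of $[X,\mathrm{id},X]$ retain the diagonal form $[Y,\mathrm{id},Y]$. Matching this expansion on the $\mathscr{T}_d(G_*)$ side, the well-definedness step produces a representative $[Y, g', Y]$ with $\rho_{n(Y)}(g') = \mathrm{id}$, placing the original class in $\mathscr{K}_d(G_*)$. The reverse inclusion is immediate since any $[T,g,T]$ with $g\in\ker\rho_{n(T)}$ maps to $[T,\mathrm{id},T] = e$. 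Finally, $W_d \leq V_d$ by construction, and every element of $F_d\leq V_d$ has the form $[T,\mathrm{id},U]$; since $\rho_n(e_{G_n}) = e_{S_n}$ for any cloning system, such an element equals $\pi([T, e_{G_n}, U])$, yielding $F_d \leq W_d \leq V_d$.
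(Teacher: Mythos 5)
Your argument is correct, and it is the standard one: full compatibility is exactly what makes the triple-level map commute with expansions, and the kernel and image identifications then follow formally from the common-expansion description of the equivalence relation. Note that the paper itself gives no proof of this lemma --- it is quoted directly from \cite[Lemma 3.2]{witzel2018thompson} --- so there is nothing to compare against beyond observing that your reconstruction matches the argument given there.
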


Another tool we will need to prove Theorem \ref{MainThrm:McDuff} is a subgroup and semi-direct product decomposition from \cite{bashwinger2021neumann} which we include for completeness.  The subgroup of interest is $\hat V_d < V_d.$  This subgroup arises naturally during the cloning system construction that gives rise to $\mathscr{T}_d(S_*) = V_d,$ by restricting to $\hat S_n \leq S_n,$ where $\hat S_n$ is the subgroup that fixes $n,$ meaning $\hat S_n \cong S_{n-1}.$  From here, it can be seen that the standard cloning system on $(S_n)_{n \in \mathbb{N}}$ restricts to a slightly pure cloning system on $(\hat S_n)_{n \in \mathbb{N}}.$  Thus, $\mathscr{T}_d(\hat S_*) = \hat V_d$ is the subgroup of $V_d$ with elements $[T, \sigma, U],$ where $\sigma$ does not permute the right most leaf. With that in mind, there is a well defined homomorphism 
\[ \theta : \hat V_d \rightarrow \mathbb{Z} \]
sending $[T, \sigma, U]$ to $\delta_r(T)-\delta_r(U),$ where $\delta_r(T)$ is a positive integer corresponding to how many branches are connected to the right-most side of the tree. In other words, $\delta_r(T)$ can be thought of as the distance from the root to the right-most leaf. 
When a cloning system is fully compatible, we can use lemma \ref{lem:surj} to construct the subgroup needed for the semidirect product. Let $D_d \leq \hat V_d$ be the kernel of the homomorphism, $\theta$ defined above, and define 
\begin{equation}
    D_d(G_*) := \pi^{-1}(D_d).
\end{equation}\label{Dgroup}
Then $D_d(G_*)$ is the subgroup of elements $[T, g, U]$ such that $\delta_r (T) = \delta_r (U).$ We also have that $[F_d,F_d] \leq \pi(D_d(G_*)) \leq D_d.$

We can now write $\mathscr{T}_d(G_*) \cong D_d(G_*) \rtimes \mathbb{Z},$ where the map $\mathscr{T}_d(G_*) \to \mathbb{Z}$ is the composition of $\pi: \mathscr{T}_d(G_*) \to \hat V_d$ with $\theta : \hat V_d \to \mathbb{Z}.$ In particular, we have the split exact sequence 
\[1 \rightarrow D_d(G_*) \rightarrow D_d(G_*) \rtimes \mathbb{Z} \rightarrow \mathbb{Z} \rightarrow 1.\]

We now provide several more examples of cloning systems and indicate which properties they have. 

\begin{example}\label{ex:braid}

To construct $bV_d,$ from a $d$-ary cloning system, we start with the family of braid groups $(B_n)_{n \in \mathbb{N}}.$  An element $b \in B_n$ is oriented by labeling the strands from left to right. Then the cloning maps, $\kappa_k^n$ send $b \in B_n$ to the element in $B_{n+d-1}$, where the $k^{th}$ strand is replaced by $d$ parallel strands.  The maps $\rho_n: B_n \rightarrow S_n$ are the projections onto the symmetric groups that remember where the strand ends, but not the order of the braiding. From this perspective, we then have $bV_d = \mathscr{T}_d(B_*).$ The same maps, together with the restriction to the family of pure braid groups, $(PB_n)_{n \in \mathbb{N}}$, give rise to braided $F_d,$ written as $bF_d =\mathscr{T}_d(PB_*).$ These $d$-ary braided Higman-Thompson groups were first studied by Aroca and Cumplido in \cite{aroca2022new} and by Skipper and Wu in \cite{skipper2021finiteness}, where both papers consider more general constructions using wreath products. In \cite{bashwinger2021neumann}, Bashwinger and Zaremsky show 
how the cloning system $bF_d$ arises from is pure and uniform, while the cloning system $bV_d$ arises from is not even slightly pure. 

\end{example}

\begin{example}\label{ex:matrix}
    Two examples of families of upper triangular matrix groups lead to Thompson-like groups. They were first introduced for $d=2$ in \cite{witzel2018thompson}, then generalized to all $d$ in \cite{bashwinger2021neumann}, where the group von Neumann algebras were studied.  Let $(B_n(R))_n$ be the family of $n$-by-$n$ invertible upper triangular matrices over a countable and commutative ring $R.$ The $k$th cloning map embeds an $n$-by-$n$ matrix into a $n+d-1$-by-$n+d-1$ matrix by duplicating the $a_{k,k}$ entry $d-1$ times along the diagonal, where the newly introduced columns duplicate the $k$th column entries and introduce $0$s to keep the new matrix upper triangular. Section 4.3 of \cite{bashwinger2021neumann} contains an example of such a cloning.  Modding out by the center, $\bar{B}_n(R) := B_n(R)/Z_n(R),$ provides a family of groups such that $\mathscr{T}_d(\bar{B}_n(R) )$ is ICC. Abels' groups, $Ab_n$, first studied in \cite{abels1987finiteness} are a special case when $R = \mathbb{Z}[1/p]$ and the entries $a_{1,1}$ and $a_{n+1, n+1}$ are both 1. Thus, $Ab_n$ is a subgroup of $B_{n+1}(\mathbb{Z}[1/p])$. 
    Bashwinger and Zaremsky also point out how both of these cloning systems are pure and uniform.
\end{example}

\begin{example}\label{ex:endo}
Another example that was introduced by Tanushevski in \cite{tanushevski2016new}  and generalized by Bashwinger and Zaremsky to the $d > 2$ case in \cite{bashwinger2021neumann} comes from taking $d$ injective endomorphisms $\phi_i, 1\leq i \leq d,$ of a countable group, $G$. The family of groups needed for the cloning system is then the $n$-fold direct product of $G,$  $(\Pi^n(G))_n.$  The cloning maps, $\kappa_k^n: \Pi^n(G) \rightarrow \Pi^{n+d-1}(G)$ are defined by 
\[(g_1, \dots, g_n)\kappa^n_k = (g_1, \dots, g_{k-1}, \phi_1(g_k), \dots, \phi_d(g_k), g_{k+1}, \dots, g_n). \]
If $G$ is ICC, then $\mathscr{T}_d(\Pi^n(G)_*)$ also will be ICC. However, if $G$ is not ICC, $\mathscr{T}_d(\Pi^n(G)_*)$ will no longer be either, without adding an additional assumption. On the other hand, restricting to $\Psi^n(G) := \{1\} \times \Pi^{n-1}(G) \leq \Pi^n(G),$ we get another example, first introduced in \cite{bashwinger2021neumann}, where $\mathscr{T}_d(\Psi^n(G)_*)$ is ICC, without $G$ necessarily being ICC. Bashwinger and Zaremsky point out that both of these $d$-ary cloning systems are pure, but the endomorphisms must all be the identity for the $d$-ary cloning systems to be uniform.

\end{example}

\section{Stable Group Actions}\label{sec:stab}
\subsection{Stability}

As noted in Section \ref{subsec:vNA}, a free, ergodic, p.m.p action of a countable group $G$ on a measure space gives rise to countable measured equivalence relation, which in turn generates a von Neumann algebra isomorphic to $L^\infty (X, \mu) \rtimes G.$  

We say an equivalence relation $\mathcal{R}$ is $\textit{stable}$ if $\mathcal{R} \simeq \mathcal{R} \times \mathcal{R}_0$, where $\mathcal{R}_0$ is the hyper-finite equivalence relation. From a von Neumann algebraic perspective, it is well-known that given a free, ergodic, p.m.p action, $\mathcal{R}_{G \curvearrowright (X,\mu)}$ being stable implies $L(\mathcal{R}_{G \curvearrowright (X,\mu)}) \cong L^\infty (X, \mu) \rtimes G$ is a McDuff II$_1$ factor. It is then immediate that a group giving rise to a stable equivalence relation is a McDuff group following the definition of Deprez and Vaes in \cite{deprez2018inner}.

Robin Tucker-Drob gave several classes of stable groups, including any nontrivial countable subgroups of $H(\mathbb{R}),$ the group of piecewise projective homeomrphisms, in \cite{tucker2020invariant}. Given that $F$ is such a subgroup of $H(\mathbb{R}),$ it is stable and thus a McDuff group. As Tucker-Drob remarks, it was already implicit in the literature that $F$ is stable. Bashwinger recently showed the $F_d$ is a McDuff group using character rigidity in \cite{bashwinger2023neumann}. In Section \ref{sec:SGA} we use Tucker-Drob's work to show a much larger class of Thompson-Like groups $\mathscr{T}_d(G_*)$ are stable.  

This idea of stability can be framed in terms of measured equivalence relations, orbit equivalence of groups, measure equivalence of groups, and discrete measured groupoids.  These perspectives are in general not equivalent to each other. However, when the action of $G$ is free, ergodic, and p.m.p, they all coalesce.

To recall the history briefly, Feldman and Moore's famous result tells us that every countable discrete measured equivalence relation $\mathcal{R}$ can be induced by a group action on the measure space (though not uniquely) \cite{feldman1977ergodic}. When $G$ and $H$ are countable discrete groups acting on probability space, the actions 
$G \curvearrowright (X,\mu)$ and $H \curvearrowright (Y,\nu)$ are \textit{orbit equivalent}, written $G \sim_{OE} H,$ if there exists an isomorphism $\phi: X \rightarrow Y$ that is measure-preserving and $\phi (G x) = H \phi (x)$ for a.e. $x \in X.$  We say $G$ is \textit{stable} if the free ergodic p.m.p action, $G \curvearrowright (X,\mu)$ is orbit equivalent to $G \times A \curvearrowright (X \times Y,\mu \times \nu)$ for any amenable group $A$, written $G \sim_{OE} G \times A.$
 Two groups are \textit{measure equivalent}, when there exist essentially free, ergodic, p.m.p actions which are weakly orbit equivalent, written $G \sim_{ME} H$. Note this is an equivalent formulation of measure equivalence first proved by Furman in \cite{furman1999orbit}. Orbit equivalence always implies measure equivalence, but measure equivalence is in general weaker. 

Jones and Schmidt showed in \cite{jones1987asymptotically} that a group action is stable, if and only if there is a specific sort of asymptotically central sequence of equivalence class preserving Borel automorphisms. The precise definition can be found in the same foundational paper. This approach is very useful for showing a group is stable, see for example \cite{kida2015stability}. A discrete measured groupoid $G \ltimes (X,\mu)$ generalizes a discrete measured equivalence relation. Kida gives the definition and a nice explanation of this connection in the beginning of Section 3 of \cite{kidastable2015}.  In particular, $\mathcal{R}_{G \curvearrowright (X,\mu)} \cong G \ltimes (X,\mu)$ if and only if $G \curvearrowright (X,\mu)$ is free.

Amenable groups are of course stable and Kida showed in \cite{kidastable2015} that any discrete countable group with the Haagerup property and having infinite center is stable.  On the other hand, countable discrete groups with property $(T)$ can never be stable.

We now include the precise theorem of Tucker-Drob, which we use to show certain $d$-ary cloning systems give rise to Thompson-Like groups that are stable.

\begin{theorem} \cite[Theorem 18]{tucker2020invariant}\label{TD18}
Let $1 \to N \to G \to K \to 1$ be a short exact sequence of groups in which $K$ is amenable. Then $G$ is stable if the following holds: \\
(H6) $N$ is doubly asymptotically commutative; meaning, there exists sequences $(c_n)_{n \in \mathbb{N}}$ and $(d_n)_{n \in \mathbb{N}}$ in $N$ such that $c_nd_n \neq d_nc_n$ for all $n \in \mathbb{N},$ and each $h \in N$ commutes with both $c_n$ and $d_n$ for cofinitely many $n \in \mathbb{N}.$

\end{theorem}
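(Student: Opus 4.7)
The plan is to apply the Jones--Schmidt criterion \cite{jones1987asymptotically}: a free ergodic p.m.p.\ action $G\curvearrowright(X,\mu)$ gives a stable orbit equivalence relation precisely when the full group $[\mathcal{R}_{G\curvearrowright X}]$ contains a non-trivial sequence that is asymptotically centralized (in the uniform metric on the full group) by every $g\in G$. Producing such a sequence for some free ergodic p.m.p.\ $G$-action is enough to conclude that $G$ is a stable group in the sense recalled earlier in this section.

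For the model $G$-action I would take the diagonal action on $(X,\mu):=(Y\times Z,\nu\times\eta)$, where $K\curvearrowright(Y,\nu)$ is any free ergodic p.m.p.\ action (which exists because $K$ is amenable, and is lifted to $G$ through the quotient $G\to K$) and $G\curvearrowright(Z,\eta)$ is a free ergodic action such as the Bernoulli shift. This product structure is the technical device that lets one treat the $K$-direction via amenability and the $N$-direction via hypothesis (H6) essentially independently: the $Y$-factor will absorb a F\o lner averaging over $K$, while the $Z$-factor will witness the non-commutativity coming from $N$.

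The key construction starts from the sequences $(c_n),(d_n)\subset N$. Each $c_n$ (resp.\ $d_n$), acting on $X$ as an element of $N\subset G$, defines a transformation $\phi_{c_n}\in[\mathcal{R}_{G\curvearrowright X}]$ that is asymptotically centralized by every $h\in N$ (because eventual commutativity in $N$ forces $\phi_h\phi_{c_n}=\phi_{c_n}\phi_h$), and such that $\phi_{c_n}$ and $\phi_{d_n}$ fail to commute on a set of positive $\mu$-measure (since $G\curvearrowright Z$ is free and $c_nd_n\neq d_nc_n$). To promote asymptotic $N$-centrality to asymptotic $G$-centrality I would fix a F\o lner sequence $(F_m)$ in $K$, a Borel section $s\colon K\to G$, and a Rokhlin-type measurable fundamental domain for the free $K$-action on $Y$; then define $\widetilde{\phi}_n\in[\mathcal{R}_{G\curvearrowright X}]$ to act by the conjugate $\phi_{s(k)c_ns(k)^{-1}}$ on the $Y$-piece labelled by $k\in F_{m(n)}$, and similarly build $\widetilde{\psi}_n$ from $(d_n)$. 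The F\o lner property forces $\widetilde{\phi}_n$ and $\widetilde{\psi}_n$ to asymptotically commute with every $g\in G$, while the $Z$-factor records the non-commutativity $c_nd_n\neq d_nc_n$ throughout. A standard passage from a non-commuting asymptotically central pair to non-trivial asymptotically central involutions then lets Jones--Schmidt conclude.

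The main obstacle is calibrating the growth rate of $F_{m(n)}$ against the ``eventual commutativity'' thresholds supplied by (H6): the averaging on $Y$ must be fine enough to kill the $K$-component of conjugation by each $g\in G$, yet coarse enough that the non-commutativity witnessed on $Z$ is not diluted to zero by the averaging. Keeping the locus of averaging ($Y$) disjoint from the locus of non-commutativity ($Z$) is precisely what makes this balance achievable, and it is why both amenability of $K$ and double asymptotic commutativity of $N$ enter the argument in essential ways.
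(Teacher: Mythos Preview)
The paper does not contain a proof of this statement at all: Theorem~\ref{TD18} is quoted verbatim from Tucker-Drob \cite[Theorem 18]{tucker2020invariant} and is used as a black box to deduce Theorem~\ref{thmstb}. So there is no ``paper's own proof'' to compare your proposal against.

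That said, your sketch is in the spirit of how such stability results are established, via the Jones--Schmidt criterion combined with a F{\o}lner averaging over the amenable quotient. A few points in your outline are imprecise and would need to be handled carefully in an actual proof. First, speaking of ``a Rokhlin-type measurable fundamental domain for the free $K$-action on $Y$'' is not quite right: a free ergodic p.m.p.\ action of an infinite group on a probability space has no measurable fundamental domain, and what one actually uses is a Rokhlin tower (or quasi-tiling) for amenable actions, which exists by Ornstein--Weiss. Second, a Borel section $s\colon K\to G$ always exists for countable groups, but you should be explicit that the conjugates $s(k)c_ns(k)^{-1}$ lie in $N$ (since $N$ is normal), so that the doubly asymptotic commutativity hypothesis still applies to them; this is what makes the $N$-centrality survive the averaging. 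Third, your ``standard passage from a non-commuting asymptotically central pair to non-trivial asymptotically central involutions'' is doing real work and is not entirely standard; Jones--Schmidt require a non-trivial asymptotically central sequence in the full group, and extracting this from two sequences that asymptotically commute with $G$ but not with each other is one of the substantive steps in Tucker-Drob's argument. If you intend to write this out in full, those three places are where the details concentrate.
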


\subsection{Stable Group Actions for Thompson-Like Groups}\label{sec:SGA}

In this section, we prove Theorem \ref{thmstb} by showing when $\mathscr{T}_d(G_*)$ satisfies Theorem \ref{TD18}, a result of Tucker-Drob. 

We begin by constructing a single type of asymptotically commuting sequence inside $D_d(G_*)$, which can then be specified into a pair of doubly asymptotically commuting sequences. The sequences come from approximating the identity map between two $d$-ary trees. 

To capture this notion, we need to use the natural labeling of the vertices of any $d$-ary tree by finite words of the alphabet $\{0, \dots, d-1 \}.$  In particular, the vertices on the right side of the tree will be labeled with $(d-1)^n$ where $n$ is the number of branch levels to the vertex.  Given a vertex $v,$ which can now be identified with it's $d$-ary word label, we say two trees, $T$ and $U$ \textit{agree away from v} if there exists a tree containing a leaf labeled by $v$ such that both $T$ and $U$ can be obtained by adding some $d$-ary tree to the leaf $v.$

\begin{lemma}\label{lem:ac1} Let $((G_n)_{n \in \mathbb{N}}, (\rho_n)_{n \in \mathbb{N}}, (\kappa^n_k)_{k \leq n})$ be a fully compatible, slightly pure, and uniform $d$-ary cloning system. Then there exists an asymptotically commuting sequence in the subgroup $D_d(G_*)$.

\begin{proof}
    Recall that elements of $D_d(G_*)$ are triples of the form $[T,g,U],$ where $\delta_r(T)=\delta_r(U).$  That is, the distance to the right-most leaf in $T$ and $U$ must be the same. Now for each $n \in \mathbb{N},$ let $T_n$ and $U_n$ be two trees that agree away from $(d-1)^n,$ with the same number of leaves (see figure \ref{fig:AC_seq}) and let $E_n$ be the subset of $D_d(G_*)$ of elements that can be represented minimally by some $[R,g,S]$ with $\delta_r(R)=\delta_r(S) = n.$ Then the natural labeling implies that the rightmost leaf of both $R$ and $S$ is labeled by $(d-1)^n$ and \cite[Lemma 5.3]{bashwinger2021neumann}, $[T_n,1,U_n]$ commutes with every element of $E_n.$ We now observe that $[T_n,1,U_n]$ will also commute with every element in $E_m$ for any $0 < m \leq n.$ To see this, note that if $[R,g,S]$ can be represented minimally by some $(R,g,S)$ with $\delta_r(R)=\delta_r(S) = m,$ then it can also be represented by some $(R', g', S')$ with $\delta_r(R')=\delta_r(S') = n$, where $(R', g', S')$ is an expansion of $(R,g,S)$ that agrees up to $(d-1)^m$ and is the identity map on the leaves and vertices between $(d-1)^m$  and $(d-1)^n.$  Now because $D_d(G_*) = \cup_{n\geq 0} E_n$, it follows that $([T_n,1,U_n])_{n \in \mathbb{N}}$ is an asymptotically commuting sequence in $D_d(G_*).$
\end{proof}

\end{lemma}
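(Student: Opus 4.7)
The plan is to construct an explicit sequence of elements in $D_d(G_*)$ that are ``supported near the rightmost leaf at increasing depth,'' and therefore asymptotically commute with everything else in $D_d(G_*)$. The heuristic is that an element $[R,g,S]$ with $\delta_r(R)=\delta_r(S)=m$ acts nontrivially only on the portion of the tree ``to the left of'' the rightmost branch of length $m$, so a pair $[T_n,1,U_n]$ that differs from the identity only in a subtree rooted at depth $n > m$ along the rightmost branch should commute with $[R,g,S]$ once we align the two representatives. The three hypotheses — fully compatible, slightly pure, uniform — are precisely what allow expansions along the rightmost branch to be carried out without interfering with $g$: slight purity guarantees that $\rho_n(g)$ fixes the rightmost leaf so cloning there does not get permuted away, full compatibility ensures $\kappa^n_k$ respects the symmetric-group cloning on that fixed index, and uniformity makes iterated cloning at rightmost positions well-defined.

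Concretely, for each $n$ I would choose two $d$-ary trees $T_n$ and $U_n$ with the same number of leaves that agree away from the rightmost vertex $(d-1)^n$ (labeling vertices by words in $\{0,\dots,d-1\}$ in the standard way), and set $a_n:=[T_n,1,U_n]$. Since $\delta_r(T_n)=\delta_r(U_n)=n$, we have $a_n\in D_d(G_*)$. The main claim to verify is that for every $x\in D_d(G_*)$ there is an $N$ with $a_n x=x a_n$ for all $n\geq N$. Writing $D_d(G_*)=\bigcup_m E_m$, where $E_m$ collects those elements admitting a representative $(R,g,S)$ with $\delta_r(R)=\delta_r(S)=m$, it suffices to show that $a_n$ commutes with every element of $E_m$ as soon as $n\geq m$.

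The core technical step is the following ``rightmost expansion'' lemma I would need: given $(R,g,S)$ with $\delta_r(R)=\delta_r(S)=m$, one can iteratively apply the cloning map $\kappa^{\bullet}_{\text{rightmost}}$ to expand along the right side and obtain an equivalent triple $(R',g',S')$ with $\delta_r(R')=\delta_r(S')=n$, in which the group element $g'$ acts as the identity on all leaves introduced during the expansion. Once both $a_n$ and $x$ admit representatives on the same underlying pair of trees of right-depth $n$, with $x$'s ``activity'' confined to leaves left of $(d-1)^n$ and $a_n$'s activity confined to the subtrees hanging off of $(d-1)^n$, the product rule for cloning systems (axiom C1) collapses both compositions to the same word, giving $a_n x = x a_n$. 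The reference to Lemma 5.3 of Bashwinger--Zaremsky can presumably be invoked in place of reproving this commutation at a fixed level $m$.

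The part I expect to be most delicate is the rightmost expansion step — specifically, checking that uniformity is exactly the condition that makes iterated cloning at the rightmost leaf produce an unambiguous element $g'$ whose action on the newly introduced leaves is trivial, and that slight purity is what prevents $\rho_n(g)$ from disturbing where the new carets are attached. Once that bookkeeping is done cleanly, asymptotic commutativity falls out because $D_d(G_*)=\bigcup_m E_m$ and each $x\in E_m$ commutes with $a_n$ for all $n\geq m$. The sequence $(a_n)$ would then serve as the asymptotically commuting sequence required; the upgrade to a \emph{doubly} asymptotically commuting pair needed for Theorem \ref{TD18} would be obtained afterward by choosing $T_n,U_n$ carefully so that a second sequence $b_n$ (built in the same way but from a different pair of trees differing near $(d-1)^n$) fails to commute with $a_n$.
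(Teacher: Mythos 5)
Your proposal is correct and follows essentially the same route as the paper: the same sequence $a_n=[T_n,1,U_n]$ built from trees agreeing away from $(d-1)^n$, the same decomposition $D_d(G_*)=\bigcup_m E_m$, the same rightmost-expansion step to lift a representative of right-depth $m$ to right-depth $n$ with trivial action on the new leaves, and the same appeal to \cite[Lemma 5.3]{bashwinger2021neumann} for the commutation at a fixed level. Your added commentary on which hypothesis (slight purity, full compatibility, uniformity) powers which part of the expansion is a useful elaboration but does not change the argument.
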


\begin{lemma}\label{lem:ac2} Under the assumptions of the previous lemma, $D_d(G_*)$ is doubly asymptotically commutative. 

\begin{proof}
    We can now construct two asymptotically commuting sequences $(a_n)_{n \in \mathbb{N}}$ and $(b_n)_{n \in \mathbb{N}}$ in $D_d(G_*)$ such that $a_nb_n \neq b_na_n$ for all $n \in \mathbb{N},$ by building off the previous lemma. 

    Following the the definition of $D_d(G_*)$ in equation \ref{Dgroup}, we can view $[F_d, F_d] \leq D_d(G_*).$  We then choose $a_n$ and $b_n$ in $[F_d, F_d]$ such that the support of $a_n$ is exactly $(1-1/d^n, 1),$ and the support of $b_n$ is exactly $(1-(d+1)/d^{n+1},1),$ following what was done in the proof of \cite[Lemma 2.3]{bashwinger2021neumann}. By support, we mean the portion of the unit interval that is not fixed by a self-homoeomprhism in $F_d.$ By staggering the support of $a_n$ and $b_n$ in this way, we guarantee they do not commute. 

    Now writing, $a_n = [T_n,1,U_n]$ and $b_n = [S_n,1,R_n],$ the choice of support tells us both pairs, $T_n, U_n$ and $S_n, R_n,$ will agree away from $(d-1)^{n-1}$ (See figure \ref{fig:AC_seq} for an example of an element of one of these sequences). Therefore, the two sequences $(a_n)_{n \in \mathbb{N}}$ and $(b_n)_{n \in \mathbb{N}}$ are of the same form as the sequence in Lemma \ref{lem:ac1} above. Hence, $D_d(G_*)$ is doubly asymptotically commutative. 
\end{proof}
    
\end{lemma}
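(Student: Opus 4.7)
The plan is to build on Lemma \ref{lem:ac1}, which already produced a single asymptotically commuting sequence $([T_n,1,U_n])_{n \in \mathbb N}$ inside $D_d(G_*)$ using pairs of trees that agree away from the rightmost vertex $(d-1)^n$. What remains is to exhibit, at each index $n$, a \emph{second} element of this same shape that additionally fails to commute with the first. Every subsequent step then just re-uses the mechanism of Lemma \ref{lem:ac1}.

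The natural source of such elements is the commutator subgroup $[F_d,F_d]$, which lies inside $D_d(G_*)$ via the inclusion $[F_d,F_d] \leq \pi(D_d(G_*))$ recorded just before the semidirect product decomposition $\mathscr{T}_d(G_*) \cong D_d(G_*) \rtimes \mathbb{Z}$. For each $n$, I would select an element $a_n \in [F_d,F_d]$ supported on the interval $(1 - 1/d^n,\,1)$ and an element $b_n \in [F_d,F_d]$ supported on the strictly larger interval $(1 - (d+1)/d^{n+1},\,1)$, where by support I mean the non-fixed portion of $[0,1]$. This is the same type of construction used in \cite[Lemma 2.3]{bashwinger2021neumann}, and the staggered endpoints are designed so that the two supports overlap on a proper subinterval, forcing $a_n b_n \neq b_n a_n$.

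Next, I would express $a_n = [T_n,1,U_n]$ and $b_n = [S_n,1,R_n]$ as triples in $\mathscr{T}_d(G_*)$. Because the supports live inside the $d$-adic neighborhood of $1$ of depth roughly $n-1$, one may choose the tree pairs $T_n,U_n$ and $S_n,R_n$ so that each pair agrees away from the single vertex $(d-1)^{n-1}$. Both sequences then have precisely the form analyzed in Lemma \ref{lem:ac1}, and that lemma's conclusion gives asymptotic commutativity for $(a_n)$ and $(b_n)$ separately in $D_d(G_*)$. Combined with $a_n b_n \neq b_n a_n$, this realizes condition (H6) of Tucker-Drob's Theorem \ref{TD18} for $N = D_d(G_*)$.

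The main obstacle is a balancing act in choosing the supports: they must be different enough in scale that $a_n$ and $b_n$ genuinely fail to commute at \emph{every} $n$, yet both must sit inside a sufficiently small right-neighborhood of $1$ so that the tree-agreement argument from Lemma \ref{lem:ac1} applies to each sequence individually. The specific radii $1/d^n$ and $(d+1)/d^{n+1}$ reconcile these requirements because (i) their ratio is bounded away from $1$, guaranteeing overlap of the correct asymmetric type to obstruct commutativity, and (ii) both tend to $0$, so for any fixed element of $D_d(G_*)$ the supports eventually lie entirely outside its support, making the commutation with $a_n$ and $b_n$ automatic for cofinitely many $n$.
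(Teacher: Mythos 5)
Your proposal is correct and follows essentially the same route as the paper's own proof: it realizes $[F_d,F_d]$ inside $D_d(G_*)$, picks $a_n$ and $b_n$ supported on $(1-1/d^n,1)$ and $(1-(d+1)/d^{n+1},1)$ respectively so that they fail to commute, and then observes that the corresponding tree pairs agree away from $(d-1)^{n-1}$, reducing asymptotic commutativity of each sequence to Lemma \ref{lem:ac1}. The only difference is expository (your closing discussion of why the two radii balance non-commutation against asymptotic centrality), which the paper leaves implicit by citing \cite[Lemma 2.3]{bashwinger2021neumann}.
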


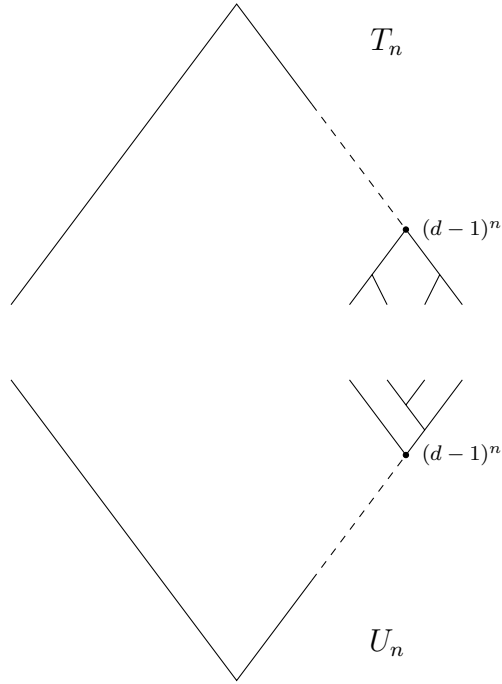
\begin{figure}
    \centering
    \begin{tikzpicture}
    \draw (-1,-1) -- (2,3) -- (3, 5/3) (4.25, 0) -- (5,-1) (4.25, 0) -- (3.5,-1) (3.8,-.6) -- (4,-1) (4.7,-.6) -- (4.5,-1);
    \draw[dashed] (3, 5/3) -- (4.25, 0);
\node at (5,0) {\tiny$(d-1)^n$};
\filldraw (4.25,0) circle (1pt);
    \draw (-1,-2) -- (2,-6) -- (3,-14/3) (4.25,-3) -- (5,-2) (4.25, -3) -- (3.5,-2) (4.5, -8/3) -- (4, -2) (17/4, -14/6) -- (4.5, -2);
    \draw[dashed] (3, -14/3) -- (4.25, -3);
    \node at (5,-3) {\tiny$(d-1)^n$};
\filldraw (4.25,-3) circle (1pt);
\node at (4,2.5) {$T_n$};
\node at (4,-5.5) {$U_n$};
\end{tikzpicture}
    \caption{This is an example of an element of the sequence $[T_n,1,U_n]$ from lemma \ref{lem:ac2}. Here $d=2$ and $\delta_r(T_n) = \delta_r(U_n) = n+2.$  Most of the element is the identity because $T_n$ and $U_n$ agree away from $(d-1)^n.$}
    \label{fig:AC_seq}
\end{figure}

\begin{theorem}[Theorem \ref{MainThrm:McDuff}]\label{thmstb}

Let $((G_n)_{n \in \mathbb{N}}, (\rho_n)_{n \in \mathbb{N}}, (\kappa^n_k)_{k \leq n})$ be a fully compatible, slightly pure, and uniform $d$-ary cloning system. Then the group $\mathscr{T}_d(G_*)$ is stable. Moreover, there exists an ergodic free p.m.p action such that $L^{\infty}(X, \mu) \rtimes \mathscr{T}_d(G_*)$ is a McDuff factor.
\begin{proof}

Since the $d$-ary cloning system is fully compatible, Lemma \ref{lem:surj} and the discussion proceeding it implies we can write $\mathscr{T}_d(G_*) \cong D_d(G_*) \rtimes \mathbb{Z},$ where $D_d(G_*)$ is defined in equation \ref{Dgroup}. The goal then is to show the split exact sequence, 
\[1 \rightarrow D_d(G_*) \rightarrow D_d(G_*) \rtimes \mathbb{Z} \rightarrow \mathbb{Z} \rightarrow 1,\]
satisfies Theorem \ref{TD18}. Given that that the $d$-ary cloning system is assumed to be fully compatible, slightly pure and uniform, Lemma \ref{lem:ac2} implies $D_d(G_*)$ is doubly asymptotically commutative.  Hence, the conditions for Theorem \ref{TD18} are satisfied, so $\mathscr{T}_d(G_*)$ is stable. From our discussion in Section \ref{sec:stab} we conclude that $\mathscr{T}_d(G_*)$ is a McDuff group.
\end{proof}

\end{theorem}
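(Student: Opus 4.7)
The plan rests on reducing Theorem \ref{MainThrm:McDuff} to Tucker-Drob's Theorem \ref{TD18} by exhibiting a short exact sequence for $\mathscr{T}_d(G_*)$ with amenable quotient and doubly asymptotically commutative kernel, and then translating the resulting stability into the McDuff group property via the dictionary recalled in Section \ref{sec:stab}.

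First I would exploit full compatibility together with slight pureness to obtain the semidirect-product decomposition $\mathscr{T}_d(G_*) \cong D_d(G_*) \rtimes \mathbb{Z}$ already described before Definition \ref{def:csProp}. Full compatibility supplies the surjection $\pi:\mathscr{T}_d(G_*)\to V_d$ of Lemma \ref{lem:surj}; slight pureness forces the image of $\pi$ to lie in $\hat V_d$, so the composite $\theta\circ\pi$ is a well-defined, split, surjective homomorphism onto $\mathbb{Z}$. This produces the split exact sequence
\[
1 \to D_d(G_*) \to \mathscr{T}_d(G_*) \to \mathbb{Z} \to 1,
\]
and since $\mathbb{Z}$ is amenable the quotient side of Tucker-Drob's hypothesis is free.

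The substantive step is to verify that $N = D_d(G_*)$ is doubly asymptotically commutative. The natural candidates are sequences of ``identity-like'' triples $[T_n,1,U_n]$, where $T_n$ and $U_n$ agree away from the rightmost vertex labelled $(d-1)^n$ in the canonical $d$-ary word labelling. Using the Bashwinger--Zaremsky commutation lemma, such a triple commutes with every element whose minimal representative has rightmost depth $n$; the point is then to promote this to cofinite commutation with any fixed $h \in D_d(G_*)$, by expanding a minimal representative of $h$ along the rightmost branch until its depth exceeds $n$, and observing that the expansion acts trivially on the newly added subtree along $(d-1)^n$. The uniformity axiom is essential here, because it makes this expansion insensitive to the choice of clone among the $d$ newly introduced leaves, while slight pureness guarantees that the cloned element continues to fix the rightmost leaf so the triple still lies in $D_d(G_*)$. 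To produce two such sequences whose members fail to commute at every $n$, I would draw $(a_n)$ and $(b_n)$ from $[F_d, F_d] \leq D_d(G_*)$, choosing their supports as subintervals of $(1-1/d^n,1)$ and $(1-(d+1)/d^{n+1},1)$ respectively; the staggered supports give $a_nb_n \neq b_na_n$ while both sequences have the required form.

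With hypothesis (H6) verified, Theorem \ref{TD18} yields stability of $\mathscr{T}_d(G_*)$. To close, I would invoke the discussion in Section \ref{sec:stab}: stability of the group produces an essentially free, ergodic, p.m.p.\ action whose orbit equivalence relation $\mathcal R$ satisfies $\mathcal R \simeq \mathcal R \times \mathcal R_0$, and then $L(\mathcal R) \cong L^{\infty}(X,\mu) \rtimes \mathscr{T}_d(G_*)$ is a McDuff II$_1$ factor, so by definition $\mathscr{T}_d(G_*)$ is a McDuff group. The main obstacle I expect is the second step, specifically checking that $[T_n,1,U_n]$ commutes with elements whose minimal representatives have smaller rightmost depth; this is exactly where uniformity, slight pureness, and full compatibility must cooperate, and once this bookkeeping is in place the rest of the argument is a direct appeal to Tucker-Drob and the equivalence-relation translation.
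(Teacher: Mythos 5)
Your proposal is correct and follows essentially the same route as the paper: the semidirect-product decomposition $\mathscr{T}_d(G_*) \cong D_d(G_*) \rtimes \mathbb{Z}$ via Lemma \ref{lem:surj}, verification of hypothesis (H6) by building the asymptotically commuting sequences $[T_n,1,U_n]$ agreeing away from $(d-1)^n$ and then staggering supports inside $[F_d,F_d]$ exactly as in Lemmas \ref{lem:ac1} and \ref{lem:ac2}, followed by Tucker-Drob's Theorem \ref{TD18} and the stability-to-McDuff translation. The only difference is presentational: you inline the content of the two auxiliary lemmas rather than citing them separately.
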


All of the groups $\mathscr{T}_d(G_*)$ constructed in \cite{bashwinger2021neumann} by Bashwinger and Zaremsky where $L(\mathscr{T}_d(G_*))$ are McDuff factors are also McDuff groups, answering question 6.4 that Bashwinger and Zaremsky posed in the same paper. In fact, Theorem \ref{thmstb} applies to larger class of groups, given that a group need not be ICC to be stable. 

The Thompson-Like groups from cloning systems, such that $L(\mathscr{T}_d(G_*))$ is a McDuff factor and $\mathscr{T}_d(G_*)$ is also a McDuff group include the following. The Higman-Thompson group $F_d$, which was shown to be a McDuff group independently in \cite{bashwinger2023neumann},  $bF_d,$ from example \ref{ex:braid}, $\hat{V}_d$, from the example following \ref{lem:surj}, $\mathscr{T}_d(\bar{B_*}(R))$ and $ \mathscr{T}_d(AB_*(R))$ from example \ref{ex:matrix}, and $\mathscr{T}_d(\Pi^n(G)_*)$ where the endomorphisms are the identity and $G$ is ICC from example \ref{ex:endo} and $\mathscr{T}_d(\Psi^n(G)_*)$, where the endomorphism are still the identity but $G$ is now not necessarily ICC also from example \ref{ex:endo}.

Notably, the Thompson-Like groups from cloning systems where $L(\mathscr{T}_d(G_*))$ is \textit{not} a McDuff factor but $\mathscr{T}_d(G_*)$ is a McDuff group, include $\mathscr{T}_d(B_*(R))$ from example \ref{lem:surj}, where we are no longer quotienting out by the center of $B_n(R)$ and $\mathscr{T}_d(\Pi^n(G)_*)$ from example \ref{ex:endo}, where we no longer assume $G$ is ICC, but the endomorphism are still the identity. However, in both of these examples $\mathscr{T}_d(G_*)$ is not ICC, so $L(\mathscr{T}_d(G_*))$ is no longer a factor.

\section{Deformations Associated to Quasi-Regular Representations}\label{sec:defRigPrimenessQuasi}
This section is devoted to proving our main result, Theorem \ref{thm:prime}, which gives criteria for a group von Neumann algebra to be prime when the group admits a cocycle into a quasi-regular representation that is not necessarily weakly contained in a multiple of the left-regular representation. Theorem \ref{thm:prime} could therefore be understood as generalizing \cite{peterson2009} with regards to the types of representations admitting unbound cocycles, while \cite{chifanSinclair2013} and \cite{chifan2016primeness} generalizes the notation of a cocycles itself (to quasi-cocycles and arrays).

Throughout this section we are going to use the following terminologies. Let $\pi:G\rightarrow \mathcal{O}(\ell^2(G/H))$ be an orthogonal representation of a countable group $G$ with a subgroup $H\leq G$ and let $c:G\rightarrow \ell^2(G/H)$ be a 1-cocycle. We now recall the Gaussian construction and s-malleable deformation associated to a 1-cocycle into an orthogonal representation, which was first defined in  \cite[Section 3]{sinclair2011strong} and subsequently used in \cite[Section 3.3 ]{peterson2012cocycle}.

 For an orthogonal representation $(\pi, \mathcal H)$ of a countable discrete group $G$ on a real Hilbert space $\mathcal H$, we have the associated Gaussian action $G\curvearrowright^{\sigma^\pi}L^{\infty}(X^\pi,\mu^{\pi})$, where $L^{\infty}(X^{\pi},\mu^{\pi})$ is the abelian von Neumann algebra generated by the unitaries $\{ \omega(\xi):\xi\in\mathcal H\}$. Then the action is abstractly characterized by the following conditions: $(a)\: \omega(\xi+\eta)=\omega(\xi)\omega(\eta)$, $(b)\:  \omega(\xi)^{\ast}=\omega(-\xi)$, $(c) \: \sigma^{\pi}_g\omega(\xi)=\omega(\pi(g)\xi)$, and $(d) \: \tau(\omega(\xi))=\exp(-\|\xi\|^2)$ for all $\xi \in \mathcal H$.

Letting $\tilde{M}=L^{\infty}(X^{\pi},\mu^{\pi})\rtimes_{\sigma^\pi} G $, we note that $L(G)\subseteq \tilde{M}$.  This is endowed with a trace $\tilde\tau (x)= \langle x (1\otimes \delta_e),1\otimes \delta_e ) \rangle$.  We can construct an s-malleable deformation for the inclusion $L(G)\subseteq \tilde M$, i.e.~a one-parameter family of automorphisms $\{\alpha_t\}_{t\in\mathbb R}$, given by:
$$ \alpha_t(u_g)= \omega(tc(g))u_g.$$

The variation of Popa's transversality inequality we will need appears in \cite{vaes2013one}, but is essentially contained in  \cite[Lemma 2.1]{popa2008superrigidity}.

\begin{lemma}[\cite{vaes2013one} Lemma 3.1 ]\label{lemma:trans} Assuming $\tilde M$ and $\alpha_t$ are as define above, then for all $x \in M$ and $t \in \mathbb{R}$ the following hold:
\[\|\alpha_t(x) - E_M(\alpha_t(x)) \|_2 \leq \|x - \alpha_t(x) \|_2 \leq \sqrt{2}\|\alpha_t(x) - E_M(\alpha_t(x)) \|_2.\]
    
\end{lemma}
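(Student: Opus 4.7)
The approach is to introduce the symbol operator $T_t := E_M\circ\alpha_t$ acting on $L^2(M)$ and reduce the right-hand inequality to a spectral statement about $T_t$. The left-hand inequality should fall out immediately from the $L^2$-projection property of $E_M$.

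First I would record the Pythagorean decomposition: since $x\in M$ and $\alpha_t(x)-E_M(\alpha_t(x))\perp L^2(M)$,
\[
\|x-\alpha_t(x)\|_2^2 \;=\; \|x-E_M(\alpha_t(x))\|_2^2 \;+\; \|\alpha_t(x)-E_M(\alpha_t(x))\|_2^2.
\]
The left inequality of the lemma is immediate from this identity. For the right inequality, it suffices to show $\|x-E_M(\alpha_t(x))\|_2 \leq \|\alpha_t(x)-E_M(\alpha_t(x))\|_2$; expanding squares and using self-adjointness of $T_t$ to identify $\|T_t(x)\|_2^2=\langle T_t^2 x,x\rangle$, this collapses to the operator inequality $\langle T_t(I-T_t)x,x\rangle\geq 0$.

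The remaining work is to verify the three structural properties of $T_t$ on $L^2(M)$: self-adjointness, positive semidefiniteness, and contractivity. Self-adjointness I would derive from the s-malleable identity $E_M\circ\alpha_t=E_M\circ\alpha_{-t}$, which uses that the canonical involution $\beta$ (fixing $M$ pointwise and satisfying $\beta\alpha_t=\alpha_{-t}\beta$) also preserves $L^2(M)^\perp$: writing $\alpha_t(x)=E+z$ with $E\in M$ and $z\perp M$, one gets $\alpha_{-t}(x)=\beta(\alpha_t(x))=E+\beta(z)$, whose $M$-component is again $E$. Positivity I would extract from the Gaussian structure by computing $T_t(u_g)=e^{-t^2\|c(g)\|^2}u_g$ directly, using $\tau(\omega(\xi))=\exp(-\|\xi\|^2)$ together with the covariance $\sigma_g^\pi(\omega(\xi))=\omega(\pi(g)\xi)$; this exhibits $T_t$ as diagonal in $\{u_g\}$ with spectrum in $(0,1]$ (equivalently, the semigroup identity $T_t=T_{t/\sqrt{2}}^2$ realizes $T_t$ as a self-adjoint square). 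Contractivity is automatic as $T_t$ is a composition of $L^2$-contractions. Once $T_t$ is known to be self-adjoint with spectrum in $[0,1]$, both $(I-T_t)^2\geq 0$ and $T_t(I-T_t)\geq 0$ are immediate from spectral calculus, completing the proof. The main bookkeeping subtlety I anticipate is the Fourier calculation $\tau(u_g^*\alpha_t(u_h))=\delta_{g,h}\,e^{-t^2\|c(h)\|^2}$, which requires orthogonality of $\pi$ to reduce $\|\pi(h^{-1})c(h)\|$ to $\|c(h)\|$; after that step, everything is formal.
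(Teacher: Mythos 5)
The paper does not actually prove this lemma---it is imported verbatim from \cite{vaes2013one} (Lemma 3.1), itself essentially \cite{popa2008superrigidity} (Lemma 2.1)---so there is no in-text argument to compare against; your proof is correct and is in substance the standard one behind those references: the Pythagorean splitting $\|x-\alpha_t(x)\|_2^2=\|x-E_M(\alpha_t(x))\|_2^2+\|\alpha_t(x)-E_M(\alpha_t(x))\|_2^2$ together with the fact that $T_t=E_M\circ\alpha_t|_{L^2(M)}$ is the positive self-adjoint contraction $u_g\mapsto e^{-t^2\|c(g)\|^2}u_g$. The only step you leave implicit is that collapsing the right-hand inequality to $\langle T_t(I-T_t)x,x\rangle\geq 0$ also uses $\|\alpha_t(x)\|_2=\|x\|_2$, i.e.\ that $\alpha_t$ is a trace-preserving automorphism of $\tilde M$, which is immediate for the Gaussian deformation.
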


Part of the ingenuity in Popa's well established intertwining technique, is it's interaction with the idea of mixing relative to a subalgebra.  

\begin{definition}
 Let $(M, \tau)$ be a tracial von Neumann algebra and $N\leq M$. Let $p\in \mathscr{P}(M)$ and $\mathcal Q\leq pMp.$ A $\mathcal Q$-$\mathcal Q$ bimodule ${}_{\mathcal Q}\mathcal{H}_{\mathcal Q}$ is \emph{mixing relative to $N$} if any net $(x_n)_{n \in I}$ in $(\mathcal Q)_1$ with $\|{E}_N(yx_nz)\|_2 \to 0$ for all $y, z \in M$ satisfies
\begin{align*}
    \lim_n \sup_{y \in (\mathcal Q)_1} |\langle x_n\xi y, \eta\rangle| = \lim_n \sup_{y \in (\mathcal Q)_1} |\langle y\xi x_n, \eta\rangle| = 0 \quad\text{ for all }\quad \xi, \eta \in \mathcal{H}.
\end{align*}
An $M$-$M$ bimodule ${}_M\mathcal{H}_M$ which is mixing relative to $\mathbb{C}$ is simply called \emph{mixing}.
\end{definition}

For us, the key insight is how mixing relative to a subalgebra in the absents of intertwining gives us control over relative commutants. This idea is captured in the following well known fact. 

\begin{lemma}[\cite{dSRHHS21} Lemma 6.3]\label{lemma:diffuse}
    Suppose tracial von Neumann algebras $A < M < \tilde M$ are such that $L^2(\tilde M) \ominus L^2(M)$ is mixing relative to $A$. Then for any $q \in \mathscr{P}(M)$ and $\mathcal Q \leq qMq$, either
    \begin{enumerate}
        \item $\mathcal Q \prec_M A,$ or
        \item $\mathcal Q' \cap q\tilde{M} q \subseteq qMq$.
    \end{enumerate}
\end{lemma}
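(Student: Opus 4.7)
The plan is to prove the dichotomy by contrapositive: assume $\mathcal Q \not\prec_M A$ and deduce $\mathcal Q' \cap q\tilde M q \subseteq qMq$. By the negation of Popa's intertwining (the second equivalent condition in the intertwining theorem recalled above), there exists a sequence of unitaries $(u_n)_n \subseteq \mathcal{U}(\mathcal Q)$ such that $\|E_A(yu_nz)\|_2 \to 0$ for all $y,z \in M$. This sequence will serve as the input for the relative mixing hypothesis applied to the $M$-$M$ bimodule $\mathcal{H} := L^2(\tilde M) \ominus L^2(M)$ (viewed as a $\mathcal Q$-$\mathcal Q$ bimodule by restriction).

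Now fix an arbitrary $x \in \mathcal Q' \cap q\tilde M q$. I would decompose $x = E_M(x) + \xi$, where $\xi := x - E_M(x) \in q\mathcal{H}q$. The goal becomes showing $\xi = 0$. Because $u_n \in \mathcal Q$ and $x \in \mathcal Q'$, we have $u_n x = x u_n$, equivalently $u_n E_M(x) - E_M(x)u_n = \xi u_n - u_n \xi$. Since $u_n \in M$, the left-hand side lies in $L^2(M)$ while the right-hand side lies in $\mathcal H = L^2(\tilde M) \ominus L^2(M)$, and these subspaces intersect trivially, so both sides vanish. In particular $u_n \xi u_n^* = \xi$ for every $n$.

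The conclusion then follows directly from the relative mixing hypothesis. Since $\|E_A(yu_nz)\|_2 \to 0$ for all $y,z \in M$, the definition of mixing of $\mathcal H$ relative to $A$ yields
\begin{equation*}
\lim_n \sup_{v \in (\mathcal Q)_1} |\langle u_n \xi v, \xi\rangle| = 0.
\end{equation*}
Specializing to $v = u_n^* \in (\mathcal Q)_1$ and using $u_n \xi u_n^* = \xi$, we get $|\langle u_n \xi u_n^*, \xi \rangle| = \|\xi\|_2^2$, forcing $\|\xi\|_2 = 0$. Hence $x = E_M(x) \in qMq$, and since $x \in \mathcal Q' \cap q\tilde M q$ was arbitrary, conclusion (2) holds.

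The only subtlety is ensuring the relative mixing condition, stated for $\mathcal Q$-$\mathcal Q$ bimodules, can be applied with the $M$-$M$ bimodule $L^2(\tilde M) \ominus L^2(M)$; this is routine since the relevant supremum is taken over $(\mathcal Q)_1$ and the test vectors $\xi$ lie in $\mathcal{H}$ regardless of which algebra is acting. No serious obstacle is expected: the argument is a clean, standard combination of transversality-era commutation bookkeeping with the relative mixing axiom.
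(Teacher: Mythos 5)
Your proof is correct and is essentially the standard argument for this lemma (the one given in the cited reference): negate intertwining to produce the unitaries $u_n$, split $x=E_M(x)+\xi$ with $\xi\in L^2(\tilde M)\ominus L^2(M)$, use the orthogonal sub-bimodule decomposition to get $u_n\xi u_n^*=\xi$, and then let relative mixing force $\|\xi\|_2=0$. No gaps.
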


The relative mixing property for bimodules arises naturally in the setting of the Gaussian construction. Indeed, let  $\pi:G\to \mathcal{O}(\mathcal{H})$ be representation of $G$ and $\mathcal{S}$ a family of subgroups of $G$. We say $\pi$ is \textit{mixing relative to} $\mathcal{S}$ if for any $\varepsilon>0$ and $\xi,\eta\in \mathcal{H}$, there exists $g_1,\ldots, g_n, h_1,\ldots, h_n\in G$ and $H_1,\ldots, H_n\in \mathcal{S}$ such that for any $g\in G\setminus \bigcup_{i=1}^ng_iH_ih_i$ we have $|\langle \pi(g)\xi, \eta \rangle|<\varepsilon$. It is readily checked that if $\mathcal{S}$ consists of solely the trivial group, then this is equivalent to mixing.  
Furthermore, the quasi-regular representation $\pi: G\to \mathcal{O}(\ell^2(G/H))$ is mixing relative to $\mathcal{S}=\{H\}$.  By \cite[Proposition 2.7]{boutonnet2012solid} (see also \cite{peterson2012cocycle}), we have the following.

\begin{lemma}\label{lem:relmix}
    Let $\tilde M$ be the Gaussian construction coming from the quasi-regular representation as defined above. Then $L^2(\tilde M)\ominus L^2(M)$ is mixing relative to $L(H)$.
\end{lemma}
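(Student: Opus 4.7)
The strategy is to exploit the Fock-space decomposition that is built into the Gaussian functor and reduce the claim to a matrix-coefficient estimate for the quasi-regular representation itself. Concretely, identifying $L^2(\tilde M)=L^2(X^\pi,\mu^\pi)\otimes \ell^2(G)$ in the usual Gaussian way, the symmetric Fock decomposition
\[
L^2(X^\pi,\mu^\pi)=\mathbb C\,\Omega\;\oplus\;\bigoplus_{n\geq 1}\mathcal H^{\odot n}
\]
together with the formulas $\sigma^\pi_g\omega(\xi)=\omega(\pi(g)\xi)$ show that as $L(G)$--$L(G)$ bimodules one has
\[
L^2(\tilde M)\ominus L^2(M)\;\cong\;\bigoplus_{n\geq 1}\bigl(\mathcal H^{\odot n}\otimes \ell^2(G)\bigr),
\]
where on each summand the left action of $u_g\in L(G)$ is $\pi^{\odot n}(g)\otimes \lambda_g$ and the right action is $1\otimes \rho_g$. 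A truncation argument (choose $N$ so that $\|\xi-P_{\leq N}\xi\|_2$ and $\|\eta-P_{\leq N}\eta\|_2$ are small, where $P_{\leq N}$ is the projection onto the first $N$ Fock levels) reduces the mixing claim to a single level, and indeed to vectors of the form $\xi=\zeta\otimes \delta_a$, $\eta=\zeta'\otimes \delta_b$ with $\zeta,\zeta'\in\mathcal H^{\odot n}$.

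The next step is to record that the relative mixing property of $\pi$ is inherited by its tensor (hence symmetric) powers: for elementary tensors one has $\langle \pi^{\otimes n}(g)\eta_1\otimes\cdots\otimes \eta_n,\eta_1'\otimes\cdots\otimes \eta_n'\rangle=\prod_i \langle \pi(g)\eta_i,\eta_i'\rangle$, so by bounding one factor outside a finite union of double cosets $g_j H h_j$ and the rest by Cauchy--Schwarz, $\pi^{\odot n}$ is again mixing relative to $\{H\}$ in the sense recalled before Lemma~\ref{lem:relmix}. Given $\varepsilon>0$, one then produces finitely many elements $g_1,\dots,g_N,h_1,\dots,h_N\in G$ such that $|\langle \pi^{\odot n}(g)\zeta,\zeta'\rangle|<\varepsilon$ for every $g\notin \bigcup_j g_j H h_j$.

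With the decomposition above, a direct computation shows that for any $y=\sum_{g'} y(g')\lambda_{g'}\in (L(G))_1$ and $x_k=\sum_g x_k(g)\lambda_g\in (L(G))_1$,
\[
\langle x_k(\zeta\otimes \delta_a)y,\zeta'\otimes \delta_b\rangle
=\sum_g x_k(g)\,\langle \pi^{\odot n}(g)\zeta,\zeta'\rangle\, y(a^{-1}g^{-1}b).
\]
Applying Cauchy--Schwarz in $g$ bounds the absolute value squared by
\[
\Bigl(\sum_g |x_k(g)|^2\,|\langle \pi^{\odot n}(g)\zeta,\zeta'\rangle|^2\Bigr)\cdot \Bigl(\sum_g |y(a^{-1}g^{-1}b)|^2\Bigr),
\]
and the second factor is at most $\|y\|_2^2\leq 1$, uniformly in $y\in (L(G))_1$.

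The last step is to bound the first factor. Split the sum over $g$ according to whether $g\in\bigcup_j g_jHh_j$ or not. Outside the double cosets the summand is at most $\varepsilon^2|x_k(g)|^2$, whose total sum is $\leq \varepsilon^2$. Inside each double coset one uses the key identity
\[
\sum_{g\in g_jHh_j}|x_k(g)|^2=\|E_{L(H)}(\lambda_{g_j^{-1}}x_k\lambda_{h_j^{-1}})\|_2^{\,2},
\]
which tends to $0$ by the hypothesis on the net $(x_k)$; as there are finitely many $j$, the inside-contribution vanishes in the limit. Letting $\varepsilon\to 0$ and symmetrizing in the argument for $\sup_{y}|\langle y\xi x_k,\eta\rangle|$ (which is formally identical, with the roles of the left/right $L(G)$-actions swapped), one obtains the desired relative mixing on each Fock level; the truncation step then delivers the statement on the full orthogonal complement. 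The only delicate point is the bookkeeping in the second displayed formula above: one must verify that the identification $L^2(\tilde M)\cong L^2(A)\otimes \ell^2(G)$ intertwines left/right multiplication by $u_g$ with $\sigma^\pi_g\otimes \lambda_g$ and $1\otimes \rho_g$ respectively, so that the right action of $\lambda_{g'}$ really does land on the $\ell^2(G)$ factor alone and allows the clean Cauchy--Schwarz split above.
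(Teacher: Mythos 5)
Your argument is correct, but it takes a different route from the paper: the paper does not prove this lemma at all, it simply invokes \cite[Proposition 2.7]{boutonnet2012solid} (together with the observation, made just above the lemma, that the quasi-regular representation $\ell^2(G/H)$ is mixing relative to $\{H\}$). What you have written is essentially a self-contained reconstruction of that cited result: the chaos/Fock decomposition $L^2(\tilde M)\ominus L^2(M)\cong\bigoplus_{n\geq 1}\mathcal H_{\mathbb C}^{\odot n}\otimes\ell^2(G)$ with left action $\pi^{\odot n}\otimes\lambda$ and right action $1\otimes\rho$, the stability of relative mixing under (symmetric) tensor powers, and the Cauchy--Schwarz split in which the double-coset identity $\sum_{g\in g_jHh_j}|x_k(g)|^2=\|E_{L(H)}(\lambda_{g_j}^{-1}x_k\lambda_{h_j}^{-1})\|_2^2$ converts the hypothesis on the net $(x_k)$ into the vanishing of the inside-coset contribution. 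All the individual steps check out, including the uniform bound $\sum_g|y(a^{-1}g^{-1}b)|^2\leq\|y\|_2^2\leq 1$ over the unit ball and the symmetric case $\langle y\xi x_k,\eta\rangle$, where the relevant set $\{g: bg^{-1}a^{-1}\in\bigcup_j g_jHh_j\}$ is again a finite union of $H$-double cosets so the same estimate applies. Two points you compress and should spell out if this were to stand alone: the passage from elementary tensors to general vectors of $\mathcal H_{\mathbb C}^{\odot n}$ in the relative-mixing estimate (a routine $\varepsilon/3$ approximation using the uniform bound $|\langle\pi^{\odot n}(g)\zeta,\zeta'\rangle|\leq\|\zeta\|\,\|\zeta'\|$), and the verification that the bimodule identification really is $\sigma^\pi_g\otimes\lambda_g$ on the left and $1\otimes\rho_g$ on the right, which you correctly flag as the delicate bookkeeping. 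The trade-off is the usual one: the paper's citation is shorter and defers the Fock-space analysis to the literature, while your proof makes explicit exactly where the relative mixing of $\pi$ and the conditional-expectation hypothesis enter.
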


\begin{lemma}\label{uniform-convergence-on-factors}
    Let $\pi:G\rightarrow \mathcal{O}(\ell^2(G/H))$ be the quasi-regular representation of a countable ICC group $G$ with respect to a subgroup $H < G$ and let $c:G\rightarrow \ell^2(G/H)$ be a 1-cocycle. Set $M:=L(G)$ and let $\alpha_t$ be the associated Gaussian deformation for $M$ in $\tilde M$, as defined above. 
     Let $\mathcal P, \mathcal Q\subseteq M$ be two diffuse factors such that $M=\mathcal P\Bar{\otimes}\mathcal Q$. Assume $\mathcal P$ is not amenable relative to $L(H)$ in $M$. Then $\alpha_t\rightarrow Id$ uniformly on $(\mathcal Q)_1$.
\end{lemma}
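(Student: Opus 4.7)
The plan is to argue by contradiction using a spectral gap argument in the Gaussian deformation combined with the relative mixing property of Lemma \ref{lem:relmix}. First I would suppose $\alpha_t$ does not tend to the identity uniformly on $(\mathcal Q)_1$, producing $\varepsilon>0$, $t_n\to 0$, and $y_n\in(\mathcal Q)_1$ with $\|\alpha_{t_n}(y_n)-y_n\|_2\geq\varepsilon$. Setting
\[
\xi_n:=\alpha_{t_n}(y_n)-E_M(\alpha_{t_n}(y_n))\in L^2(\tilde M)\ominus L^2(M),
\]
Popa's transversality inequality (Lemma \ref{lemma:trans}) gives $\|\xi_n\|_2\geq \varepsilon/\sqrt{2}$, while trivially $\|\xi_n\|_2\leq 1$.

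Next I would exploit the tensor decomposition to show that $(\xi_n)$ is asymptotically $\mathcal P$-central. Because $M=\mathcal P\bar\otimes\mathcal Q$, every $x\in\mathcal P$ commutes with $y_n$, and hence $\alpha_{t_n}(x)$ commutes with $\alpha_{t_n}(y_n)$. Combining this with the pointwise $\|\cdot\|_2$-continuity $\|\alpha_{t_n}(x)-x\|_2\to 0$ (standard for Gaussian deformations built from a representation with a $1$-cocycle) and the $M$-bimodularity of $E_M$, a short triangle inequality estimate yields
\[
\|x\xi_n-\xi_n x\|_2\xrightarrow{n\to\infty}0\quad\text{for every }x\in\mathcal P,
\]
with the uniform $M$-action bounds $\|x\xi_n\|_2,\|\xi_n x\|_2\leq\|x\|$. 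By Lemma \ref{lem:relmix}, the $M$-$M$ bimodule $L^2(\tilde M)\ominus L^2(M)$ is mixing relative to $L(H)$, which in the Popa--Vaes framework translates to a weak containment into an infinite multiple of the Jones-tower bimodule $L^2(M)\otimes_{L(H)}L^2(M)$. Transferring matrix coefficients along this weak containment, I would produce a net $(\eta_n)$ in a multiple of $L^2(M)\otimes_{L(H)}L^2(M)$ inheriting the three hypotheses of Lemma \ref{lem:relameSeq} (with that lemma's subalgebra $\mathcal Q$ set to $L(H)$ and $\mathcal K:=L^2(M)$).

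Applying Lemma \ref{lem:relameSeq} then furnishes a nonzero projection $p'\in\mathcal Z(\mathcal P'\cap M)$ such that $\mathcal P p'$ is amenable relative to $L(H)$ in $M$. Because $\mathcal P$ is a factor and $M=\mathcal P\bar\otimes\mathcal Q$, the relative commutant satisfies $\mathcal P'\cap M=\mathcal Q$, and since $\mathcal Q$ is also a factor $\mathcal Z(\mathcal P'\cap M)=\mathbb{C}$, forcing $p'=1$. Hence $\mathcal P$ itself is amenable relative to $L(H)$ in $M$, contradicting the hypothesis and completing the contradiction. The main technical obstacle will be the transfer step from $L^2(\tilde M)\ominus L^2(M)$ into the bimodule $L^2(M)\otimes_{L(H)}L^2(M)$: extracting from the relative mixing of Lemma \ref{lem:relmix} an honest weak containment of $M$-$M$ bimodules, and then carrying out the standard matrix-coefficient approximation so that all three conditions of Lemma \ref{lem:relameSeq} are preserved under the transfer, is the one delicate piece of bookkeeping; the rest of the argument is driven cleanly by transversality, pointwise $\|\cdot\|_2$-continuity of $\alpha_t$, and the commutation $[\mathcal P,\mathcal Q]=0$.
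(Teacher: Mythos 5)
Your overall architecture matches the paper's: contradiction, transversality, asymptotic $\mathcal P$-centrality of the defect $\xi_n=\alpha_{t_n}(y_n)-E_M(\alpha_{t_n}(y_n))$ via $[\mathcal P,\mathcal Q]=0$, then Lemma \ref{lem:relameSeq} and factoriality to force $p'=1$. But the step you flag as ``delicate bookkeeping'' is in fact a false implication, and it is the load-bearing step. Mixing relative to $L(H)$ (Lemma \ref{lem:relmix}) does \emph{not} translate into a weak containment of $L^2(\tilde M)\ominus L^2(M)$ into a multiple of $L^2(M)\otimes_{L(H)}L^2(M)$; these are independent properties. Concretely, $L^2(\tilde M)\ominus L^2(M)$ decomposes over the symmetric tensor powers of $\ell^2(G/H)$, and the $k$-th power contributes bimodules of the form $L^2(M)\otimes_{L(K)}L^2(M)$ with $K=\bigcap_i g_iHg_i^{-1}$. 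Having $\ell^2(G/K)\prec\ell^2(G/H)^{\oplus\infty}$ would force $\ell^2(H/K)\prec 1_H^{\oplus\infty}$, i.e.\ $K$ co-amenable in $H$; for the intended applications ($G=V_d$, $H=V_{[0,1/d)}$) these intersections can be trivial while $H$ is non-amenable, so the weak containment genuinely fails. Consequently your transfer step cannot produce the net $(\eta_n)$ in a multiple of $L^2(M)\otimes_{L(H)}L^2(M)$, and the argument does not close.

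The paper avoids this entirely by never leaving $\tilde M$: it first upgrades the hypothesis to ``$\mathcal P$ is not amenable relative to $L(H)$ in $\tilde M$,'' then applies Lemma \ref{lem:relameSeq} with ambient algebra $\tilde M$ and $\mathcal K=L^2(\tilde M)$ (an $L(H)$-$\tilde M$ bimodule), so the vectors $\xi_n$ already sit in a bimodule of the required form and no weak containment into the relative coarse bimodule over $M$ is needed. The price is that the resulting projection $p'$ lives in $\mathcal Z(\mathcal P'\cap\tilde M)$ rather than $\mathcal Z(\mathcal P'\cap M)$, and this is precisely where the relative mixing of Lemma \ref{lem:relmix} is actually used: combined with $\mathcal P\not\prec_{\tilde M}L(H)$ and Lemma \ref{lemma:diffuse}, it gives $\mathcal P'\cap\tilde M\subseteq M$, hence $\mathcal P'\cap\tilde M=\mathcal Q$ is a factor and $p'=1$, contradicting non-relative-amenability in $\tilde M$. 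Your proposal omits this use of relative mixing because the (unavailable) transfer would have made it unnecessary. A smaller point in the same direction: condition (1) of Lemma \ref{lem:relameSeq} is the $\|x\xi_n\|_2\leq\|x\|_2$ bound, which comes from $\|\alpha_{-t_n}(x)y_n\|_2\leq\|x\|_2\|y_n\|_\infty$ and trace-preservation of $\alpha_t$, not from the operator-norm bound $\|x\xi_n\|_2\leq\|x\|$ you cite.
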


\begin{proof}
     Suppose for a contradiction that the convergence $\alpha_t\to \operatorname{Id}$ is not uniform on $ (\mathcal Q)_1$. Then there exists a sequence $(u_n)_{n \in \mathbb{N}} \in \mathcal{U}(\mathcal Q)$, $t_n > 0$ in $\mathbb{R}$, and $\kappa >0$ such that $\|\alpha_{t_n}(u_n) \|_2 > \kappa/ \sqrt{2}$ for all $n$ in $\mathbb{N}$. Setting $\delta_t(q):=\alpha_t(q)-E_{M}( \alpha_t(q))$ for any $q \in (\mathcal Q)_1$ and using Popa's transversality estimate above, we also have $\|\delta_{t_n}(u_n) \|_2 > \kappa$ for all $n$ in $\mathbb{N}$. Since $\mathcal P$ is not amenable relative to $L(H)$ in $M$, we also know that $\mathcal P$ is not amenable relative to $L(H)$ in $\tilde M$.  To obtain a contradiction, we claim $\xi_n : =\frac{\delta_{t_n}(u_n)}{\|\delta_{t_n}(u_n)\|_2}$ is a sequence that satisfies the conditions for relative amenability in Lemma \ref{lem:relameSeq}. First, given that $(\xi_n)$ is in $\tilde M < \langle \tilde M, e_{L(H)} \rangle$ for all $n$, we can take the $L(H)$-$\tilde M$-bimodule $\mathcal K$ to be $L^2(\tilde M)$ and view $(\xi_n)$ as a sequence in $L^2\langle \tilde M, e_{L(H)} \rangle$.
     
     Condition (1) of Lemma \ref{lem:relameSeq} is satisfied since the Cauchy-Schwarz inequality and $(\xi_n)$ being a sequence of norm one vectors implies $\limsup_n\|x\xi_n\|_2 \leq \|x\|_2$ for all $x \in M$. Condition (2) is immediate from the definition of $(\xi_n)$. For condition (3), we use a standard estimate that appears, for example, in \cite[Lemma 3.1]{hoff2016neumann}. Using $\delta_{t_n}(u_n)$ again, we see for any $y \in \mathcal P$,  

     \begin{align*}
 \|\delta_{t_n}(u_n)y-y\delta_{t_n}(u_n)\|_2
    = & \|(1- E_M)([\alpha_{t_n}(u_n),y])\|_2 \\
    \leq & \|[\alpha_{t_n}(u_n),y] \|_2 \\
    \leq & \| \alpha_{t_n}(u_n)y - \alpha_{t_n}(u_n)\alpha_{t_n}(y)\|_2 + \|[\alpha_{t_n}(u_n), \alpha_{t_n}(y)] \|_2 \\
    & + \| \alpha_{t_n}(y)\alpha_{t_n}(u_n) - y\alpha_{t_n}(u_n)\|_2 \\
     \leq & 2\|q\|\|\alpha_{t_n}(y)-y\|_2 +  \|[u_n, y] \|_2\\
    \leq & 2 \|\alpha_{t_n}(y)-y\|_2,
\end{align*}

where in the second to last line we use the fact that $\mathcal P = \mathcal Q' \cap M$. Now taking limits, we see that for all $y \in P$, 

\[\lim_{n\rightarrow \infty}\|\xi_ny -y\xi_n \|_2 \leq 2\lim_{n\rightarrow \infty}  \|\alpha_{t_n}(y)-y\|_2 = 0. \]

Therefore, the conditions of Lemma \ref{lem:relameSeq} are satisfied, so there exists a non-zero projection $p' \in \mathcal{Z}(\mathcal P' \cap M)$ such that $\mathcal Pp'$ is amenable relative to $L(H)$ inside $\tilde M$. However, we also know that by Lemma \ref{lem:relmix}, $L^2(\tilde M) \ominus L^2(M)$ is mixing relative to $L(H)$. Moreover, the non-relative amenability assumption also implies $\mathcal P \not \prec_{\tilde M} L(H)$. Therefore, Lemma \ref{lemma:diffuse} implies $\mathcal P'\cap \tilde M \subset M$, so in particular, $\mathcal P'\cap \tilde M = \mathcal P'\cap M = \mathcal Q$ and $\mathcal{Z}(\mathcal P' \cap \tilde M) = \mathbb{C}$. So in fact, $p' = 1$, which is a contradiction. Therefore, $\alpha_t \rightarrow Id$ uniformly on $(\mathcal Q)_1$, as $t \rightarrow 0$.
    \end{proof}

In light of the previous lemma, if we know that both $\mathcal P$ and $\mathcal Q$ are not amenable relative to $L(H)$ in $M$, then $\alpha_t\rightarrow Id$ uniformly on the subgroup $G =\mathcal{U}(\mathcal P)\mathcal{U}(\mathcal Q)$ of $\mathcal{U}(M)$, and $G'' = M$. (See for example the proof of \cite[Theorem 3.8]{dSRHHS21}). Therefore, a standard convexity argument  will upgrade the uniform convergence of $\alpha_t$ to all of $(M)_1$. 

\begin{lemma}\label{lemma:convex}
    If $\alpha_t\rightarrow Id$ uniformly on $\mathscr G$ for some $\mathscr G\leq \mathcal U(M)$ with $\mathscr G''=M$, then $\alpha_t\rightarrow Id$ uniformly on $(M)_1$.
\end{lemma}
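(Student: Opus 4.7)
The strategy is a convexity and density argument. Fix $\epsilon > 0$. By hypothesis, choose $\delta > 0$ so that $\|\alpha_t(u) - u\|_2 < \epsilon/3$ for every $u \in \mathscr G$ and every $|t| < \delta$. The first step is to propagate this uniform bound to the absolute convex hull of $\mathscr G$: if $y = \sum_{i=1}^n c_i u_i$ with $u_i \in \mathscr G$ and $\sum_i |c_i| \leq 1$, then the linearity of $\alpha_t$ together with the triangle inequality gives $\|\alpha_t(y) - y\|_2 \leq \sum_{i=1}^n |c_i|\, \|\alpha_t(u_i) - u_i\|_2 < \epsilon/3$ for all $|t| < \delta$.

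The second step extends this bound from the absolute convex hull to all of $(M)_1$. Since $\mathscr G$ is a group of unitaries with $\mathscr G'' = M$, the span $A := \mathrm{span}(\mathscr G)$ is a unital $\ast$-subalgebra of $M$ that is $\sigma$-strongly dense in $M$. Kaplansky's density theorem then provides, for any $x \in (M)_1$, an approximant $y$ in the absolute convex hull of $\mathscr G$ with $\|y\|_\infty \leq 1$ and $\|y-x\|_2 < \epsilon/3$. Since $\alpha_t$ is a trace-preserving $\ast$-automorphism, it restricts to an $L^2$-isometry on $M$, and the triangle inequality yields $\|\alpha_t(x) - x\|_2 \leq \|\alpha_t(x-y)\|_2 + \|\alpha_t(y) - y\|_2 + \|y-x\|_2 = 2\|x-y\|_2 + \|\alpha_t(y)-y\|_2 < \epsilon$, giving the desired uniform convergence on $(M)_1$.

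\textbf{Main obstacle.} The delicate point is ensuring that the Kaplansky approximant $y$ may in fact be chosen in the absolute convex hull of $\mathscr G$, rather than as an arbitrary element of $(A)_1$. A generic element of $(A)_1$ is a finite linear combination $\sum c_i u_i$ for which $\sum|c_i|$ may be much larger than $\|y\|_\infty$, in which case the first-step estimate degrades. In the setting of interest where $\mathscr G = \mathcal U(\mathcal P)\mathcal U(\mathcal Q)$ and $M = \mathcal P \bar\otimes \mathcal Q$, this is mitigated by the tensor-product structure, which produces Kaplansky approximants of the form $\sum p_i q_i$ with $p_i \in \mathcal U(\mathcal P)$ and $q_i \in \mathcal U(\mathcal Q)$; one then applies a Russo-Dye type convex decomposition to realize each such approximant as a convex combination of elements of $\mathscr G$, bringing us into the regime covered by the first step.
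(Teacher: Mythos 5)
Your first step is sound, but the second step contains a genuine gap --- precisely the one you flag as the ``main obstacle,'' and it cannot be repaired along the lines you sketch. Kaplansky's density theorem produces an approximant in the unit ball of the $\ast$-algebra $\mathrm{span}(\mathscr G)$ with no control on the $\ell^1$-norm $\sum_i |c_i|$ of its coefficients over $\mathscr G$, so there is no reason it should lie in the absolute convex hull of $\mathscr G$. Worse, the $\|\cdot\|_2$-closure of that absolute convex hull can be a \emph{proper} subset of $(M)_1$: take $M=L(\mathbb Z)\cong L^\infty(\mathbb T)$ and $\mathscr G=\{\lambda_n\}_{n\in\mathbb Z}$. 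Then the absolute convex hull of $\mathscr G$ corresponds to trigonometric polynomials $f$ with $\|\hat f\|_{\ell^1}\le 1$; its $\|\cdot\|_2$-closure is the unit ball of the Wiener algebra (closedness follows from Fatou applied to the Fourier coefficients), which omits every $f\in (L^\infty(\mathbb T))_1$ with $\|\hat f\|_{\ell^1}=\infty$. So the density you need fails in the generality in which the lemma is stated. Your proposed mitigation for $\mathscr G=\mathcal U(\mathcal P)\mathcal U(\mathcal Q)$ does not close the gap either: Russo--Dye expresses each $a_ib_i$ as $\|a_i\|\|b_i\|$ times a convex combination of elements of $\mathscr G$, so the bound you obtain for $y=\sum_i a_ib_i$ involves $\sum_i\|a_i\|\,\|b_i\|$, which is not controlled by $\|y\|_\infty\le 1$ --- the same $\ell^1$ problem in disguise.

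The paper's proof avoids quantitative approximation altogether and instead uses Popa's averaging trick. Let $K$ be the $\|\cdot\|_2$-closed convex hull of $\{\alpha_t(u)u^{\ast}:u\in\mathscr G\}$ and let $\xi_0\in K$ be its unique element of minimal $\|\cdot\|_2$-norm. Since $\alpha_t(v)\xi_0 v^{\ast}\in K$ has the same norm for every $v\in\mathscr G$, uniqueness forces $\alpha_t(v)\xi_0=\xi_0 v$ for all $v\in\mathscr G$, and this intertwining identity extends to all $x\in M$ because $\mathscr G''=M$; this is where the generation hypothesis enters, qualitatively rather than through a norm estimate. If $\|\alpha_t(u)-u\|_2\le\epsilon$ on $\mathscr G$, then $\|\alpha_t(u)u^{\ast}-1\|_2\le\epsilon$, hence every element of $K$, in particular $\xi_0$ and (after polar decomposition) a unitary $v_0$ satisfying $\alpha_t(x)v_0=v_0x$, lies within $\epsilon$ of $1$ in $\|\cdot\|_2$; writing $\alpha_t(x)-x=\alpha_t(x)(1-v_0)+(v_0x-xv_0)+ (v_0-1)x + (\alpha_t(x)v_0 - v_0 x)$ then gives $\|\alpha_t(x)-x\|_2<2\epsilon$ uniformly on $(M)_1$. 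To salvage your outline you would need to replace the Kaplansky/convex-hull step with an argument of this kind; as written, the proof does not go through.
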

\begin{proof}
    Let $K:=\overline{\operatorname{Co}\{\alpha_t(u)u^{\ast}: u\in \mathscr G\}}$. From the uniform convexity of Hilbert space, we have there exists a unique element of minimal norm, i.e., there exists $\xi_0\in K$ such that $\|\xi_0\|_2$ is minimal. Observe that $\alpha_t(v)\xi_0v^{\ast}\in K$ for all $v\in \mathscr G$. Indeed, for all $\xi=\sum_i \lambda_i\alpha_t(u_i)u_i^{\ast}$, we have $\alpha_t(v)\xi v^{\ast}=\sum_i\lambda_i\alpha_t(vu_i)(vu_i)^{\ast}$. Since $\|\alpha_t(v)\xi_0v^{\ast}\|_2=\|\xi_0\|_2$, by uniqueness, we have $\alpha_t(v)\xi_0v^{\ast}=\xi_0$ for all $v\in \mathscr G$. Since every $x \in M$ is a linear combination of four unitaries, we get $\alpha_t(x)\xi_0=\xi_0x$ for all $x\in M$. Let $\epsilon >0$ and observe that if $\|\alpha_t(u)-u\|_2\leq \epsilon $, then $\|\alpha_t(u)u^{\ast}-1\|_2\leq \epsilon$. From polar decomposition this further implies that $\alpha_t(x)v_0=v_0x$ for all $x\in M$ and for $v_0$ satisfying $\|v_0-1\|_2<\epsilon$. The following computation then concludes the result;
    $$\|\alpha_t(x)-x\|_2\leq \| \alpha_t(x)v_0+\alpha_t(x)(1-v_0)-v_0x-(1-v_0)x\|_2<2\epsilon.$$
\end{proof}

\begin{lemma}\label{bounded-cocycle}
   If $\alpha_t\rightarrow Id$ on $(M)_1=(L(G))_1$, then the 1-cocycle $c$ is bounded.
\end{lemma}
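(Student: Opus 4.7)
The plan is to reduce the statement to a direct calculation on the unitaries $\{u_g : g \in G\}\subseteq (M)_1$. Since uniform convergence $\alpha_t \to \mathrm{Id}$ on $(M)_1$ in particular gives uniform convergence on the set of group unitaries, it suffices to show that $\|\alpha_t(u_g) - u_g\|_2$ controls $\|c(g)\|$ in a way that yields a uniform bound on the cocycle.

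First, I would unpack the definition of the deformation on a group unitary. By definition $\alpha_t(u_g) = \omega(tc(g))u_g$, and since $u_g$ is a unitary commuting with the trace from the right,
\[
\|\alpha_t(u_g) - u_g\|_2^2 \;=\; \|(\omega(tc(g)) - 1)u_g\|_2^2 \;=\; \|\omega(tc(g)) - 1\|_2^2.
\]
Next, I would use the four characterizing identities of the Gaussian functor recalled above. Expanding the square and applying $\omega(\xi)^{\ast} = \omega(-\xi)$, $\omega(\xi)\omega(\eta) = \omega(\xi+\eta)$, and $\tau(\omega(\xi)) = \exp(-\|\xi\|^2)$, one obtains
\[
\|\omega(tc(g)) - 1\|_2^2 \;=\; 2 - 2\exp\!\bigl(-t^2\|c(g)\|^2\bigr).
\]

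With this identity in hand, the rest is immediate. Fix any $\varepsilon > 0$ with $\varepsilon^2 < 2$. By the uniform convergence hypothesis, choose $t > 0$ with $\|\alpha_t(x) - x\|_2 < \varepsilon$ for all $x \in (M)_1$; applied to $x = u_g$ this gives
\[
2 - 2\exp\!\bigl(-t^2\|c(g)\|^2\bigr) \;<\; \varepsilon^2 \qquad \text{for every } g \in G.
\]
Rearranging yields $\|c(g)\|^2 < -t^{-2}\log(1 - \varepsilon^2/2)$ uniformly in $g$, so $c$ is bounded.

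There is essentially no obstacle here beyond correctly executing the Gaussian computation; the only mild point is that the four Gaussian identities must be applied to the difference $\omega(tc(g)) - 1 = \omega(tc(g)) - \omega(0)$, after which the trace formula gives the exponential. Everything else is a one-line deduction from uniform convergence on $(M)_1$.
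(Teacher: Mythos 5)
Your proof is correct and follows essentially the same route as the paper: reduce to the group unitaries, compute $\|\alpha_t(u_g)-u_g\|_2^2 = 2-2\tau(\omega(tc(g))) = 2-2e^{-t^2\|c(g)\|^2}$ via the Gaussian identities, and rearrange. (Your final bound $\|c(g)\|^2 < -t^{-2}\log(1-\varepsilon^2/2)$ is in fact the correctly simplified version of the expression appearing in the paper.)
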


\begin{proof}
    Fix $\epsilon >0$, and choose $t$ such that $\|\alpha_t(u_g)-u_g\|_2^2\leq \epsilon^2 $ for all $g \in G$. Then for all $g\in G$,
  \begin{align*}
        & 2-2\tau(\alpha_t(u_g)u_g^{\ast})\leq \epsilon^2\\
        \Rightarrow & 2-\epsilon^2\leq 2Re(\tau(\omega(tc(g))))\\
        \Rightarrow & 2-\epsilon^2\leq 2e^{-\|tc(g)\|^2}\\
        \Rightarrow & \|c(g)\|^2\leq \frac{\ln{\frac{2-\epsilon^2}{2}}}{t}.
    \end{align*}
\end{proof}

Recall that a von Neumann subalgebra $N \subset M$ is said to be \textit{weakly bicentralized} in $M$ if $_ML^2\langle M, e_N \rangle_M \prec {_ML^2(M)_M}$, which by \cite[Lemma 3.4]{BannonFull2020} is always satisfied when $N = (N' \cap M^\omega)' \cap M$ for some ultrafilter $\omega \in  \beta\mathbb{N} \setminus \mathbb{N}$. Then the next lemma holds when $\mathcal P$ is amenable relative to $N$ inside $M$.

\begin{lemma}[c.f.~\cite{isono2021tensor} Lemma 5.2]\label{lemma:isono}
    Let $M$ be II$_1$ factor. Suppose $M \cong \mathcal P \bar\otimes \mathcal Q,$ $\mathcal P$ is a full factor, $N \subset M$ is II$_1$ subfactor that is weakly bicentralized in $M$, and $_{\mathcal P}L^2(\mathcal P)_{\mathcal P} \prec  {_{\mathcal P}L^2\langle M, e_N\rangle_{\mathcal P}}$ as $\mathcal P$-$\mathcal P$ bimodules. Then $\mathcal P \prec_M N.$ 
\end{lemma}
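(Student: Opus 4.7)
I would prove this by contradiction, using Popa's intertwining-by-bimodules framework. Suppose $\mathcal P \not\prec_M N$; then Popa's criterion supplies a net $(u_n)_n \subset \mathcal U(\mathcal P)$ with $\|E_N(x u_n y)\|_2 \to 0$ for all $x, y \in M$. Translating this decay into the basic construction, the standard identity
\[
\langle u_n(x e_N y), x' e_N y' \rangle_{Tr} = \tau\bigl(y'^{*}\, E_N((x')^* u_n x)\, y\bigr),
\]
together with density of $\{x e_N y : x, y \in M\}$ in $L^2\langle M, e_N\rangle$ and the contractivity of each $u_n$, shows that $u_n \to 0$ in the weak operator topology on $L^2\langle M, e_N\rangle$, both when acting from the left and from the right.

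On the other hand, the weak containment hypothesis furnishes, for each finite $F \subset \mathcal P$ and $\epsilon > 0$, vectors $\xi_1, \ldots, \xi_k \in L^2\langle M, e_N\rangle$ with
\[
\Bigl|\tau(ab) - \sum_{i=1}^k \langle a \xi_i b, \xi_i\rangle_{Tr}\Bigr| < \epsilon \qquad\text{for all } a, b \in F.
\]
Letting $F$ exhaust $\mathcal P$ and $\epsilon \to 0$ produces an asymptotically $\mathcal P$-central net of tuples with total norm tending to $1$. The decisive move is to upgrade this asymptotic centrality to a genuine $\mathcal P$-central unit vector $\xi^{\infty}$ in an ultrapower of a suitable amplification of $L^2\langle M, e_N\rangle$ satisfying $\langle a \xi^{\infty} b, \xi^{\infty}\rangle = \tau(ab)$ for all $a, b \in \mathcal P$. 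Both standing hypotheses are essential here: the weakly bicentralized condition guarantees ${}_M L^2\langle M, e_N\rangle_M \prec {}_M L^2(M)_M$, which transports the $\mathcal P$-$\mathcal P$ bimodule structure onto that of $L^2(\mathcal P) \bar\otimes L^2(\mathcal Q)$, while the fullness of $\mathcal P$ supplies, via Connes' spectral gap characterization, the rigidity needed to convert asymptotic centrality into genuine centrality in the ultrapower.

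Once $\xi^{\infty}$ is in hand, the $\mathcal P$-$\mathcal P$ bimodular isometric embedding $L^2(\mathcal P) \hookrightarrow (L^2\langle M, e_N\rangle)^{\omega}$ induced by $\xi^{\infty}$, combined with the weak decay of $(u_n)$ from the first step, yields the contradiction. On one hand, centrality together with the tracial property gives $\langle u_n \xi^{\infty}, \xi^{\infty}\rangle = \tau(u_n)$ for every $n$; on the other hand, the weak decay forces these inner products to vanish in a manner incompatible with the bimodular embedding (in particular with testing against products $u_n^* u_n$ and exploiting unitarity). The main obstacle, and the real technical heart of the argument, is the central-vector extraction step in the middle paragraph: this is exactly where fullness of $\mathcal P$ and the weakly bicentralized hypothesis must be combined carefully, as in the proof of \cite[Lemma 5.2]{isono2021tensor}.
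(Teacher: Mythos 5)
Your overall strategy is the paper's: use the hypothesis ${}_{\mathcal P}L^2(\mathcal P)_{\mathcal P}\prec{}_{\mathcal P}L^2\langle M,e_N\rangle_{\mathcal P}$ together with the reverse weak containment coming from weak bicentralization (restrict ${}_ML^2\langle M,e_N\rangle_M\prec{}_ML^2(M)_M$ to $\mathcal P$-$\mathcal P$ bimodules and note $L^2(M)$ is a multiple of $L^2(\mathcal P)$ there), then invoke fullness of $\mathcal P$ to upgrade the weak containment of the trivial bimodule. Your first step (the net $(u_n)$ and its weak decay on the basic construction) is also fine. The gap is in where your upgraded central vector lives, and consequently in the closing contradiction.

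A genuine $\mathcal P$-central tracial unit vector in an \emph{ultrapower} $(L^2\langle M,e_N\rangle^{\oplus\infty})^{\omega}$ is exactly equivalent to the existence of asymptotically central, asymptotically tracial vectors in $L^2\langle M,e_N\rangle^{\oplus\infty}$, i.e.\ to the weak containment you started from; producing it needs neither fullness nor weak bicentralization, and it only encodes relative amenability of $\mathcal P$ with respect to $N$, which is well known not to imply $\mathcal P\prec_M N$ (take $\mathcal P$ diffuse amenable and $N=\mathbb C$: the coarse bimodule admits such an ultrapower central vector while no corner of $\mathcal P$ embeds in $\mathbb C$). Correspondingly, your final step involves an illegitimate interchange of limits: writing $\xi^{\infty}=(\xi_k)_k$, centrality gives $\lim_{k\to\omega}\langle u_n\xi_k u_n^{*},\xi_k\rangle=1$ for every $n$, whereas the weak decay gives $\lim_n\langle u_n\eta u_n^{*},\eta\rangle=0$ only for each \emph{fixed} $\eta$ (by approximating $\eta$ by finite sums $\sum_i x_ie_Ny_i$), with no uniformity over the representing sequence $(\xi_k)$. (Note also that $\langle u_n\xi^{\infty},\xi^{\infty}\rangle=\tau(u_n)\to 0$ is not by itself contradictory, since $|\tau(u_n)|\leq\|E_N(u_n)\|_2\to 0$ for the intertwining net anyway.) What fullness actually buys --- and this is the content of \cite[Proposition 3.2]{BannonFull2020}, which the paper cites --- is that once ${}_{\mathcal P}L^2\langle M,e_N\rangle_{\mathcal P}$ is weakly \emph{equivalent} to ${}_{\mathcal P}L^2(\mathcal P)_{\mathcal P}$, the trivial bimodule is honestly \emph{contained} in ${}_{\mathcal P}L^2\langle M,e_N\rangle_{\mathcal P}$: there is a nonzero $\mathcal P$-central vector $\xi$ in the basic construction itself, with no ultrapower. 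From such a fixed $\xi$ the conclusion $\mathcal P\prec_M N$ is immediate from Popa's criterion (for instance, $\xi\xi^{*}$ is a positive finite-trace element of $\langle M,e_N\rangle$ commuting with $\mathcal P$), or alternatively your weak-decay computation does go through against this fixed vector. So the repair is to land the central vector in $L^2\langle M,e_N\rangle$ rather than in its ultrapower; that is precisely the nontrivial step your sketch elides.
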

\begin{proof}
    This is implicit in the proof of \cite[Lemma 5.2]{isono2021tensor}. We reproduce the relevant portion for the sake of completeness. 
    
    Given that $N$ is weakly bicentralized in $M$ and $_{\mathcal P}L^2(M)_{\mathcal P}$ is a multiple of $_{\mathcal P}L^2(\mathcal P)_{\mathcal P}$, we have that $_{\mathcal P}L^2(\langle M, e_N\rangle )_{\mathcal P}\prec {_{\mathcal P}L^2(\mathcal P)_{\mathcal P}}$ as $\mathcal P$-$\mathcal P$ bimodules. This along with the assumption now implies that  $_{\mathcal P}L^2(\mathcal P)_{\mathcal P}$ is weakly equivalent to $_{\mathcal P}L^2(\langle M, e_N\rangle )_{\mathcal P}$. Since $\mathcal P$ is full, \cite[Proposition 3.2]{BannonFull2020} implies that $_{\mathcal P}L^2(\mathcal P)_{\mathcal P}$ is contained in $_{\mathcal P}L^2(\langle M,e_N\rangle)_{\mathcal P}$ as $\mathcal P$-$\mathcal P$ bimodules, and hence $\mathcal P\prec_M N$ by \cite{Popa06StrongRigidityI}.
\end{proof}

The following lemma is a slight modification of \cite[Theorem 6.16]{popavaes2008} whose proof follows almost exactly.


\begin{lemma}[c.f.~\cite{popavaes2008} Theorem 6.16]\label{lemma:intersec}
    Let $G$ be a countable ICC group and $H < G$ an ICC subgroup. Set $M:=L(G)$ and let $B \subset M$ be a von Neumann subalgebra. Suppose that the quasi-normalizer of $B$ in M is dense in M. 
    If $B \prec_M L(H)$, then $B \prec_M L(H \cap gHg^{-1})$ for all $g \in G$. Repeating the procedure, we get in particular that 
    \[ B \prec_M L\Bigg( \bigcap\limits_{i=1}^ng_iHg_i^{-1}\Bigg)\]
    for all $g_1, \dots, g_n \in G$.
\end{lemma}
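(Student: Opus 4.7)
The plan is to adapt the proof of \cite[Theorem 6.16]{popavaes2008} essentially verbatim. It suffices to establish the base case $B \prec_M L(H \cap gHg^{-1})$ for a single fixed $g \in G$; the statement for a finite intersection then follows by iteratively applying the base case to the new (smaller) subgroup obtained at each step, combined with the monotonicity of $\prec_M$ in the target subalgebra (enlarging the target preserves intertwining), since iterates of the form $H \cap g_1 H g_1^{-1} \cap g_2 H g_2^{-1} \cap \cdots$ are contained in $\bigcap_{i=1}^n g_i H g_i^{-1}$.

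For the base case, from $B \prec_M L(H)$ one first extracts a non-zero $B$-$L(H)$ sub-bimodule $\mathcal{K} \subset L^2(M)$ of finite right $L(H)$-dimension, equivalently a non-zero finite-trace projection $p_1 \in B' \cap \langle M, e_{L(H)}\rangle$. Fix $g \in G$. The quasi-normalizer density hypothesis allows one to approximate $u_g \in M$ by a sequence $w_n$ in the quasi-normalizer of $B$; for each such $w_n$ there exist finitely many $w_i^{(n)} \in M$ with $w_n B \subseteq \sum_i B w_i^{(n)}$. This finite-index condition ensures that $w_n \mathcal{K}$ is, up to finite ampliation on the left, a $B$-sub-bimodule into $w_n L(H) w_n^{*}$ of finite right dimension. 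Passing to a weak limit as $w_n \to u_g$ and compressing suitably, a non-trivial corner of $u_g \mathcal{K}$ is a $B$-sub-bimodule into $L(gHg^{-1}) = u_g L(H) u_g^{*}$ of finite right dimension, which yields $B \prec_M L(gHg^{-1})$ with witness $p_2 \in B' \cap \langle M, e_{L(gHg^{-1})}\rangle$.

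The final step combines the two intertwinings into one targeting the intersection. The key observation is that the Jones projections $e_{L(H)}$ and $e_{L(gHg^{-1})}$ commute in $\mathcal{B}(L^2(M))$; indeed, both are diagonal with respect to the canonical orthonormal basis $\{u_m\}_{m \in G}$ of $L^2(M)$, and their product equals $e_{L(H) \cap L(gHg^{-1})} = e_{L(H \cap gHg^{-1})}$ (as the $\ell^2$-intersection of the two corresponding closed subspaces is spanned by $\{u_m : m \in H \cap gHg^{-1}\}$). Consequently, a suitable meet (or compressed product) of corners of $p_1$ and $p_2$ gives a non-zero finite-trace $B$-central projection in $\langle M, e_{L(H \cap gHg^{-1})}\rangle$, yielding $B \prec_M L(H \cap gHg^{-1})$.

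The main obstacle is the bimodule-translation step in the second paragraph: one must carefully track how the finite right $L(H)$-dimensionality of $w_n \mathcal{K}$ survives the weak limit $w_n \to u_g$ and how the $B$-bimodularity is recovered after compression. This is exactly where the quasi-normalizer density is essential, as it provides the uniform finite-index control needed for both the compression and the weak-limit arguments to succeed; the detailed verification proceeds via an approximate orthonormal-basis argument over $L(H)$, exactly as in \cite{popavaes2008}.
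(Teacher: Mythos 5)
There is a genuine gap, and it sits precisely where the real work of the lemma lies: the final ``combination'' step. Knowing $B \prec_M L(H)$ and $B \prec_M L(gHg^{-1})$ does \emph{not} in general imply $B \prec_M L(H \cap gHg^{-1})$, even though the Jones projections $e_{L(H)}$ and $e_{L(gHg^{-1})}$ do commute and multiply to $e_{L(H\cap gHg^{-1})}$ on $L^2(M)$. The two witnesses $p_1$ and $p_2$ live in different basic constructions, and the intertwining bimodules they encode may be ``located'' at disjoint translates, so their product or meet can vanish. A concrete counterexample to your combination step: take $H$ infinite and malnormal in $G$ and set $B = L(H)$. Then $B \prec_M L(H)$ trivially, and $B \prec_M L(gHg^{-1}) = u_g L(H)u_g^*$ is witnessed by $x = u_g$, $y = u_g^*$ in Popa's criterion; yet $H \cap gHg^{-1} = \{e\}$ for $g \notin H$, and a diffuse $B$ cannot embed into $\mathbb{C}$. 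Note that your argument for the combination step makes no use of the quasi-normalizer density hypothesis, which this example shows is indispensable (there the quasi-normalizer of $L(H)$ is just $L(H)$ itself). Incidentally, your second paragraph is also unnecessary: $B \prec_M L(gHg^{-1})$ follows immediately from $B \prec_M L(H)$ by conjugating the witnessing partial isometry by $u_g$, with no approximation argument needed.

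The paper's actual proof (following Popa--Vaes) is a contradiction argument in which the quasi-normalizer enters at the intersection step itself. Assuming $B \prec_M L(H)$ with witness $w$ and $\theta: B \to M_m(\mathbb{C})\otimes L(H)$ but $B \not\prec_M L(H^g)$ where $H^g = H\cap gHg^{-1}$, one extracts unitaries $v_i \in B$ with $\|E_{L(H^g)}(a^*v_i b)\|_2 \to 0$ for all $a,b \in M$. A coset decomposition $H \cap gHKt = \sqcup_{s\in L} H^g s$ then shows $\|E_{L(H)}((1\otimes u_g)w^*v_i x)\|_2 \to 0$ for every $x \in M$. Finally, for any self-intertwiner $a$ of $B$ one has $v_i a(1\otimes w) = a(1\otimes w)(\mathrm{id}\otimes\theta)\alpha(v_i)$ with $(\mathrm{id}\otimes\theta)\alpha(v_i)$ valued in $L(H)$, so the quantity $\|E_{L(H)}((1\otimes u_g)w^* v_i a(1\otimes w))\|_2$ is independent of $i$ and hence zero; density of the span of matrix coefficients of self-intertwiners (this is exactly where quasi-normalizer density is used) forces $E_{L(H)}((1\otimes u_g)w^*xw) = 0$ for all $x\in M$, contradicting $w \neq 0$. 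You would need to replace your third paragraph with an argument of this type; the commuting-projections observation alone cannot close the gap.
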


We remark that \cite[Theorem 6.16]{popavaes2008} is stated for $G$ acting strictly outerly on a finite von Neumann algebra $(A,\tau)$. For our use, we need the conclusion to hold for $A=\mathbb{C}$ and $M = L(G)$, meaning when the action is not outer. However, strict outerness is only used to control where the central projections are located. Therefore, the proof of \cite[Theorem 6.16]{popavaes2008} goes through by replacing ``non-zero central projection'' with 1 when it occurs. We now include the proof for completeness. 

\begin{proof}
    First consider the set of self-intertwiners for $B$:
    \begin{align*}
        \mathcal{I}_n := \{a \in M_{1,n}(\mathbb{C}) \otimes M | & a \: \text{is a partial isometry for which there exists a possibly } \\
        & \text{non-unital, *-homomorphism} \: \alpha : B \rightarrow M_n(\mathbb{C}) \otimes B  \\
        & \text{s.t.} \: xa = a\alpha (x) \text{for all} \: x \in B \}.
    \end{align*}

For each $n \in \mathbb{N}$ the matrix coefficients of elements of $\mathcal{I}_n$ linearly span a subalgebra which is dense in $M$ because the quasi-normalizer of $B$ in $M$ is dense. 

Now to seek a contradiction, fix $g \in G$ and suppose that 
\[ B \prec_M L(H) \: \text{and} \: B \not \prec L(H \cap gHg^{-1}). \]

Then take a non-zero partial isometry $w \in M_{1,m}(\mathbb{C}) \otimes M$ and a, possibly non-unital, *-homomorphism $\theta: B \rightarrow M_m (\mathbb{C}) \otimes L(H)$ such that $xw = w \theta(x)$ for all $x \in B$. We claim  that there exists $w$ so that the smallest projection $p \in M$ satisfying $w^*w \leq 1 \otimes p$ can be assumed to be arbitrarily close to 1. To see this, take $w$ so that $w(1\otimes 1)\neq 0$.
One take direct sums of $w$'s and multiply $w$ on the right by elements of $L(H)$. 

Set $H^g := H \cap gHg^{-1}$. Since we assumed $B \not \prec L(H^g)$, take a sequence of unitaries $v_i \in B$ such that 
\[ \big\|E_{L(H^g)}(a^*v_ib) \big\|_2 \rightarrow 0 \: \text{for all} \: a,b \in M. \]
For $A \subset M$, use $E_A$ to still mean the conditional expectation $\text{id}\otimes E_A$ of $M_n(\mathbb{C}) \otimes M$ onto $M_n(\mathbb{C}) \otimes A$. We use this same convention for other maps that are extended to matrix spaces. 

\textbf{Claim:} For all $x \in M$, we have 
\[ \Big\|(1\otimes 1)E_{L(H)}\Big((1 \otimes u_g) w^*v_ix) \Big)\Big\|_2 \rightarrow 0.\]

For any set $L \subset G$, we denote by $p_L$ the orthogonal projection of $L^2(M)$ onto the closed linear span of $\{u_g | g \in L \}$. Let $t \in G$ be arbitrary and choose $\epsilon >0$. It is enough to show that 
\begin{equation}\label{eq:intersec}
    \Big\| (1 \otimes 1)E_{L(H)}\Big( (1 \otimes u_g)w^*v_iu_t \Big) \Big\|_2 \leq 3\epsilon 
\end{equation}
 for $i$ sufficiently large. Take a finite subset $K \subset G$ and $w \in M_{1,m}(\mathbb{C}) \otimes \text{Im} \: p_K$ such that $\|w - w_0 \|_2 < \epsilon$. Since $ E_{L(H)}\Big((1 \otimes u_g)w^*v_iu_t\Big) = p_H\Big((1 \otimes u_g)\theta(v_i)w^*u_t \Big) =  p_{H \cap gHKt} \Big( (1 \otimes u_g) \theta (v_i)w^*u_t \Big) $, it follows that 

 \[\left\| (1\otimes 1)E_{L(H)}\Big((1 \otimes u_g)w^*v_iu_t\Big) - (1\otimes 1) p_{H \cap gHKt} \Big( (1 \otimes u_g) \theta (v_i)w^*_0u_t \Big) \right\|_2 < \epsilon\]

 and \[ \left\| (1\otimes 1) p_{H \cap gHKt} \Big( (1 \otimes u_g) \theta (v_i)w^*_0u_t \Big)  - (1\otimes 1) p_{H \cap gHKt} \Big((1 \otimes u_g) w^*v_iu_t \Big) \right\|_2 < \epsilon. \]

Now take a finite set $L \subset G$ such that $H \cap gHKt = \sqcup _{s \in L} H^gs$. Then we get that 

\[ (1\otimes 1)p_{H \cap gHKt} \Big((1 \otimes u_g) w^*v_iu_t \Big) = \sum_{s \in L} (1 \otimes 1) E_{L(H^g)}\Big((1 \otimes u_g) w^*v_iu_{ts^{-1}} \Big)u_s.\]

The choice of $v_i$ implies that 
\[ \left\|\sum_{s \in L} (1 \otimes 1) E_{L(H^g)}\Big((1 \otimes u_g) w^*v_iu_{ts^{-1}} \Big)u_s\right\|_2 < \epsilon\] 
for $i$ sufficiently large. Combining this with the above estimates, we get that

\begin{align*}
    \| (1 \otimes 1)E_{L(H)}\Big( (1 \otimes u_g)w^*v_iu_t \Big) \|_2 & =  \left\|\sum_{s \in L} (1 \otimes 1) E_{L(H^g)}\Big((1 \otimes u_g) w^*v_iu_{ts^{-1}} \Big)u_s\right\|_2 + 2\epsilon \\
    & < 3 \epsilon
\end{align*}
holds for $i$ sufficiently large, which is exactly  \ref{eq:intersec} and proves the claim.

Now let $k \in \mathbb{N}$ and $a \in \mathcal{I}_k$. Take $\alpha: B \rightarrow M_k(\mathbb{C}) \otimes B$ such that $xa = a \alpha (x)$ for all $x \in B$. Note that 
\[ v_ia(1 \otimes w) = a(1\otimes w) (\text{id} \: \otimes \theta)\alpha(v_i).\]
Then using that $(\text{id} \: \otimes \theta)\alpha(v_i) \in M_{km}(\mathbb{C}) \otimes L(H)$ and $\alpha$ is an isometry, it follows that
\begin{align*}
    &\| (1 \otimes 1)E_{L(H)}\Big( (1 \otimes u_g)w^*v_ia(1 \otimes w) \Big) \|_2 \\
    & \quad = \| (1 \otimes 1)E_{L(H)}\Big( (1 \otimes u_g)w^*a(1\otimes w) (\text{id} \: \otimes \theta)\alpha(v_i)\Big)\|_2 \\
    & \quad = \| (1 \otimes 1)E_{L(H)}\Big( (1 \otimes u_g)w^*a(1 \otimes w) \Big) \|_2
\end{align*}
for all $i$. Applying the claim, we conclude that 
\[(1 \otimes 1)E_{L(H)}\Big( (1 \otimes u_g)w^*a(1 \otimes w) \Big) = 0 \quad \text{whenever} \: a \in I_k.\]
Since the matrix coefficients of $a$ for $a \in \mathcal{I}_k,$ $k \geq 1,$ span a strongly dense subset of $M$, it follows that $(1 \otimes 1)E_{L(H)}\Big( (1 \otimes u_g)w^*xw\Big) = 0$ for all $x \in M$. Let $q \in M$ be the smallest projection satisfying $w^*w \leq 1 \otimes q$. It follows that $E_{L(H)}(u_gqxq) = 0$ for all $x \in M$. Since $q$ can be taken arbitrarily close to 1, we have a contradiction. 
\end{proof}

For our main theorem we need a weaker version of malnormality. We say a subgroup $H$ of $G$ is \textit{weakly malnormal} if for some $n \in \mathbb{N}$ there exists $g_1,\dots ,g_n$ in $G \backslash H$ such that the subgroup $\cap_{i=1}^n H \cap g_iHg_i^{-1}$ is finite.

The following theorem is inspired by unpublished work of Chifan, Kunnawalkam Elayavalli, and the third author.

\begin{theorem}\label{thm:prime}
     Let $G$ be countable ICC group such that $L(G)$ does not have property Gamma. Suppose there exists $H\leq G$ such that $H$ is weakly malnormal and $L(H)$ is weakly bicentralized in $L(G)$. If $G$ admits an unbounded 1-cocycle into the quasi-regular representation $\ell^2(G/H)$, then $L(G)$ is prime.
\end{theorem}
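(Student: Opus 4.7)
The plan is to prove primeness by contradiction. Suppose $M := L(G) \cong \mathcal P \bar\otimes \mathcal Q$ with $\mathcal P, \mathcal Q$ both II$_1$ factors. Since $M$ is full (lacks property Gamma) and any non-trivial central sequence in $\mathcal P$ (resp.\ $\mathcal Q$) would tensor with $1$ to yield one in $M$, both tensor factors inherit fullness. Let $\alpha_t$ denote the Gaussian s-malleable deformation of $M \subset \tilde M$ associated to the given unbounded 1-cocycle $c : G \to \ell^2(G/H)$.

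The first step is to force one tensor factor to be amenable relative to $L(H)$ inside $M$. If neither $\mathcal P$ nor $\mathcal Q$ were relatively amenable to $L(H)$, then applying Lemma \ref{uniform-convergence-on-factors} twice, once as stated and once with the roles of $\mathcal P$ and $\mathcal Q$ interchanged, yields uniform convergence $\alpha_t \to Id$ on both $(\mathcal P)_1$ and $(\mathcal Q)_1$. Setting $\mathscr G := \mathcal U(\mathcal P) \cup \mathcal U(\mathcal Q)$ and noting $\mathscr G'' = \mathcal P \vee \mathcal Q = M$, Lemma \ref{lemma:convex} promotes this to uniform convergence on all of $(M)_1$, whereupon Lemma \ref{bounded-cocycle} forces $c$ to be bounded, contradicting hypothesis.

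Assume therefore without loss of generality that $\mathcal P$ is amenable relative to $L(H)$ in $M$. Via the GNS construction applied to a $\mathcal P$-central state on $\langle M, e_{L(H)} \rangle$ that is normal on $M$, this is equivalent to the weak containment of $\mathcal P$-$\mathcal P$ bimodules ${}_{\mathcal P}L^2(\mathcal P)_{\mathcal P} \prec {}_{\mathcal P}L^2\langle M, e_{L(H)}\rangle_{\mathcal P}$. Since $\mathcal P$ is full and $L(H)$ is weakly bicentralized in $M$ by hypothesis, Lemma \ref{lemma:isono} delivers $\mathcal P \prec_M L(H)$. Since $\mathcal Q$ commutes with $\mathcal P$ and $\mathcal P \vee \mathcal Q = M$, the quasi-normalizer of $\mathcal P$ in $M$ is dense, so Lemma \ref{lemma:intersec} applies. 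Iterating it with the elements $g_1, \dots, g_n \in G \setminus H$ provided by weak malnormality yields
\[\mathcal P \prec_M L\bigg( H \cap \bigcap_{i=1}^n g_i H g_i^{-1} \bigg).\]
By weak malnormality this intersection is a finite subgroup, so its group von Neumann algebra is finite-dimensional. But $\mathcal P$ is a II$_1$ factor, hence diffuse, and a diffuse algebra admits no corner intertwining into a finite-dimensional subalgebra, giving the desired contradiction.

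The most delicate step I anticipate is the correct handling of the iteration in Lemma \ref{lemma:intersec}: one must carefully check that starting from $\mathcal P \prec_M L(H)$ and successively intertwining with $g_1, \dots, g_n$ (absorbing $g_0 = e$ into the list if needed) produces an intersection containing $H$ itself, so that weak malnormality guarantees finiteness rather than a potentially larger subgroup. The other subtle point is verifying that the characterization of relative amenability used to feed Lemma \ref{lemma:isono} really gives the required weak containment of $\mathcal P$-$\mathcal P$ bimodules (and not merely of $M$-$\mathcal P$ bimodules); this is a standard consequence of the central-state formulation in the preliminaries.
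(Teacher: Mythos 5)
Your argument is correct and follows essentially the same route as the paper: the same dichotomy on relative amenability of the tensor factors, with the non-amenable case killed by the transversality/convexity upgrade to uniform convergence of $\alpha_t$ and boundedness of the cocycle, and the amenable case killed by Lemma \ref{lemma:isono}, Lemma \ref{lemma:intersec}, and weak malnormality. The only slip is that $\mathscr G$ in Lemma \ref{lemma:convex} must be a subgroup of $\mathcal U(M)$, so you should take the product set $\mathcal U(\mathcal P)\mathcal U(\mathcal Q)$ (a group, since the two factors commute) rather than the union; uniform convergence on the product follows from uniform convergence on each factor since $\alpha_t$ is trace-preserving.
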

We briefly remark that if
$H\leq G$ is a subgroup such that $L(H)$ is bicentralized in $L(G)$ (i.e.~ $[L(H)'\cap L(G)]' \cap L(G)=L(H)$), then it is also weakly bicentralized. Indeed, if $[L(H)'\cap L(G)]' \cap L(G)=L(H)$, then $[L(H)'\cap L(G)^\omega]' \cap L(G)=L(H)$ for any non-principal ultrafilter on $\mathbb{N}$.  In Section \ref{sec:PropsofV} we show that there are concrete subgroups of the Higman-Thompson groups $T_d$ and $V_d$ where the hypotheses of Theorem \ref{thm:prime} are realized. 
\begin{proof}
     Assume for the sake of contradiction that $M := L(G) = \mathcal P \bar\otimes \mathcal Q$ where both $\mathcal P$ and $\mathcal Q$ are II$_1$ factors. Then there are only two cases to consider. \\
         \textbf{Case I:} Suppose that $\mathcal P$ and $\mathcal Q$ are respectively not amenable relative to $L(H).$  Then by Lemma \ref{uniform-convergence-on-factors} we have that $\alpha_t \rightarrow Id$ uniformly both on $(\mathcal{Q})_1$ and on $(\mathcal{P})_1$, where $\alpha_t$ is the deformation described at the beginning of Section \ref{sec:defRigPrimenessQuasi}. Then by Lemma~\ref{lemma:convex}, $\alpha_t \rightarrow Id$ uniformly on $(M)_1$, but by Lemma~\ref{bounded-cocycle} this is a contradiction because the cocycle used to construct $\alpha_t$ is unbounded. 
\\
          \textbf{Case II:} Suppose without loss of generality that $\mathcal P$ is amenable relative to $L(H)$ in $M$. Since $M$ does not have property Gamma, using Connes' result \cite[Corollary 2.3]{connes1976classification} we get that  $\mathcal P$ is full. Now given that $\mathcal P$ is amenable relative $L(H)$ in $M$, $_ML^2(\mathcal M)_{\mathcal P} \prec {_ML^2 \langle M, e_{L(H)} \rangle_{\mathcal{P}}}$ as $M$-$\mathcal P$ bimodules, but this also means $_\mathcal{P}L^2(\mathcal P)_\mathcal{P} \prec {_\mathcal{P}L^2 \langle M, e_{L(H)} \rangle_\mathcal{P}}$ as $\mathcal P$-$\mathcal P$ bimodules. Together with the assumption that $L(H)$ is weakly bicentralized in $M$, Lemma~\ref{lemma:isono} now implies $ \mathcal P\prec_M L(H)$. Then, since $\mathcal P$ is regular in $M$, we can apply Lemma~\ref{lemma:intersec} to conclude that for all $g_1,\ldots, g_n \in G$, $P\prec_M L(\cap_{i=1}^n H \cap g_iHg_i^{-1})$. Since $H$ is weakly malnormal in $G$, there exists some $g_1,\ldots, g_n \in G \backslash H$ such that $|\cap_{i=1}^n H \cap g_iHg_i^{-1}| < \infty$. Since $\mathcal P$ is diffuse, this gives a contradiction. 
 \end{proof}

\section{Properties of the Higman-Thompson groups \texorpdfstring{$T_d$ and $V_d$}{Vd}}\label{sec:PropsofV}

The main goal of this section is to prove Theorem \ref{MainThm:Prime} by establishing that the Higman-Thompson groups $T_d$ and $V_d$ satisfy the conditions of Theorem \ref{thm:prime}. Section \ref{sec:Cocycles} is devoted to constructing the relevant cocycle. In Section \ref{sec:Primeness} Theorem \ref{MainThmPropP} and \ref{MainThm:Prime2} are established as corollaries to Lemma \ref{lem:notcoamen}, which proves the corresponding quasi-regular representation is non-amenable.

\subsection{Cocycle Construction}\label{sec:Cocycles}

In \cite{farley2003proper} Farley showed that $V=V_2$ admits a proper cocycle for a quasi-regular representation. Here we extend Farley's construction to $V_d,$ where $d \geq 2$ is an integer. The cocycle for $V_d$ is defined the same as the original, however, there are a few key adjustments needed for the proof to go through. For completeness and clarity, we include all of the lemmas and propositions of Farley's original paper, omitting proofs where no changes are necessary when moving from the dyadic to $d$-adic case. Hughes had previously shown $V_d$ admits a proper cocycle in \cite{hughes2009}.

\begin{definition}
    Let us start with several definitions which are the natural $d$-adic generalizations of the $d = 2$ case. 
    \begin{enumerate}
        \item A \textit{standard $d$-adic interval} is one of the from $[a/d^k, (a+1)/d^k],$ where $k$ and $a$ are integers such that $k \geq 0$ and $0 \leq a \leq d^k-1.$  Note that $d$-adic intervals can also be open or half open.  
        \item A \textit{standard $d$-adic partition} is a finite subset $\{x_0, \dots , x_n \}$ of $[0,1]$ such that $0 = x_0 < x_1 \dots < x_n =1,$ and $[x_i, x_{i+1}]$ is a standard $d$-adic interval for $0 \leq i \leq n-1.$  
        \item We call a standard $d$-adic partition an \textit{interval of a partition} if each $[x_i, x_{i+1})$ is half open for $0 \leq i \leq n-1.$ 
        \item Let $I$ be a standard $d$-adic interval. Then a \textit{standard affine map}, is a map $g: I \rightarrow \mathbb{R},$ such that $g$ is affine (i.e., $g(x) = ax+b$ for appropriate constants $a$ and $b$) and the image of $g$ is a standard $d$-adic interval. 
        \end{enumerate}
\end{definition}

It is important to note that the definition of a standard affine map is the natural generalization of the definition Fraley gave in \cite{farley2021corrigendum}. Farley wrote this corrigendum to the original paper \cite{farley2003proper} after the error and suggested corrections were pointed out by Nicolas Monod and Nansen Petrosyan. As the corrigendum states, the original proper cocycle proof for $d=2$ is not true unless the use of an affine map is replaced with \emph{standard affine map}. To that end, we emphasize the necessity of using standard affine maps in our following proof of the $d \geq 2$ case.

We can now define what we will prove is the proper cocycle for $V_d$. Define $V_{[0, 1/d)}$ to be the subgroup of $V_d$ whose elements act as the identity on $[0,1/d).$ Then define a subset 
$$X = \{gV_{[0, 1/d)} : g \: \text{is standard affine on [0, 1/d)} \}$$
of the homogeneous space $V_d / V_{[0, 1/d)}$. We can now define a map that we will justify is a proper cocycle. Let $c: V_d \rightarrow \ell^{\infty}(V_d / V_{[0, 1/d)})$ be defined by $c(v) = (v-1)\chi_X,$ where $\chi_X$ is the characteristic function of $X.$ As one can check, $c$ satisfies the cocycle condition: 
\begin{align*}v_1c(v_2)+c(v_1) &= v_1(v_2-1)\chi_X + (v_1-1)\chi_X \\ &= (v_1v_2-1)\chi_X \\
&= c(v_1v_2). 
\end{align*}
To argue $c$ is a proper cocycle for the quais-regular representation $\ell^2(V_d/V_{[0,1/d)})$ we need to show, (i) $c(v)$ is in $\ell^2(V_d/V_{[0,1/d)})$ for all $v \in V_d$ and (ii) $c: V_d \rightarrow \ell^2(V_d/V_{[0,1/d)})$ is a proper map.

\begin{proposition}(cf. \cite[Lemma 2.2]{cannon1996introductory})
Let $g \in V_d.$  There exists a standard $d$-adic partition $0 = x_0 < x_1 \dots < x_n =1,$ such that $g$ is standard affine on each interval of this partition and $\{g(x_0), \dots g(x_n)\}$ is a standard $d$-adic partition. 

\begin{proof}

Replace 2 with $d \in \mathbb{N}$ in the proof of  \cite[Proposition 2.1]{farley2003proper}.
\end{proof}

\end{proposition}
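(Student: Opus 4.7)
The plan is to mirror the corrected dyadic argument with $2$ replaced by $d$ throughout. By definition, $g$ has finitely many breakpoints $0 = y_0 < y_1 < \cdots < y_k = 1$, all of them $d$-adic rationals; between consecutive breakpoints $g$ is affine with positive slope $d^{m_i}$ for some $m_i \in \mathbb{Z}$, and every image $z_i := g(y_i)$ is again a $d$-adic rational since $g$ preserves $d$-adic rationals.

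First I would choose a single integer $N$ large enough that (a) every $y_i$ and every $z_i$ lies in $\tfrac{1}{d^N}\mathbb{Z}$, and (b) $N - m_i \geq N_i^\star$ for every $i$, where $d^{N_i^\star}$ is the reduced denominator of $z_i$. Since there are only finitely many indices $i$, such an $N$ exists. I would then take the partition $x_j := j/d^N$ for $0 \leq j \leq d^N$. This is patently a standard $d$-adic partition, every breakpoint of $g$ appears among the $x_j$'s, and hence $g$ is affine on each subinterval $[x_j, x_{j+1}]$.

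The decisive check is that $g|_{[x_j, x_{j+1}]}$ is standard affine, i.e.\ that its image is itself a standard $d$-adic interval. If $[a/d^N, (a+1)/d^N] \subseteq [y_i, y_{i+1}]$ and we write $y_i = p/d^N$, then the image is an interval of length $1/d^{N-m_i}$ with left endpoint
\[
  g(a/d^N) \;=\; z_i + \frac{a - p}{d^{N - m_i}}.
\]
Condition (b) guarantees that $z_i$ itself is an integer multiple of $1/d^{N - m_i}$, so $g(a/d^N)$ is as well, and the image is a standard $d$-adic interval. Since $g$ is a bijection of $[0,1]$, reordering the values $\{g(x_0), \ldots, g(x_{d^N})\}$ increasingly yields a partition of $[0,1]$ whose consecutive intervals are each $g$-images of some $[x_j, x_{j+1}]$, hence are standard $d$-adic; so $\{g(x_0), \ldots, g(x_{d^N})\}$ is a standard $d$-adic partition.

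The one subtle point, and the main (mild) obstacle, is the necessity of condition (b): merely choosing $N$ large enough to express all breakpoints and their images with denominator $d^N$ is \emph{not} sufficient, because that alone does not force the image of a subinterval of length $1/d^N$ to begin at an integer multiple of its own length $1/d^{N-m_i}$. This is precisely the gap that needed to be closed by distinguishing ``standard affine'' from ``affine'' in the dyadic setting, and the same remedy works verbatim in base $d$.
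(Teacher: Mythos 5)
Your proposal is correct and is essentially the paper's proof written out in full: the paper simply defers to Farley's Proposition 2.1 with $2$ replaced by $d$, and your argument (refining by a uniform grid $x_j = j/d^N$ with $N$ large enough that the image of each cell begins at an integer multiple of its own length $1/d^{N-m_i}$) is exactly that argument, including the corrigendum's ``standard affine'' subtlety that you rightly flag as the decisive point.
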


Now let $\mathcal{P}_1$ and $\mathcal{P}_2$ be two standard $d$-adic partitions of the unit interval and let $\phi$ be a bijection between the set of intervals of $\mathcal{P}_1$ and $\mathcal{P}_2.$  Then we call $T = (\mathcal{P}_1, \mathcal{P}_2, \phi)$ a \textit{representative triplet}.  The triplet $T$ determines an element $g_T$ of $V_d.$  We see that $g_T$ maps each interval $I$ of the partition $\mathcal{P}_1$ onto the interval $\phi(I)$ be an affine homeomorphism.  By the previous proposition, every element of $V_d$ is determined by a representative triplet.

We remark that part (3) of the next lemma is the first significant departure from Farley's $d=2$ proof.

\begin{lemma}\label{dpoint}(cf. \cite{cannon1996introductory}, remarks before Lemma 2.2, on page 220) Let $S$ be a standard $d$-adic partition. 
\begin{enumerate}
    \item If $(a/d^n, (a+1)/d^n) \cap S \neq \emptyset,$ where $(a/d^n, (a+1)/d^n)$ is a standard $d$-adic interval, then $a/d^n, (a+1)/d^n \in S$  
    \item If $b/d^k \in S$ and $b$ is not divisible by d, then $S\cap (0,1)$ contains at least $k$ elements. 
    \item If the standard $d$-adic interval $(c/d^l, (c+1)/d^l)$ contains an element of $S,$ then $(dc+n)/d^{l+1} \in S$ for at least one $n$ such that $1 \leq n < d.$  We say any such $(dc+n)/d^{l+1}$ is a $d^{th}$ point for $(c/d^l, (c+1)/d^l).$
\end{enumerate}

\begin{proof}
    (1) Replace $2$ with $d \in \mathbb{N}$ and ``odd'' with ``not divisible'' in the proof  of \cite[Lemma 2.2 part (1)]{farley2003proper}. %
  (2) Replace $2$ with $d \in \mathbb{N}$ and ``odd'' with ``not divisible'' in the proof  of \cite[Lemma 2.2 part (2)]{farley2003proper}. 

  (3) Choose some $x_j \in S \cap (c/d^l, (c+1)/d^l).$  If $x_j \neq (dc+1)/d^{l+1},$ then $x_j \in (c/d^l, (dc+1)/d^{l+1})$ or $((dc+1)/d^{l+1}, (c+1)/d^l)$.  If $x_j$ is in the first interval, which is a standard $d$-adic interval, then we can conclude $(dc+1)/d^{l+1} \in S$ by part (1). If $x_j$ is in the second interval, then $x_j$ is either contained in one of the standard $d$-adic intervals, $((dc+1)/d^{l+1}, (dc+2)/d^l), ((dc+2)/d^{l+1}, (dc+3)/d^l), \dots ((dc+(d-1))/d^{l+1}, (c+1)/d^l)$ or is one of the endpoints of these intervals.  If $x_j$ is one of the endpoints we are done.  If not, then by another application of part (1), we can conclude $((dc+n)/d^{l+1}) \in S$ for some $1 \leq n < d.$
\end{proof}
    
\end{lemma}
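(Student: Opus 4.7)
The plan is to prove the three parts essentially independently, exploiting the nested structure of standard $d$-adic intervals. Parts (1) and (2) are routine translations of the dyadic arguments in \cite{farley2003proper}. For (1), I would argue that if the open interval $(a/d^n, (a+1)/d^n)$ meets $S$ at some $x_j$, then the two consecutive partition intervals $[x_{j-1},x_j]$ and $[x_j, x_{j+1}]$ are themselves standard $d$-adic, and because two standard $d$-adic intervals either are nested or overlap only at endpoints, they must both sit inside $(a/d^n,(a+1)/d^n)$. Iterating this argument outward walks the partition to the endpoints and forces $a/d^n,(a+1)/d^n\in S$. For (2), I would induct on the depth $k$ of the irreducible fraction $b/d^k$: by (1), the endpoints of the level-$(k-1)$ standard $d$-adic interval containing $b/d^k$ lie in $S$, at least one of these has a smaller (non-divisible-by-$d$) denominator than $b/d^k$, and induction produces $\geq k$ distinct elements of $S\cap(0,1)$.

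The substantive content lies in (3), which I would handle by combining the subdivision of $(c/d^l,(c+1)/d^l)$ into its $d$ level-$(l{+}1)$ children with an application of (1). Fix any $x_j \in S\cap (c/d^l,(c+1)/d^l)$; its candidate ``$d$th points'' are precisely the interior breakpoints $(dc+n)/d^{l+1}$ for $1\leq n < d$. If $x_j$ is already one of these, we are done. Otherwise $x_j$ must lie strictly inside a single subinterval $((dc+k)/d^{l+1},(dc+k+1)/d^{l+1})$, which is itself a standard $d$-adic interval; applying part (1) inside this smaller interval puts both of its endpoints into $S$. The remaining point is purely combinatorial: for $k\in\{1,\dots,d-2\}$ both endpoints are interior breakpoints of $(c/d^l,(c+1)/d^l)$, while for the extreme cases $k=0$ and $k=d-1$ exactly one endpoint equals $c/d^l$ or $(c+1)/d^l$ and the other is of the desired form $(dc+n)/d^{l+1}$ with $1\leq n<d$.

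The main obstacle is this last edge analysis for $k=0$ and $k=d-1$; this is precisely the subtlety flagged by \cite{farley2021corrigendum}, namely that one must keep track of the standard $d$-adic combinatorics rather than merely the affine structure, so that the ``boundary'' subintervals do not cause the desired breakpoint to slip out of $S$. Once that small case-check is made, part (3) reduces cleanly to an application of part (1) in a smaller standard $d$-adic interval, and no further adaptation beyond the dyadic setting is required.
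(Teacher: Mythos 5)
Your proposal is correct and follows essentially the same route as the paper: part (3) in both cases reduces to locating $x_j$ in one of the $d$ level-$(l+1)$ children of $(c/d^l,(c+1)/d^l)$ (or at an interior breakpoint) and applying part (1) to that child --- you organize this as a single case split over all $d$ children, whereas the paper first splits off the leftmost child and then subdivides the remaining $d-1$, but the content is identical, including the edge analysis showing that even the outermost children contribute a breakpoint of the form $(dc+n)/d^{l+1}$ with $1\leq n<d$. Parts (1) and (2) are in both treatments direct adaptations of Farley's dyadic argument.
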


\begin{proposition}\label{prop}(cf. \cite{cannon1996introductory}, page 221).  Any $g \in V_d$ is determined by a unique representative triplet $T = (\mathcal{P}_1, \mathcal{P}_2, \phi)$ having the following property:
\begin{itemize}
    \item[*] If $I = [a/d^k, (a+1)/d^k)$ is any standard $d$-adic interval, then the restriction of $g$ to $I$ is standard affine if and only if the interior of $I$ contains no member of $\mathcal{P}_1.$\label{item:propertyStar}
\end{itemize}

\begin{proof}
    Replace $2$ with $d \in \mathbb{N}$ and ``affine'' with ``standard affine'' in the proof  of \cite[Proposition 2.3]{farley2003proper}.  
\end{proof}
\end{proposition}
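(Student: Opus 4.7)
The plan is to produce $\mathcal{P}_1$ intrinsically from $g$: declare $\mathcal{P}_1$ to be the partition whose intervals are the inclusion-maximal standard $d$-adic subintervals of $[0,1)$ on which $g$ restricts to a standard affine map. Then $\mathcal{P}_2 := g(\mathcal{P}_1)$ and $\phi$ are forced, and both existence and uniqueness will follow from the observation that this description depends only on $g$.

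For well-definedness and existence, I would first invoke the previous proposition (the $d$-adic analogue of \cite[Proposition 2.1]{farley2003proper}) to obtain some standard $d$-adic partition $\mathcal{P}_0$ on whose intervals $g$ is standard affine. For any $x\in[0,1)$, the standard $d$-adic subintervals of $[0,1)$ containing $x$ form a chain under inclusion whose largest member is $[0,1)$; within this chain the subfamily on which $g$ is standard affine is nonempty (it contains the $\mathcal{P}_0$-interval of $x$) and has a maximum $I(x)$. Any two standard $d$-adic intervals are either disjoint or nested, so the distinct $I(x)$ are pairwise disjoint, cover $[0,1)$, and their endpoints constitute a standard $d$-adic partition $\mathcal{P}_1$. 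Taking $\mathcal{P}_2 := g(\mathcal{P}_1)$ (a standard $d$-adic partition since each $I(x)$ maps to a standard $d$-adic interval) and $\phi(I(x)) := g(I(x))$ yields a representative triplet for $g$.

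The heart of property ($\ast$) is the following computation. If $g$ is standard affine on $J=[p/d^q,(p+1)/d^q)$ with image $[c/d^l,(c+1)/d^l)$, then $g(x) = d^{q-l}x + (c-p)/d^l$ on $J$, and for any standard $d$-adic subinterval $I=[a'/d^k,(a'+1)/d^k)\subseteq J$ (forcing $k\geq q$) one gets
\[g(a'/d^k) \;=\; \frac{a' + (c-p)\,d^{k-q}}{d^{k+l-q}},\]
an integer over $d^{k+l-q}$, so $g(I)$ is itself a standard $d$-adic interval and $g|_I$ is standard affine. Using this: if the interior of a standard $d$-adic interval $I$ contains no member of $\mathcal{P}_1$, that interior lies in a single partition interval $(x_j,x_{j+1})$, which forces $I\subseteq[x_j,x_{j+1})$, so $g|_I$ is standard affine by the computation applied to $J=[x_j,x_{j+1})$. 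Conversely, if $g|_I$ is standard affine then by maximality $I\subseteq I(x)$ for any $x\in I$, so the interior of $I$ meets no $\mathcal{P}_1$-point.

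For uniqueness, any triplet satisfying ($\ast$) must have its intervals equal to the maximal standard $d$-adic intervals on which $g$ is standard affine: each interval of $\mathcal{P}_1$ has no interior $\mathcal{P}_1$-point and is therefore standard affine for $g$ by ($\ast$), while any strictly larger standard $d$-adic subinterval of $[0,1)$ containing it would place an endpoint $x_i\in\mathcal{P}_1\cap(0,1)$ in its interior, again violating ($\ast$) (the only case in which no such extension is possible is $[x_i,x_{i+1})=[0,1)$, which is trivially maximal). Since this characterization is intrinsic to $g$, the partition $\mathcal{P}_1$, and with it the whole triplet, is uniquely determined. The main obstacle is the affine computation displayed above: it is precisely here that the correction of \cite{farley2021corrigendum} is needed, since without insisting the image of each piece be a standard $d$-adic interval, $g|_I$ would in general fail to be standard affine even when $g|_J$ is merely affine, and the intrinsic maximal-interval description would collapse.
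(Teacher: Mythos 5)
Your proof is correct and takes essentially the same route as the paper, which simply defers to Farley's Proposition 2.3 (with $2$ replaced by $d$ and ``affine'' by ``standard affine''): the canonical triplet is the one whose $\mathcal{P}_1$-intervals are the maximal standard $d$-adic intervals on which $g$ is standard affine. Your explicit computation showing that $g(a'/d^k)=\bigl(a'+(c-p)d^{k-q}\bigr)/d^{k+l-q}$ is an integer multiple of $d^{-(k+l-q)}$ --- so that standard affineness passes to standard $d$-adic subintervals --- is precisely the step where the corrigendum's ``standard affine'' hypothesis is needed, and you have identified and verified it correctly.
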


The following theorem proves that $c$ is a proper cocycle for the quasi-regular representation.  The key difference with Farley's original argument is the functions $f_i$ defined in equation \ref{fi}.  Essentially, we had to recover uniqueness that one loses when moving from a midpoint to general $d$-th points. The rest of the argument is the same but we include it for completeness. Note that $V_d$ and its subgroups were first shown to have the Haagerup property in \cite{hughes2009}. 

\begin{theorem}\label{cocycle}
    For any $v \in V_d,$ we have that (i) $c(v) \in \ell^2(V_d/V_{[0,1/d)}),$ and (ii) the function $c:V \rightarrow \ell^2(V_d/V_{[0,1/d)})$ is a proper cocycle for the quasi-regular representation of $V_d$ on $\ell^2(V_d/V_{[0,1/d)}).$  In particular, $V_d$ (and therefore each of $F_d$ and $T_d$) has the Haagerup property. 

\begin{proof}
Let $v \in V_d$ and let $T = (\mathcal{P}_1, \mathcal{P}_2, \phi)$ be the representative triplet for $v$ satisfying property (*) from Proposition \ref{prop}.  For $i =1,2,$ let 
\[X_{\mathcal{P}_i} = \{gV_{[0,1/d)}: g \: \text{maps} \:[0,1/d) \: \text{standard affinely and} \: g((0,1/d)) \cap \mathcal{P}_i = \emptyset \}.\]
(i) To prove the first assertion, we will use the subspaces $X_{\mathcal{P}_i}$, which is dependent on the element $v \in V_d$, to rewrite the characteristic function defining $c$ in such a way that we can argue $c$ is finitely supported.  

 We first claim the bijection $v: V_d/V_{[0,1/d)} \rightarrow V_d/V_{[0,1/d)}$ induced by left multiplication by $v,$ provides a bijection from $X_{\mathcal{P}_1}$ onto $X_{\mathcal{P}_2}.$  Indeed, suppose $gV_{[0,1/d)} \in X_{\mathcal{P}_1},$ then by definition, $g$ maps $[0, 1/d)$ standard affinely and $g((0,1/d)) \cap \mathcal{P}_1 = \emptyset.$  Next, property (*) implies that $vg$ also maps $[0, 1/d)$ standard affinely and $vg((0,1/d)) \cap \mathcal{P}_2 = \emptyset.$ Which by definition, means $vgV_{[0,1/d)} \in X_{\mathcal{P}_2}.$  A similar argument tells us, if $gV_{[0,1/d)} \in X_{\mathcal{P}_2},$ then $v^{-1}gV_{[0,1/d)}$ is in $X_{\mathcal{P}_1}.$ Thus, $v \cdot X_{\mathcal{P}_1} \mapsto X_{\mathcal{P}_2}$ is a bijection.  This implies that $v \cdot \chi_{X_{\mathcal{P}_1}} - \chi_{X_{\mathcal{P}_2}} = \chi_{v \cdot (X_{\mathcal{P}_1})} - \chi_{X_{\mathcal{P}_2}} = 0.$ 

We use this decomposition to rewrite $c(v):$
 \begin{align}\label{co1}
c(v) = (v-1)\chi_X &= \chi_{v\cdot X}- \chi_X \\ 
&= \chi_{v\cdot X}- \chi_X + \chi_{X_{\mathcal{P}_2}} - \chi_{v \cdot (X_{\mathcal{P}_1})} \\
&=   \chi_{v \cdot (X-X_{\mathcal{P}_1})} - \chi_{X-X_{\mathcal{P}_2}} \label{co}
 \end{align}

Now note an element of $gV_{[0,1/d)}$ of $X$ is determined by the interval $g([0,1/d)),$ because every element of  $V_{[0,1/d)}$ fixes $[0,1/d).$  With that in mind, for $i=1,2$ we define functions 

\begin{equation}\label{fi}
    f_i:X-X_{\mathcal{P}_i} \rightarrow \mathcal{P}_i-\{0, 1/d, 2/d, \dots, (d-1)/d, 1\}
\end{equation}
by sending an element $gV_{[0,1/d)}$ of $X-X_{\mathcal{P}_i}$ to 

\[ \min \Bigg\{ \Big\{ \frac{n}{d}(g(0)+g(1/d)) : 1 \leq n \leq d\Big\} \cap \mathcal{P}_i\Bigg\}. \]

In other words, $f_i$ is sending $gV_{[0,1/d)}$ to the smallest $d^{th}$ point of the standard $d$-adic interval, $g([0,1/d))$. To see $f_i$ is well defined, note how $g([0,1/d))$ contains a member of $\mathcal{P}_i$ by the definition of $X-X_{\mathcal{P}_i},$ and therefore, by Lemma \ref{dpoint} (3), we know $\{ (n/d)(g(0)+g(1/d)) : 1 \leq n \leq d\} \cap \mathcal{P}_i \neq \emptyset$ because $(n/d)(g(0)+g(1/d)) $ corresponded to a $d^{th}$ points of $g([0,1/d))$. Taking the minimum over this intersection now guarantees $f_i$ can only take one value in $\mathcal{P}_i.$  Now $f_i(gV_{[0,1/d)})$ is not $0$ or $1,$ and moreover, it is not in $\{0, 1/d, 2/d, \dots, (d-1)/d, 1\}$, because $g$ is bijective.  Indeed, if $f_i(gV_{[0,1/d)}) = (n/d)$ for any $1 \leq n < d,$ then this implies $g([0,1/d)) = [0,1)$ or $[0,1].$  In either case, $g([1/d,1]) = 1$ or the empty set, contradicting the bijectivity of $g.$ We can now conclude $f_i$ is well defined.

Next, we want to show each $f_i$ is a bijection.  To see injectivity, we remark that a standard $d$-adic interval is determined by knowing any of its $d^{th}$ points.  To see this, recall how the endpoints of a standard $d$-adic interval, $[a/d^l, (a+1)/d^l]$ are reduced fractions, so if you know $(n/d)(g(0)+g(1/d))$ for some $1 \leq n < d,$ you can recover all of $g([0,1/d)).$  To see that $f_i$ is surjective, note that every $d$-adic rational other than, $0, 1/d, 2/d, \dots, (d-1)/d$ or $1$ is the $d^{th}$ point of some standard $d$-adic interval $I=[a/d^l, (a+1)/d^l).$ Additionally, any standard $d$-adic interval can be viewed as the image  $h([0,1/d)),$ for some $h$ which maps $[0,1/d)$ standard affinely. Moreover, such an $h$ is in $X-X_{\mathcal{P}_i}$ exactly because it contains an element of $\mathcal{P}_i.$

Next note how the image of $f_i ( X-X_{\mathcal{P}_i})$ is a finite set and by the decomposition of the cocycle in equation \ref{co}, $c(v)$ is non-zero only finitely many times.  Thus, $c(v)$ is finitely supported and hence an element of $\ell^2(V_d/V_{[0,1/d)}).$

(ii) We will now prove that $c$ is proper, by showing that there can only be finitely many standard $d$-adic partitions of a given size.  First, $\|c(v) \|_2$ is determined by $\mathcal{P}_1$ and $\mathcal{P}_2$ from it's representative triple.  To see this, take $gV_{[0,1/d)} \in X - X_{\mathcal{P}_1}.$  By definition, $g$ maps $[0,1/d)$ standard affinely, $g([0, 1/d)) \cap \mathcal{P}_1 \neq \emptyset,$ and $g([0, 1/d))$ is a standard $d$-adic interval. Therefore, $v$ cannot map $g([0, 1/d))$ standard affinely by property (*) from proposition \ref{prop}. It follows that $vgV_{[0,1/d)} \notin X$. Therefore, $v\cdot (X-X_{\mathcal{P}_1}) \cap (X-X_{\mathcal{P}_2}) = \emptyset.$ Again using the decomposition of the cocycle in equation \ref{co}, and the bijective nature of $f_i$ we can see that 
\begin{align}
    \|c(v) \|_2 &= \sqrt{|X-X_{\mathcal{P}_1}|+|X-X_{\mathcal{P}_2}|} \\
    &= \sqrt{|\mathcal{P}_1-\{0, 1/d, 2/d, \dots, 1\}| +|\mathcal{P}_2-\{0, 1/d, 2/d, \dots, 1\}|.}
\end{align}

Let $S$ be a standard $d$-adic partition containing $k+2$ elements.  Let $b/d^l \in S$ for some integer $l \leq k$ such that $b \centernot| d.$  Then by Lemma \ref{dpoint} (2), $S \cap (0,1)$ contains $l$ elements. 
Hence, $S$ only contains $d$-adic rational numbers of the form $a/d^l,$ where $l \leq k.$ Consequently, for any positive integer, there can only be finitely many standard $d$-adic partitions of $[0,1]$ of that size.  Therefore, for any $r \in \mathbb{R},$ there are only finitely many representative triplets $(\mathcal{P}_1, \mathcal{P}_2, \phi)$ such that  
\[ \sqrt{|\mathcal{P}_1-\{0, 1/d, 2/d, \dots, 1\}| +|\mathcal{P}_2-\{0, 1/d, 2/d, \dots, 1\}|} < r.\]
In conclusion, $c:V \rightarrow \ell^2(V_d/V_{[0,1/d)})$ is a proper cocycle.
\end{proof}

\end{theorem}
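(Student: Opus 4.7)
The plan is to prove both statements by analyzing $c(v)$ through the canonical representative triplet $(\mathcal{P}_1,\mathcal{P}_2,\phi)$ of $v$ supplied by Proposition~\ref{prop}, and to reduce everything to the arithmetic of standard $d$-adic partitions. Since $c(v)=\chi_{v\cdot X}-\chi_X$, the support of $c(v)$ consists precisely of those cosets $gV_{[0,1/d)}$ lying in exactly one of $X$ and $v\cdot X$, so the key is to understand the symmetric difference.

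\textbf{Step 1 (the auxiliary partitioning).} First I would verify that left multiplication by $v$ induces a bijection $X_{\mathcal{P}_1}\to X_{\mathcal{P}_2}$. If $gV_{[0,1/d)}\in X_{\mathcal{P}_1}$, then the standard $d$-adic interval $g([0,1/d))$ contains no point of $\mathcal{P}_1$ in its interior, so property $(*)$ applied to $v$ shows $v$ is standard affine on $g([0,1/d))$. Applying Proposition~\ref{prop} to $v^{-1}$ (whose canonical representative triplet is $(\mathcal{P}_2,\mathcal{P}_1,\phi^{-1})$) gives the symmetric property and forces $vg([0,1/d))$ to contain no $\mathcal{P}_2$-point in its interior. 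This yields the identity $v\cdot\chi_{X_{\mathcal{P}_1}}=\chi_{X_{\mathcal{P}_2}}$, hence the decomposition
\[
c(v)=\chi_{v\cdot(X-X_{\mathcal{P}_1})}-\chi_{X-X_{\mathcal{P}_2}}.
\]

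\textbf{Step 2 ($\ell^2$-membership via the maps $f_i$).} Next I would define $f_i:X-X_{\mathcal{P}_i}\to\mathcal{P}_i\setminus\{0,1/d,\dots,(d-1)/d,1\}$ by sending $gV_{[0,1/d)}$ to the minimum element of $\{(n/d)(g(0)+g(1/d)):1\le n\le d-1\}\cap\mathcal{P}_i$. By definition of $X-X_{\mathcal{P}_i}$ the interval $g([0,1/d))$ contains a $\mathcal{P}_i$-point in its interior, and Lemma~\ref{dpoint}(3) ensures one of the $d$-th points of $g([0,1/d))$ lies in $\mathcal{P}_i$, so $f_i$ is well-defined. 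Injectivity of $f_i$ uses that any single $d$-th point of a standard $d$-adic interval determines the interval (its endpoints are reduced $d$-adic fractions), and surjectivity uses that every $d$-adic rational outside $\{0,1/d,\dots,1\}$ is a $d$-th point of some standard $d$-adic interval, which is itself $h([0,1/d))$ for some standard affine $h$. Thus $X-X_{\mathcal{P}_i}$ is finite, $c(v)$ is finitely supported, and $c(v)\in\ell^2(V_d/V_{[0,1/d)})$.

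\textbf{Step 3 (properness).} To establish properness I would first argue that $v\cdot(X-X_{\mathcal{P}_1})$ and $X-X_{\mathcal{P}_2}$ are disjoint. If $gV_{[0,1/d)}\in X-X_{\mathcal{P}_1}$, then $g([0,1/d))$ contains a point of $\mathcal{P}_1$ in its interior, so property $(*)$ forces $v$ to fail to be standard affine on $g([0,1/d))$, whence $vg V_{[0,1/d)}\notin X$. Combined with the bijectivity of $f_i$ from Step~2 this gives
\[
\|c(v)\|_2^2=\bigl|\mathcal{P}_1\setminus\{0,1/d,\dots,1\}\bigr|+\bigl|\mathcal{P}_2\setminus\{0,1/d,\dots,1\}\bigr|.
\]
Finally, Lemma~\ref{dpoint}(2) shows that a standard $d$-adic partition of size $k+2$ only uses fractions $b/d^\ell$ with $\ell\le k$; as there are only finitely many such fractions, only finitely many partitions satisfy any fixed size bound. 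Since distinct elements of $V_d$ have distinct canonical triplets by the uniqueness clause of Proposition~\ref{prop}, it follows that $\{v\in V_d:\|c(v)\|_2<r\}$ is finite for every $r>0$, which is properness.

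\textbf{Main obstacle.} The delicate point absent from Farley's $d=2$ argument is the construction of $f_i$. When $d=2$ every standard dyadic interval has a unique midpoint, so the analogous map is automatically well-defined and injective; when $d>2$ there are $d-1$ possible $d$-th points and one must select one canonically to preserve injectivity (here, the smallest member of $\mathcal{P}_i$ among them). Verifying that this canonical choice remains surjective, so that $|X-X_{\mathcal{P}_i}|=|\mathcal{P}_i\setminus\{0,1/d,\dots,1\}|$, is where the bulk of the arithmetic from Lemma~\ref{dpoint} enters and where the corrigendum's insistence on \emph{standard} affine maps (rather than merely affine maps) becomes essential.
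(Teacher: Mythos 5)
Your proposal is correct and follows essentially the same route as the paper's proof: the same decomposition $c(v)=\chi_{v\cdot(X-X_{\mathcal{P}_1})}-\chi_{X-X_{\mathcal{P}_2}}$ via the sets $X_{\mathcal{P}_i}$, the same bijections $f_i$ onto $\mathcal{P}_i\setminus\{0,1/d,\dots,1\}$ defined by selecting the smallest $d$-th point lying in $\mathcal{P}_i$, and the same counting of standard $d$-adic partitions via Lemma \ref{dpoint}(2) for properness. Your closing remark correctly identifies the one genuinely new ingredient beyond Farley's $d=2$ argument, namely the canonical choice needed to keep $f_i$ injective when an interval has $d-1$ candidate $d$-th points.
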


\subsection{Primeness and Proper Proximality}\label{sec:Primeness}

In this section, we collect the proofs of the remaining results for the Higman-Thompson groups $T_d$ and $V_d$.

\begin{lemma}\label{lem:notcoamen}
   The subgroup $V_{[0,1/d)}$ (respectively $T_{[0,1/d)]}$) is not co-amenable inside $V_d$ (respectively $T_d$). In particular, the corresponding quasi-regular representation $\ell^2 (V_d / V_{[0,1/d)})$ (respectively $\ell^2 (T_d / T_{[0,1/d)})$) is non-amenable. 
\end{lemma}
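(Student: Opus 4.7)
My plan is to show directly that there is no $V_d$-invariant finitely additive probability measure on $V_d/V_{[0,1/d)}$; since this is equivalent to the non-co-amenability of $V_{[0,1/d)}$ in $V_d$, and (by the standard Reiter-type equivalence) to non-amenability of the quasi-regular representation $\ell^2(V_d/V_{[0,1/d)})$, this will establish the lemma. The first simplification is to replace the pointwise stabilizer $V_{[0,1/d)}$ by the larger point-stabilizer $K := \mathrm{Stab}_{V_d}(1/d) = \{g \in V_d : g(1/d)=1/d\}$. Since $V_{[0,1/d)}\leq K$, and co-amenability passes upward to larger subgroups via push-forward along the $V_d$-equivariant surjection $V_d/V_{[0,1/d)} \twoheadrightarrow V_d/K$, it suffices to show $K$ is not co-amenable in $V_d$. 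The coset space $V_d/K$ is identified with the $V_d$-orbit of $1/d$, which by the standard transitivity of $F_d$ on the internal $d$-adic rationals equals $D := \mathbb{Z}[1/d]\cap(0,1)$.

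I will assume for contradiction that a $V_d$-invariant finitely additive probability measure $m$ on $D$ exists, and derive a contradiction by computing $m((0,1/d^2)\cap D)$ in two different ways. Consider the finite partition $D = \bigsqcup_{k=0}^{d-1} I_k \sqcup \{1/d, 2/d, \ldots, (d-1)/d\}$ where $I_k = (k/d,(k+1)/d)\cap D$. Elements of $V_d$ (indeed of $F_d$) that permute the $I_k$'s among themselves show that all $m(I_k)$ are equal, and transitivity of $V_d$ on singletons combined with the existence of arbitrarily many disjoint singletons of common measure $s$ (whence $Ns \leq 1$ for every $N$) forces $s=0$. Consequently $m(I_0) = 1/d$, and iterating the same partition argument inside $I_0$ yields $m((0,1/d^2)\cap D) = 1/d^2$.

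For the second computation, the element of $F_d \leq V_d$ whose restriction to $[0,1/d)$ is the standard affine map $x \mapsto x/d$ sends $(0,1/d)\cap D$ bijectively onto $(0,1/d^2)\cap D$, so $V_d$-invariance of $m$ gives $m((0,1/d^2)\cap D) = m((0,1/d)\cap D) = 1/d$. Since $d \geq 2$, the equality $1/d = 1/d^2$ is impossible, yielding the contradiction. The case of $T_d$ is identical because every group element invoked above lies already in $F_d \leq T_d$. The technical points I expect to require care are verifying the transitivity of $V_d$ on $D$ (so that $V_d/K \cong D$ as $V_d$-sets) and exhibiting the specific elements carrying out the interval permutations and the scaling $x \mapsto x/d$ inside $F_d$; both are standard facts about Higman-Thompson groups and should present no real obstacle.
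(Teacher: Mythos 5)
Your overall strategy --- collapsing a putative invariant mean by playing the self-similarity element $x\mapsto x/d$ against an equidistribution argument over the level-$n$ intervals --- is sound and genuinely different from the paper's proof, which instead constructs an explicit ping-pong pair $h_1,h_2$ generating a free subgroup and exhibits a paradoxical decomposition of the $\langle h_1,h_2\rangle$-orbit of the basepoint coset. However, your very first reduction fails: it is \emph{not} true that $V_{[0,1/d)}\leq K:=\operatorname{Stab}_{V_d}(1/d)$. An element of $V_{[0,1/d)}$ is required to fix the half-open interval $[0,1/d)$ pointwise, but the point $1/d$ itself lies outside that interval and need not be fixed: for instance (for $d=2$) the element that is the identity on $[0,1/2)$ and interchanges $[1/2,3/4)$ and $[3/4,1)$ by translations lies in $V_{[0,1/2)}$ yet sends $1/2$ to $3/4$. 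Without the containment $V_{[0,1/d)}\leq K$ there is no $V_d$-equivariant surjection $V_d/V_{[0,1/d)}\twoheadrightarrow V_d/K$, so the push-forward of the invariant mean is unavailable and the whole argument has nothing to act on. The repair is easy: replace $1/d$ by $0$. The map $gV_{[0,1/d)}\mapsto g(0)$ \emph{is} well defined (every element of $V_{[0,1/d)}$ fixes $0$), is $V_d$-equivariant, and surjects onto the orbit of $0$, which is $\mathbb{Z}[1/d]\cap[0,1)$; your two computations of the mass of $(0,1/d^2)$ then go through verbatim on that orbit, and likewise for $T_d$ acting on the circle.

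Two secondary inaccuracies should also be fixed, though neither is fatal. First, $F_d$ is \emph{not} transitive on the interior $d$-adic rationals when $d>2$ (the position of a breakpoint in a standard $d$-adic partition is an invariant mod $d-1$, so e.g.\ no element of $F_3$ carries $1/3$ to $2/3$); transitivity holds for $V_d$ on $\mathbb{Z}[1/d]\cap[0,1)$, which is all you need, and in any case your singleton argument only requires that the orbit be infinite and a single orbit. Second, order-preserving elements of $F_d$ cannot permute the intervals $I_k$ nontrivially; the permuting elements you need are the rotations $x\mapsto x+1/d^n \bmod 1$, which lie in $T_d$ (hence in $V_d$) but not in $F_d$, so your closing sentence that every element invoked lies in $F_d$ is incorrect --- the correct statement is that every element invoked lies in $T_d$, which still covers both cases of the lemma. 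With these corrections your argument is a valid, and arguably more elementary, alternative to the paper's free-subgroup/paradoxical-decomposition proof.
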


    \begin{proof}

We will show the action of $V_d$ on $V_d/V_{[0,1/d)}$ contains a paradoxical decomposition. 

Let $X_1 = [0, \frac{1}{d^2})$ and $X_2 = [\frac{1}{d^2}, \frac{2}{d^2})$. Consider the following two elements $h_1, h_2 \in V_d$, which should be understood as carrying out a ``nesting'' of $X_1$ and $X_2$. 
See Figure \ref{fig:noncoamean} for a depiction of $h_1$ and $h_2$ as maps between $d$-ary trees for $d=3$.

\begin{equation}\label{eq:noncoamean}
h_1 = \begin{cases}
    [0, \frac{1}{d^2}) \mapsto [\frac{d}{d^3}, \frac{d+1}{d^3} )\\
    [\frac{1}{d^2},\frac{2}{d^2}) \mapsto [\frac{d+1}{d^3}, \frac{d+2}{d^3} ) \\ 
    \vdots \\
    [\frac{d-1}{d^2}, \frac{d}{d^2} ) \mapsto [\frac{2d-1}{d^3}, \frac{2d}{d^3} ) \\
     [\frac{d}{d^2}, \frac{d+1}{d^2} ) \mapsto  [\frac{d-1}{d^2}, \frac{d}{d^2}) \\
     [\frac{d+1}{d^2}, \frac{d+2}{d^2} ) \mapsto  [\frac{1}{d}, \frac{2}{d})\\
     \vdots \\
     [\frac{2d-1}{d^2}, \frac{2d}{d^2} ) \mapsto  [\frac{2}{d}, \frac{3}{d}) \\
     [\frac{2}{d}, \frac{3}{d}) \mapsto [\frac{3}{d}, \frac{4}{d}) \\
     \vdots \\
      [\frac{d-3}{d}, \frac{d-2}{d}) \mapsto [\frac{d-2}{d}, \frac{d-1}{d}) \\
        [\frac{d-2}{d}, \frac{d-1}{d}) \mapsto [\frac{d-1}{d}, 1) \\
      [\frac{d-1}{d}, 1) \mapsto [0, \frac{1}{d^2})  
\end{cases} 
 h_2 = \begin{cases}
    [0, \frac{1}{d^2}) \mapsto [0, \frac{1}{d^3} )\\
    [\frac{1}{d^2},\frac{2}{d^2}) \mapsto [\frac{1}{d^3}, \frac{2}{d^3} ) \\ 
    \vdots \\
    [\frac{d-1}{d^2}, \frac{d}{d^2} ) \mapsto [\frac{d-1}{d^3}, \frac{d}{d^3} )  \\
    [\frac{d}{d^2}, \frac{d+1}{d^2} ) \mapsto [\frac{1}{d^2}, \frac{2}{d^2} ) \\
    \vdots \\
    [\frac{2d-2}{d^2}, \frac{2d-1}{d^2} ) \mapsto [\frac{d-1}{d^2}, \frac{d}{d^2} ) \\
     [\frac{2d-1}{d^2}, \frac{2d}{d^2} ) \mapsto [\frac{1}{d}, \frac{2}{d} ) \\
     [\frac{2}{d}, \frac{3}{d} ) \mapsto [\frac{2}{d}, \frac{3}{d} ) \\
     \vdots \\
     [\frac{d-1}{d}, 1 ] \mapsto [\frac{d-1}{d}, 1 ] 
\end{cases} 
\end{equation}

Note that $X_1 \cap X_2 = \emptyset$, $h_1(X_1) \subset X_2$, and $h_2(X_2) \subset X_1$. Now define the following subsets of $V_d/V_{[0,1/d)}$:
\[ A = \{gV_{[0,1/d]} : g([0, 1/d)) \subset X_1 \}, \quad  B = \{gV_{[0,1/d]} : g([0, 1/d)) \subset X_2 \}.\]
Then for $gV_{[0,1/d]} \in A$, we have that $h_1\circ g([0,1/d]) \subset X_2$. So $h_1(A) \subset B$. Similarly, for $gV_{[0,1/d]} \in B$, we have that $h_1\circ g([0,1/d]) \subset X_1$. So $h_2(B) \subset A$. It can also be seen that $h_1^n(A) \subset B$ and $h_2^n(B) \subset A$ for all $n \in \mathbb{N}$. Therefore by the ping-pong lemma, we have that $\langle h_1, h_2\rangle \cong \mathbb{F}_2$ as a subgroup of $V_d$. Now define several more subsets of $V_d/V_{[0,1/d)}$: 
\begin{align*}
    Y &= \{ wV_{[0,1/d]}: w \: \text{is a word in} \: \langle h_1, h_2 \rangle \} \\
    G_1 &= \{ h_1wV_{[0,1/d]}: w \: \text{is a word in} \: \langle h_1, h_2 \rangle \} \\
    G_2 &= \{ h_1^{-1}wV_{[0,1/d]}: w \: \text{is a word in} \: \langle h_1, h_2 \rangle \} \\ 
    G_3 &= \{ h_2wV_{[0,1/d]}: w \: \text{is a word in} \: \langle h_1, h_2 \rangle \} \\
    G_4 &= \{ h_2^{-1}wV_{[0,1/d]}: w \: \text{is a word in} \: \langle h_1, h_2 \rangle \}.  
\end{align*}

Then $Y = G_1 \sqcup G_2 \sqcup G_3 \sqcup G_4$ is a paradoxical decomposition for $Y \subset V_d/V_{[0,1/d)}$. Therefore we concluded that $V_{[0,1/d)}$ is not co-amenable inside $V_d$.

Now note that $h_1$ and $h_2$ are cyclic bijections of $[0,1]$ so they are also elements of $T_d$. By replacing $V_{[0,1/d)}$ with $T_{[0,1/d)}$ in the above argument we quickly see that $T_{[0,1/d)}$ is not co-amenable in $T_d$.
\end{proof}

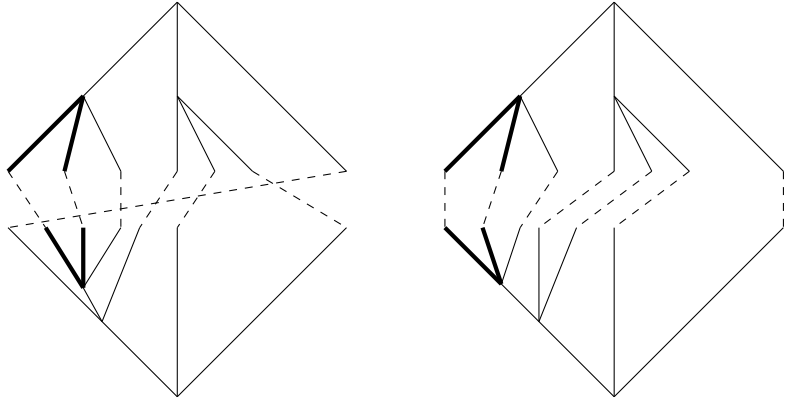
\begin{figure}
    \centering
    \begin{tikzpicture}
    \draw (5.75,10.5) -- (4.5,9.25);
    \draw[ultra thick] (4.5,9.25) -- (3.5,8.25);
    \draw  (5.75,10.5) -- (8,8.25);
    \draw  (5.75,10.5) -- (5.75,8.25);
    \draw  (4.5,9.25) -- (5,8.25);
    \draw[ultra thick]  (4.5,9.25) -- (4.25,8.25);
    \draw  (5.75,9.25) -- (6.75,8.25);
    \draw  (5.75,9.25) -- (6.25,8.25);
    \draw  (3.5,7.5) -- (5.75,5.25);
    \draw  (8,7.5) -- (5.75,5.25);
    \draw  (5.75,7.5) -- (5.75,5.25);
    \draw (4.75,6.25) -- (5.25,7.5);
    \draw  (4.75,6.25) -- (4.5,6.7);
    \draw[ultra thick] (4.5,6.7) -- (4,7.5);
    \draw   (4.5,6.7) -- (5,7.5);
    \draw [ultra thick] (4.5,6.7) -- (4.5,7.5);
    \draw [dashed] (3.5,8.25) -- (4,7.5);
    \draw [dashed] (4.25,8.25) -- (4.5,7.5);
    \draw [dashed] (5,8.25) -- (5,7.5);
    \draw [dashed] (5.75,8.25) -- (5.25,7.5);
    \draw [dashed] (6.25,8.25) -- (5.75,7.5);
    \draw [dashed] (6.75,8.25) -- (8,7.5);
    \draw [dashed] (8,8.25) -- (3.5,7.5);
    \end{tikzpicture}
\hspace{1cm}
\begin{tikzpicture}
    \draw  (5.75,10.5) -- (3.5,8.25);
    \draw[ultra thick] (4.5,9.25) -- (3.5,8.25);
\draw  (5.75,10.5) -- (8,8.25);
\draw  (5.75,10.5) -- (5.75,8.25);
\draw  (4.5,9.25) -- (5,8.25);
 \draw[ultra thick]  (4.5,9.25) -- (4.25,8.25);
\draw  (4.5,9.25) -- (4.25,8.25);
\draw  (5.75,9.25) -- (6.75,8.25);
\draw  (5.75,9.25) -- (6.25,8.25);
\draw  (3.5,7.5) -- (5.75,5.25);
\draw [ultra thick] (3.5,7.5) -- (4.25,6.75);
\draw  (8,7.5) -- (5.75,5.25);
\draw  (5.75,7.5) -- (5.75,5.25);
\draw  (4.75,6.25) -- (5.25,7.5);
\draw  (4.75,6.25) -- (4.75,7.5);
\draw  (4.25,6.75) -- (4.5,7.5);
\draw  [ultra thick] (4,7.5) -- (4.25,6.75);
\draw [dashed] (3.5,8.25) -- (3.5,7.5);
\draw [dashed] (4.25,8.25) -- (4,7.5);
\draw [dashed] (5,8.25) -- (4.5,7.5);
\draw [dashed] (5.75,8.25) -- (4.75,7.5);
\draw [dashed] (6.25,8.25) -- (5.25,7.5);
\draw [dashed] (6.75,8.25) -- (5.75,7.5);
\draw [dashed] (8,8.25) -- (8,7.5);
\end{tikzpicture}
    \caption{Picture above are $h_1$ and $h_2$ from equation \ref{eq:noncoamean} for $d=3$. On the left is $h_1$, mapping both $X_1$ and $X_2$ into $X_2$. On the right is $h_2$, mapping both $X_1$ and $X_2$ into $X_1$.}
    \label{fig:noncoamean}
\end{figure}

Any group that admits a proper cocycle into a non-amenable representation has, by definition, property (HH) of Ozawa and Popa \cite{ozawapopa2010cartanII}. Moreover, by \cite[Proposition 2.1]{ozawapopa2010cartanII}, any group with property (HH) is not inner amenable. So combining this with Lemma \ref{lem:notcoamen} and Theorem \ref{cocycle} gives a proof of the non-inner amenability of $T_d$ and $V_d$ which is different than the proofs given in \cite{HO2016} and \cite{bashwinger2022non}. Boutonnet, Ioana, and Peterson showed in \cite[Proposition 1.6]{boutonnet2021properly} that groups with property (HH) are also properly proximal, so this same discussion gives the following corollary.

\begin{corollary}[Theorem \ref{MainThmPropP}]\label{Cor:PropP}  The Higman-Thompson group $T_d$ and $V_d$ are properly proximal for every integer $d\geq 2$.
\end{corollary}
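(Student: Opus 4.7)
The proof should be essentially a direct assembly of the two main ingredients already established earlier in the paper, so the plan is short and the main work has already been done.

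First I would invoke Theorem \ref{cocycle}, which produces a proper 1-cocycle $c : V_d \to \ell^2(V_d/V_{[0,1/d)})$ for the quasi-regular representation associated to the subgroup $V_{[0,1/d)}$, and similarly (by the same construction restricted to cyclic bijections of $[0,1]$, or by Farley's original argument extended in the same way) a proper 1-cocycle $T_d \to \ell^2(T_d/T_{[0,1/d)})$. Next I would cite Lemma \ref{lem:notcoamen}, which shows that neither $V_{[0,1/d)}$ is co-amenable in $V_d$ nor $T_{[0,1/d)}$ is co-amenable in $T_d$; equivalently, the quasi-regular representations $\ell^2(V_d/V_{[0,1/d)})$ and $\ell^2(T_d/T_{[0,1/d)})$ are non-amenable orthogonal representations.

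Putting these two facts together, $V_d$ and $T_d$ each admit a proper 1-cocycle into a non-amenable orthogonal representation. By definition, this is precisely Ozawa--Popa's property (HH) from \cite{ozawapopa2010cartanII}.

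The final step is to appeal to \cite[Proposition 1.6]{boutonnet2021properly} of Boutonnet, Ioana, and Peterson, which states that every countable discrete group with property (HH) is properly proximal. Applying this to $V_d$ and $T_d$ yields the desired conclusion for every integer $d \geq 2$. The ``hard part'' has already been discharged: the nontrivial content is Theorem \ref{cocycle} (extending Farley's construction to arbitrary $d$, with the standard-affine correction from \cite{farley2021corrigendum}) and Lemma \ref{lem:notcoamen} (the explicit ping-pong pair $h_1, h_2$ in equation \ref{eq:noncoamean} producing a paradoxical decomposition of an orbit in $V_d/V_{[0,1/d)}$). Given both, the corollary is a one-line citation.
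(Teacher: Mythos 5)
Your proposal is correct and follows exactly the paper's own argument: the paper derives Corollary \ref{Cor:PropP} in the paragraph immediately preceding it by combining Theorem \ref{cocycle} (the proper cocycle into the quasi-regular representation) with Lemma \ref{lem:notcoamen} (non-amenability of that representation) to obtain property (HH), and then citing \cite[Proposition 1.6]{boutonnet2021properly}. Nothing is missing.
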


It is interesting to observe the proper proximality of $T_d$ and $V_d$, as neither group is bi-exact or acylindrically hyperbolic (see \cite{dahmani2017hyperbolically}).

If a group $G$ admits an unbounded cocycle into a non-amenable orthogonal representation that is mixing with respect to some family of subgroups $\mathcal{S}$, then $G$ satisfies a special case of the \textbf{NC}$(\mathcal{S})$ condition of \cite{chifan2016inner}. Therefore, combining Theorem \ref{cocycle}, Lemma \ref{lem:notcoamen}, and Lemma \ref{lem:relmix} with \cite[Theorem 4.1, Corollary 4.2]{chifan2016inner} yields the following corollary.

\begin{corollary}\label{cor:VnonGamma}
Let $(N,\tau)$ be any tracial von Neumann algebra, and let $V_d \curvearrowright (N, \tau)$ (respectively,  $T_d \curvearrowright (N, \tau)$)
be any free strongly ergodic p.m.p~action. Then $N \rtimes V_d$ (respectively, $N \rtimes T_d$) does not have property Gamma.

\end{corollary}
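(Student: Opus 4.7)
The plan is to realize this as a direct application of the Chifan--Das--Houdayer non-Gamma criterion from \cite{chifan2016inner} after assembling the three ingredients supplied earlier in the paper. The hypothesis of that result, in the form needed here, is that the acting group $G$ admits an unbounded $1$-cocycle into an orthogonal representation $(\pi,\mathcal{H})$ which is both non-amenable and mixing relative to some family $\mathcal{S}$ of subgroups (this is a special instance of Chifan--Das--Houdayer's \textbf{NC}$(\mathcal{S})$ condition); under such a hypothesis, for any free strongly ergodic p.m.p.\ action $G\curvearrowright (N,\tau)$, the crossed product $N\rtimes G$ is full, i.e.\ does not have property Gamma.

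First I would set $G=V_d$ (the argument for $T_d$ is identical after the obvious replacements) and $H=V_{[0,1/d)}$, and take $\pi:G\to\mathcal{O}(\ell^2(G/H))$ to be the quasi-regular representation. Theorem \ref{cocycle} gives an unbounded (in fact proper) $1$-cocycle $c:V_d\to\ell^2(V_d/V_{[0,1/d)})$ associated with $\pi$. Lemma \ref{lem:notcoamen} provides the non-amenability of $\pi$, via the explicit paradoxical decomposition of $V_d/V_{[0,1/d)}$ constructed there using the ping-pong pair $h_1,h_2$. Finally, because $\pi$ is quasi-regular with respect to $H$, it is mixing relative to $\mathcal{S}=\{H\}$ (this is exactly the input used in the proof of Lemma \ref{lem:relmix}, and is standard from \cite{boutonnet2012solid}).

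With these three ingredients in hand, the conditions of \cite[Theorem 4.1]{chifan2016inner} (and its Corollary 4.2) are met for $V_d$ with the representation $\ell^2(V_d/V_{[0,1/d)})$ relative to $\{V_{[0,1/d)}\}$. Invoking that theorem for any free strongly ergodic p.m.p.\ action $V_d\curvearrowright(N,\tau)$ yields that $N\rtimes V_d$ is full, hence does not have property Gamma. The $T_d$ case is obtained verbatim, since Theorem \ref{cocycle} also constructs a proper cocycle for $T_d$ into $\ell^2(T_d/T_{[0,1/d)})$, Lemma \ref{lem:notcoamen} simultaneously establishes non-amenability of the corresponding quasi-regular representation (the elements $h_1,h_2$ are cyclic, hence lie in $T_d$), and the relative mixing is again built into the quasi-regular representation.

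The only step requiring any care is matching the hypotheses of \cite[Theorem 4.1]{chifan2016inner} precisely: one must check that ``unbounded cocycle into a mixing, non-amenable representation'' is indeed a sufficient verification of the \textbf{NC}$(\mathcal{S})$ condition used there. This is essentially the content of the deformation/rigidity machinery of Peterson--Sinclair together with the weak compactness obstruction exploited by Chifan--Das--Houdayer, and has been recorded in that form in the literature; no additional computation is needed beyond citing the statement. I do not anticipate any genuine obstacle, since every ingredient was already proved in the preceding sections specifically for this application.
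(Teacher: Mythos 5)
Your proposal is correct and follows essentially the same route as the paper: the paper likewise combines the unbounded cocycle from Theorem \ref{cocycle}, the non-amenability of the quasi-regular representation from Lemma \ref{lem:notcoamen}, and the relative mixing from Lemma \ref{lem:relmix} to verify the \textbf{NC}$(\mathcal{S})$ condition and then invokes \cite[Theorem 4.1, Corollary 4.2]{chifan2016inner}. The only (inessential) discrepancy is your attribution of the cited result's authorship, which does not affect the argument.
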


Recall that $N \subset M$ is {weakly bicentralized} in $M$ if $_ML^2\langle M, e_N \rangle_M \prec {_ML^2(M)_M}$.
To prove $L(V_d)$ is prime by applying Theorem \ref{thm:prime}, will show that $L(V_{[0, 1/d)})$ is weakly bicentralized in $L(V_d)$, in fact we will show it is bicentralized, which is a prior stronger.   
By \cite[Lemma 3.4]{BannonFull2020}, it suffices to show that  \[L(V_{[0,1/d]})=(L(V_{[0,1/d]})'\cap L(V_{d}))'\cap L(V_{d}).\]

To that end, we will need to have some control over $L(H)' \cap L(V_d)$ for specific subgroups $H\leq V_d$. Letting $G$ be a group and $H$ a subgroup of $G$, recall that that the centralizer of $H$ in $G$ is \[\mathcal{C}_G(H):=\{g\in G: h^{-1}gh=g\, \forall h\in H\}.\]
The virtual centralizer  $H$ in $G$ is defined as
\[\mathcal{VC}_G(H) := \{g\in G : |\{h^{-1}gh: h \in H\}| < \infty \}.\]
It is an exercise to show that
\[L(H)'\cap L(G)= \left\{ \sum_{g\in G }c_g \lambda_g\in L(G) :  g\in \mathcal{VC}_G(H),\,  c_{g}=c_{h^{-1}gh}\,\forall g\in G,h\in H\right\}.\]

Recall the subgroup $V_{[0,1/d)}$ defined in Section \ref{sec:Cocycles} is the subgroup of elements of $V_d$ that act as the identity on the interval $[0,1/d)$. Similarly, an element of the $V_{[1/d, 1]}$ acts as the identity on $[1/d,1].$

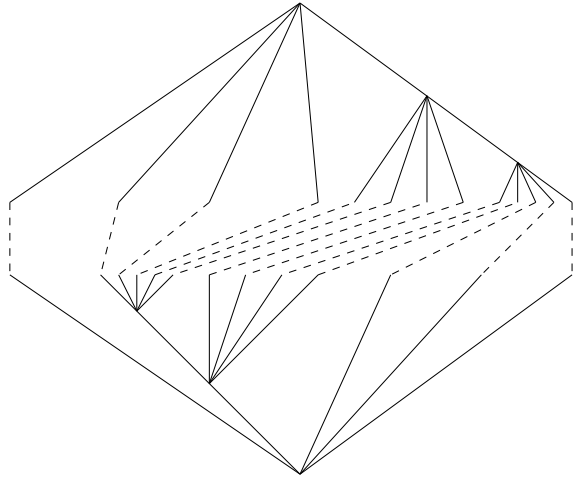
\begin{figure}
\centering
\resizebox{0.5\textwidth}{!}{%
\begin{tikzpicture}
\draw  (2,8.5) -- (6,11.25);
\draw  (6,11.25) -- (9.75,8.5);
\draw  (3.5,8.5) -- (6,11.25);
\draw  (4.75,8.5) -- (6,11.25);
\draw  (6.25,8.5) -- (6,11.25);
\draw  (9,9.05) -- (9.5,8.5);
\draw  (9,9.05) -- (9.25,8.5);
\draw  (9,8.5) -- (9,9.05);
\draw  (8.75,8.5) -- (9,9.05);
\draw  (7.75,9.97) -- (6.75,8.5);
\draw  (7.75,9.97) -- (7.25,8.5);
\draw  (7.75,9.97) -- (7.75,8.5);
\draw  (8.25,8.5) -- (7.75,9.97);
\draw  (2,7.5) -- (6,4.75);
\draw  (9.75,7.5) -- (6,4.75);
\draw  (6,4.75) -- (3.25,7.5);
\draw  (3.75,7) -- (3.5,7.5);
\draw  (3.75,7) -- (3.75,7.5);
\draw  (3.75,7) -- (4,7.5);
\draw  (3.75,7) -- (4.25,7.5);
\draw  (4.75,7.5) -- (4.75,6);
\draw  (4.75,6) -- (5.25,7.5);
\draw  (4.75,6) -- (5.75,7.5);
\draw  (4.75,6) -- (6.25,7.5);
\draw  (6,4.75) -- (7.25,7.5);
\draw  (6,4.75) -- (8.5,7.5);
\draw [dashed] (2,8.5) -- (2,7.5);
\draw [dashed] (3.5,8.5) -- (3.25,7.5);
\draw [dashed] (4.75,8.5) -- (3.5,7.5);
\draw [dashed] (6.25,8.5) -- (3.75,7.5);
\draw [dashed] (4,7.5) -- (6.75,8.5);
\draw [dashed] (7.25,8.5) -- (4.25,7.5);
\draw [dashed] (4.75,7.5) -- (7.75,8.5);
\draw [dashed] (8.25,8.5) -- (5.25,7.5);
\draw [dashed] (8.75,8.5) -- (5.75,7.5);
\draw [dashed] (6.25,7.5) -- (9,8.5);
\draw [dashed] (9.25,8.5) -- (7.25,7.5);
\draw [dashed] (9.5,8.5) -- (8.5,7.5);
\draw [dashed] (9.75,8.5) -- (9.75,7.5);
\end{tikzpicture}
}%
\caption{Pictured above is $v_2$ from the sequence $(v_n) \in V_{[0,1/d)} \cap F_d$ for $d=5$ from Remark \ref{rmk:BicentSeq}.}
\label{fig:bicent}
\end{figure}

\begin{lemma}\label{lemma:VC} For $V_d$, with subgroups $V_{[0, 1/d)}, V_{[1/d, 1]} < V_d$ and $T_d$, with subgroups $T_{[0, 1/d)}, T_{[1/d, 1]} < T_d$  the following hold: 
    \begin{enumerate}
        \item $\mathcal{VC}_{V_d}(V_{[0, 1/d)})=\mathcal{C}_{V_d}(V_{[0, 1/d)}) = V_{[1/d,1]}$, \label{item:VCV_0}
        \item $\mathcal{VC}_{V_d}(V_{[1/d,1]})=\mathcal{C}_{V_d}(V_{[1/d,1]})= V_{[0,1/d)}$, \label{item:VCV_1}
         \item $\mathcal{VC}_{T_d}(T_{[0, 1/d)})=\mathcal{C}_{T_d}(T_{[0, 1/d)}) = T_{[1/d,1]}$, \label{item:VCT_0}
        \item $\mathcal{VC}_{T_d}(T_{[1/d,1]})=\mathcal{C}_{T_d}(T_{[1/d,1]})= T_{[0,1/d)}$.\label{item:VCT_1}
    \end{enumerate}
    In particular, $(L(V_{[0, 1/d)})'\cap L(V_d))'\cap L(V_d)=L(V_{[0, 1/d)})$ and $(L(T_{[0, 1/d)})'\cap L(T_d))'\cap L(T_d)=L(T_{[0, 1/d)})$.
\end{lemma}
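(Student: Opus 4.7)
The plan is to prove each of the identities (1)--(4) via two containments, and then deduce the bicentralizer statement from the explicit description of $L(H)'\cap L(G)$ recalled immediately above the lemma. One containment is routine: a homeomorphism in $V_{[1/d,1]}$ is supported on $[0,1/d)$ while one in $V_{[0,1/d)}$ is supported on $[1/d,1]$, and disjointly supported elements commute, so
\[V_{[1/d,1]}\subseteq \mathcal{C}_{V_d}(V_{[0,1/d)})\subseteq \mathcal{VC}_{V_d}(V_{[0,1/d)}),\]
with an identical check for (2). For (3) and (4) the same computation works once one observes that any $h\in T_{[0,1/d)}$ or $h\in T_{[1/d,1]}$ fixes both endpoints $0$ and $1$ (by continuity combined with the cyclic identification $h(0)=h(1)$), so both subgroups lie in the $F_d$-subgroup of $T_d$ where disjoint supports still yield commutation.

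The non-trivial containment $\mathcal{VC}_{V_d}(V_{[0,1/d)})\subseteq V_{[1/d,1]}$ is the core of the argument. I argue by contrapositive: if $g\in V_d$ does not fix $[1/d,1]$ pointwise, I produce infinitely many distinct $V_{[0,1/d)}$-conjugates of $g$. By Proposition \ref{prop}, $g$ is piecewise standard affine on some standard $d$-adic partition, so some standard $d$-adic interval $J\subseteq [1/d,1]$ admits $g|_J(x)=ax+b$ as a non-identity standard affine map. The crucial step is to shrink $J$ to a standard $d$-adic subinterval $I\subseteq J$ with $g(I)\cap I=\emptyset$: when $a=1$ one takes $I$ of length less than $|b|$, and when $a\ne 1$ the unique fixed point $p=b/(1-a)$ of $g|_J$ is generically non-$d$-adic, so any sufficiently small $d$-adic $I\subseteq J$ situated far enough from $p$ will have $g(I)\cap I=\emptyset$. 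I expect this shrinking to be the main technical step, as the scaling case requires some care when $p$ lies in the interior of $J$. Once $I$ is fixed, let $H_I\leq V_d$ denote the subgroup of elements which are the identity on $[0,1]\setminus I$; since $I$ has $d$-adic endpoints, $H_I$ contains a copy of $F_d$ acting on $I$ and is thus infinite, and since $I\subseteq [1/d,1]$ we have $H_I\subseteq V_{[0,1/d)}$. For $h\in H_I$, the identity $h|_{[0,1]\setminus I}=\operatorname{id}$ together with $g(I)\cap I=\emptyset$ forces $h^{-1}gh(x)=g(h(x))$ for all $x\in I$, so for distinct $h_1,h_2\in H_I$ one has $h_1^{-1}gh_1\ne h_2^{-1}gh_2$ by injectivity of $g$. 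This contradicts the finiteness of the $V_{[0,1/d)}$-conjugacy orbit of $g$, proving (1). The symmetric identity (2) follows from the same argument with the two sides of $1/d$ swapped, and (3), (4) are identical inside $T_d$, with $H_I$ now the local $F_d$-type subgroup on $I\subseteq [1/d,1]$, which is again infinite.

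For the bicentralizer conclusion, I combine (1) and (2) with the formula
\[L(H)'\cap L(G) = \bigl\{\textstyle\sum_{g} c_g\lambda_g \, : \, g\in\mathcal{VC}_G(H),\ c_g=c_{h^{-1}gh}\ \forall h\in H\bigr\}\]
recalled above the lemma. From (1), since every element of $V_{[1/d,1]}$ is pointwise centralized by $V_{[0,1/d)}$, the conjugacy condition is vacuous and $L(V_{[0,1/d)})'\cap L(V_d) = L(V_{[1/d,1]})$; applying the formula once more using (2) yields $L(V_{[1/d,1]})'\cap L(V_d) = L(V_{[0,1/d)})$, which is the claimed bicentralizer identity. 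The $T_d$ analogue follows identically from (3) and (4).
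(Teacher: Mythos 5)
Your proof is correct, and for the only non-trivial step --- the containment $\mathcal{VC}_{V_d}(V_{[0,1/d)})\subseteq V_{[1/d,1]}$ --- you take a genuinely different route from the paper. The paper splits into two cases according to whether $g$ lies in $V_{[0,1/d)}$: in the first case it invokes the ICC property of $V_{[0,1/d)}$, and in the second it conjugates by an explicitly constructed sequence $(v_n)\subseteq V_{[0,1/d)}\cap F_d$ (the nesting sequence of Remark \ref{rmk:BicentSeq}) whose terms fix no $d$-ary interval in $[1/d,1]$, and then asserts, rather tersely, that the resulting conjugates are pairwise distinct. Your argument is uniform and case-free: you localize the non-trivial action of $g$ to a small standard $d$-adic interval $I\subseteq[1/d,1]$ with $g(I)\cap I=\emptyset$, and conjugate by the infinite subgroup $H_I\leq V_{[0,1/d)}$ supported on $I$; the identity $h^{-1}gh|_I=g\circ h|_I$ together with injectivity of $g$ makes $h\mapsto h^{-1}gh$ injective on $H_I$, which simultaneously subsumes the ICC input of the paper's Case 1 and gives a fully justified version of its Case 2. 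The displacement step is sound in all cases (translation, and scaling with the fixed point $p$ inside or outside $J$); the aside that $p$ is ``generically non-$d$-adic'' is irrelevant and can be dropped, since choosing $I$ small and bounded away from $p$ works regardless of the arithmetic nature of $p$. The easy containments via disjoint supports, the observation that $T_{[0,1/d)}$ and $T_{[1/d,1]}$ lie in $F_d$ so the same computation applies inside $T_d$, and the derivation of the bicentralizer identity by applying the displayed formula for $L(H)'\cap L(G)$ twice all match the paper's reasoning.
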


\begin{proof}
For brevity, we denote $g^H=\{h^{-1}gh: h \in H\}$.
        To prove \ref{item:VCV_0}, first note the centralizer of $V_{[0, 1/d)}$ is $V_{[1/d, 1]}$. This is clear given that for $h \in V_d$, $h^{-1}gh = g$ for all $g \in V_{[0, 1/d)}$ if and only if $h$ is the identity on $[1/d,1]$, meaning $h$ must be in $V_{[1/d, 1]}$. Next, we show the virtual centralizer is also equal to $V_{[1/d, 1]}$. To show $\mathcal{C}_{V_d}(V_{[0, 1/d)}) = \mathcal{VC}_{V_d}(V_{[0, 1/d)})$, it is enough to show for every $g \in V_d \backslash \mathcal{C}_{V_d}(V_{[0,1/d]})$, $|g^{V_{[0, 1/d)}}| = \infty$; equivalently, if $g \in V_d \backslash \mathcal{C}_{V_d}(V_{[0, 1/d)}),$ there exists $(v_n)_{n\in \mathbb{N} }\in V_{[0, 1/d)}$ such that $v_n^{-1}gv_n \neq v_m^{-1}gv_m$ for infinitely many $n \neq m.$ \\

    Case 1: If $g \not \in \mathcal{C}_{V_d}(V_{[0, 1/d)})$ but $g \in V_{[0, 1/d)},$ then $|g^{V_{[0, 1/d)}}| = \infty$ since $V_{[0, 1/d)}$ is an ICC subgroup of $V_d$. \\

    Case 2: If $g \not \in \mathcal{C}_{V_d}(V_{[0, 1/d)})$ and $g \not \in V_{[0, 1/d)},$ then we can take the sequence $(v_n)_{n \in \mathbb N} \in V_{[0, 1/d)} \cap F_d$ such that $v_n$ is not fixing any $d$-ary interval inside $[1/d,1]$ and $v_n \neq v_m$ for all $n \neq m$. See Remark \ref{rmk:BicentSeq} below for a explicit construction of such a sequence. Now by assumption, $g$ cannot be the identity on $[0,1/d)$ or $[1/d,1]$ and $v_n^{-1}gv_n = v_m^{-1}gv_m$ is only possible when $g$ is the identity on $[1/d,1]$. 
    Therefore $v_n^{-1}gv_n \neq v_m^{-1}gv_m$ for infinitely many $n \neq m$, as needed.

    This completes the proof of \eqref{item:VCV_0}. The proof of \eqref{item:VCV_1} then follows from a symmetric argument. Note that \eqref{item:VCV_0} implies that $L(V_{[0,1/d)})'\cap L(V_d)= L(V_{[1/d,1]})$, and \eqref{item:VCV_1} implies that $ L(V_{[1/d,1]})'\cap L(V_d)= L(V_{[0,1/d)}) $. Thus $(L(V_{[0, 1/d)})'\cap L(V_d))'\cap L(V_d)=L(V_{[0, 1/d)})$.

    Each element in the sequence $(v_n)_{n \in \mathbb{N}}$ defined above in case 2 is an element of $F_d$, therefore it is also in $T_d$. With that key observation, and replacing $V_{[0, 1/d)}, V_{[1/d, 1]}, V_d$ with $T_{[0,1/d)}, T_{[1/d,1]}, T_d$ where appropriate, \eqref{item:VCT_0} and \eqref{item:VCT_1} also follow. 
\end{proof}

\begin{remark}\label{rmk:BicentSeq}
     We can construct a sequence $(v_n)_{n \in \mathbb N} \in V_{[0, 1/d)} \cap F_d$ (so also in $T_d$) such that  $v_n$ is not fixing any $d$-ary interval inside $[1/d,1]$ and $v_n \neq v_m$ for all $n \neq m$. One such sequence can be described as follows.
     
     Viewing $V_d$ as the map of piecewise bijections from the unit interval to itself, define $v_1$ by dividing $(\frac{d-1}{d}, 1)$ into $d$ equal parts in the domain of $v_1$ and $(\frac{1}{d}, \frac{2}{d})$ into $d$ equal parts in the range. Then $v_1$ will be the element of $F_d$ that is the identity on $[0,1/d)$ and connects the remaining $d$-ary intervals as defined above in a piecewise continuous way. Define $v_2$ recursively from $v_1$ by dividing $(\frac{d^2-1}{d^2}, 1)$ into $d$ equal parts in the domain of $v_2$ and $(\frac{1}{d^2}, \frac{2}{d^2})$ into $d$ equal parts in the range. Following this procedure, $v_n$ is defined by dividing $(\frac{d^n-1}{d^n}, 1)$ into $d$ equal parts in the domain of $v_n$ and $(\frac{1}{d^n}, \frac{2}{d^n})$ into $d$ equal parts in the range and connecting as before. It is straightforward to see that each $v_n$ will not fix any $d$-ary interval inside $[1/d,1]$ and $v_n \neq v_m$ for all $n \neq m$. 

     For clarity, the equation for $v_2$ is described in Equation \eqref{equation:v2swq} 
     where there are six distinct piecewise patterns. In general, $v_n$ contains $n+4$ distinct piecewise patterns. The intuition behind the sequence can be seen in Figure \ref{fig:bicent} where $v_2$ is depicted for $d=5$. 
\begin{figure*}
    \centering
    \begin{equation} \label{equation:v2swq}
    v_2 = \begin{cases}
    [0,1/d) \mapsto [0,1/d) \\
    [\frac{1}{d}, \frac{2}{d}) \mapsto [\frac{d^2}{d^3}, \frac{d^2+1}{d^3}) \\
    \vdots \\
     [\frac{d-2}{d},\frac{d-1}{d}) \mapsto [\frac{d^2+d-3}{d^3},\frac{d^2+d-2}{d^3}) \\
     [\frac{d^2-d}{d^2}, \frac{d^2-d+1}{d^2}) \mapsto [\frac{d^2+d-2}{d^3},\frac{d^2+d-1}{d^3}) \\
    [\frac{d^2-d+1}{d^2}, \frac{d^2-d+2}{d^2}) \mapsto [\frac{d^2+d-1}{d^3},\frac{d^2+d}{d^3}) \\
    [\frac{d^2-d+2}{d^2}, \frac{d^2-d+3}{d^2}) \mapsto [\frac{d+1}{d^2},\frac{d+2}{d^2}) \\
    \vdots \\
     [\frac{d^2-2}{d^2}, \frac{d^2-1}{d^2}) \mapsto [\frac{2d-3}{d^2},\frac{2d-2}{d^2}) \\
    [\frac{d^3-d}{d^3}, \frac{d^3-d+1}{d^3}) \mapsto [\frac{2d-2}{d^2}, \frac{2d-1}{d^2}) \\
     [\frac{d^3-d+1}{d^3}, \frac{d^3-d+2}{d^3}) \mapsto [\frac{2d-1}{d^2}, \frac{2d}{d^2}) \\
    [\frac{d^3-d+2}{d^3}, \frac{d^3-d+3}{d^3}) \mapsto [\frac{2}{d}, \frac{3}{d}) \\
    \vdots \\
    [\frac{d^3-1}{d^3}, 1] \mapsto [\frac{d-1}{d}, 1] \\
\end{cases}
\end{equation}
    \caption{The equation of $v_2$ from the sequence $(v_n) \in V_{[0,1/d)}\cap F_d$ from Remark \ref{rmk:BicentSeq}.}
    \label{fig:v2seq}
\end{figure*}

\end{remark}

We now have all of the necessary tools to prove $L(V_d)$ and $L(T_d)$ are prime. 

\begin{corollary}[Theorem \ref{MainThm:Prime}]\label{cor:Prime}
    The group von Neumann algebra $L(V_d)$ (respectively $L(T_d$)) of the Higman-Thompson group $V_d$ (respectively $T_d$) is prime for every integer $d\geq 2$. 
\end{corollary}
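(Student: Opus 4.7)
The plan is to apply Theorem \ref{thm:prime} to $G=V_d$ with subgroup $H=V_{[0,1/d)}$, and analogously to $G=T_d$ with $H=T_{[0,1/d)}$. Four hypotheses must be verified: (i) $G$ is ICC and $L(G)$ does not have property Gamma; (ii) $H$ is weakly malnormal in $G$; (iii) $L(H)$ is weakly bicentralized in $L(G)$; and (iv) $G$ admits an unbounded $1$-cocycle into the quasi-regular representation $\ell^2(G/H)$.

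Three of these are already in hand. Condition (iv) is exactly Theorem \ref{cocycle}, which produces a proper (hence unbounded) cocycle into $\ell^2(G/H)$. Condition (iii) follows from Lemma \ref{lemma:VC}, which establishes the stronger statement that $L(H)$ is bicentralized in $L(G)$; the remark immediately after the statement of Theorem \ref{thm:prime} observes that bicentralization implies weak bicentralization. For condition (i), both $V_d$ and $T_d$ are well known to be ICC. Combining Theorem \ref{cocycle} and Lemma \ref{lem:notcoamen} gives an unbounded cocycle into a non-amenable representation, so $G$ has property \textrm{(HH)} of Ozawa and Popa and is therefore non-inner amenable, as already observed in the proof of Corollary \ref{Cor:PropP}. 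By a classical result of Effros, an ICC group whose group von Neumann algebra has property Gamma must be inner amenable; hence $L(G)$ does not have property Gamma.

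The only new ingredient required is condition (ii). I would pick $g_1,\dots,g_{d-1}\in V_d$ with $g_i\bigl([0,1/d)\bigr)=[i/d,(i+1)/d)$; each $g_i$ moves $[0,1/d)$ off itself, so $g_i\notin V_{[0,1/d)}$. The conjugate $g_iV_{[0,1/d)}g_i^{-1}$ is precisely the subgroup of elements of $V_d$ fixing $[i/d,(i+1)/d)$ pointwise, and therefore
\[V_{[0,1/d)}\cap\bigcap_{i=1}^{d-1}g_iV_{[0,1/d)}g_i^{-1}\]
consists of those elements of $V_d$ that fix $[0,1)$ pointwise, which is the trivial group. Hence $V_{[0,1/d)}$ is weakly malnormal in $V_d$. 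For $T_d$, one can take $g_i$ to be the cyclic rotation by $i/d$, which lies in $T_d$, and the same intersection computation yields triviality.

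Putting these four verifications together and invoking Theorem \ref{thm:prime} delivers primeness of $L(V_d)$ and $L(T_d)$. I do not expect a serious obstacle: the heavy lifting (the cocycle construction, the centralizer analysis, and the deformation/rigidity argument in Theorem \ref{thm:prime}) has already been carried out, and what remains is the short verification of weak malnormality sketched above.
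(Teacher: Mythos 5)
Your proposal is correct and follows essentially the same route as the paper: both proofs verify the four hypotheses of Theorem \ref{thm:prime} for $H=V_{[0,1/d)}$ (resp.\ $T_{[0,1/d)}$), using Theorem \ref{cocycle} for the unbounded cocycle, Lemma \ref{lemma:VC} together with \cite[Lemma 3.4]{BannonFull2020} for weak bicentralization, and non-inner amenability (the paper cites \cite{bashwinger2022non} directly, while you route through property (HH), an alternative the paper itself endorses earlier in Section \ref{sec:Primeness}). The one point of divergence is weak malnormality, where your intersection of the $d-1$ conjugates $g_iV_{[0,1/d)}g_i^{-1}$ is actually the more robust argument: the paper uses a single rotation $h$ and asserts $V_{[0,1/d)}\cap V_{[0,1/d)}^{h}=\{1\}$, but for $d\geq 3$ an element fixing $[0,1/d)$ and one translated copy of it can still act nontrivially on the remaining $d-2$ subintervals, so your multi-conjugate version is the one that cleanly covers every $d$.
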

\begin{proof}
By \cite[Theorem 3.1]{bashwinger2022non}, $V_d$ (respectively $T_d$) is non-inner amenable and thus $L(V_d)$ (respectively $L(T_d)$)  does not have property Gamma. 

     Then, observe that $ V_{[0,1/d)}$ is weakly malnormal in $V_d$ as $V_{[0,1/d)}\cap V_{[0,1/d)}^{h}=\{1\}$, where $h \in T_d$ is the element 
     \[  h(x) = \begin{cases}  x+ \frac{1}{d}, & 0 \leq x < \frac{d-1}{d} \\
    x-\frac{d-1}{d}, & \frac{d-1}{d} \leq x \leq 1 
    \end{cases}. \]

    Lemma \ref{lemma:VC} implies that
      $L(V_{[0,1/d]})=(L(V_{[0,1/d]})'\cap L(V_{d})^\omega)'\cap L(V_{d})$
     for any ultrafilter $\omega\in \beta \mathbb{N}\setminus \mathbb{N}$, and thus by \cite[Lemma 3.4]{BannonFull2020},  $L(V_{[0, 1/d)})$ is weakly bicentralized in $L(V_d)$.   
     By Proposition \ref{cocycle}, $V_d$ admits an unbounded 1-cocycle into the quasi-regular representation $\ell^2(V_d/ V_{[0,1/d)})$. The assumptions of Theorem \ref{thm:prime} are now satisfied, so $L(V_d)$ is prime.  

      To arrive at the same conclusion for $L(T_d)$, note that $T_{[0,1/d)}$ is weakly malnormal in $T_d$ for the same reason $V_{[0,1/d)}$ is weakly malnormal in $V_d$. Then, by the analogous argument to the previous paragraph, we see that $L(T_d)$ is prime. 
\end{proof}

We end this paper with the prime II$_1$ factors coming from actions of $T_d$ and $V_d.$ The approach is to confirm that both groups are concrete example satisfying \cite[Corollary 1.2]{patchell2023primeness}. We briefly recall how to construct a von Neumann algebra from a generalized Bernoulli action. Let $I$ be a countable set, then given an action $G \curvearrowright I,$ and a tracial von Neumann algebra $B,$ we define $B^I = \otimes_{i \in I}B.$ An element $x \in B^I$ is written $\otimes_{i \in I}x_i,$ where $x_i \in B^i.$ Then $G$ acts on $B^I$ by $g\cdot (\otimes_{i \in I}x_i) = \otimes_{i\in I}x_{g^{-1}i}.$ The action is then extended to all of $B^I$ so that the crossed product von Neumann algebra $M = B^I \rtimes G$ can be formed. 

\begin{corollary}[Theorem \ref{MainThm:Prime2}]\label{cor:Prime2}
    Let $(B, \tau)$ be a II$_1$ factor. The natural action of $V_d \curvearrowright V_d/V_{[0,1/d)}$ gives rise to a prime II$_1$ factor, $M = B^{V_d/V_{[0,1/d)}} \rtimes V_d$ for every integer $d\geq 2$.  Similarly, $N = B^{T_d/T_{[0,1/d)}} \rtimes T_d$ is a prime II$_1$ factor  for every integer $d\geq 2$. 
\end{corollary}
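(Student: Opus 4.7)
The plan is to apply Patchell's primeness criterion for crossed products from generalized Bernoulli actions \cite[Corollary 1.2]{patchell2023primeness} to the action $V_d \curvearrowright V_d/V_{[0,1/d)}$ (and analogously for $T_d$). The entire proof amounts to verifying that the group-theoretic hypotheses of Patchell's theorem are satisfied, and every one of these has already been assembled in the previous section.

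First, I would note that the point stabilizer of the $V_d$-action on $V_d/V_{[0,1/d)}$ is precisely $V_{[0,1/d)}$, so $M=B^{V_d/V_{[0,1/d)}}\rtimes V_d$ fits the generalized Bernoulli framework of \cite{patchell2023primeness}. Next I would check Patchell's hypotheses one by one. The group $V_d$ is ICC. The stabilizer $V_{[0,1/d)}$ is weakly malnormal in $V_d$, as witnessed by the cyclic shift element $h \in T_d \leq V_d$ used in the proof of Corollary \ref{cor:Prime}, which gives $V_{[0,1/d)}\cap hV_{[0,1/d)}h^{-1}=\{1\}$. By Theorem \ref{cocycle}, $V_d$ admits an unbounded 1-cocycle into the quasi-regular representation $\ell^2(V_d/V_{[0,1/d)})$, and by Lemma \ref{lem:notcoamen} this representation is non-amenable; in particular $V_d$ has property (HH) and is properly proximal (Corollary \ref{Cor:PropP}). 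Finally, should Patchell's statement additionally require weak bicentralization of $L(V_{[0,1/d)})$ in $L(V_d)$ (as does the analogous Theorem \ref{thm:prime}), this was established in Lemma \ref{lemma:VC}. With these inputs, \cite[Corollary 1.2]{patchell2023primeness} immediately yields that $M$ is a prime II$_1$ factor for every II$_1$ factor $B$ and every integer $d\geq 2$.

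For the $T_d$ statement, every input above was established in parallel for $T_d$ with stabilizer $T_{[0,1/d)}$ in the same section, namely ICC-ness, weak malnormality via the very same cyclic shift $h\in T_d$, non-amenability of $\ell^2(T_d/T_{[0,1/d)})$ (Lemma \ref{lem:notcoamen}), existence of a proper cocycle (Theorem \ref{cocycle}), proper proximality (Corollary \ref{Cor:PropP}), and the centralizer computation of Lemma \ref{lemma:VC}. The argument therefore transfers verbatim to give primeness of $N = B^{T_d/T_{[0,1/d)}}\rtimes T_d$. The only potential obstacle is purely bookkeeping: matching the exact formulation of the hypotheses in \cite[Corollary 1.2]{patchell2023primeness} against the various lemmas in Section \ref{sec:PropsofV}. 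There is no new analytic content, which is why the result is stated as a corollary of Patchell's theorem rather than proved from scratch.
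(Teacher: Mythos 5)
Your overall strategy—reduce everything to checking the hypotheses of \cite[Corollary 1.2]{patchell2023primeness}—is exactly what the paper does, and most of your checklist (weak malnormality via the cyclic shift $h$, non-co-amenability of the stabilizer from Lemma \ref{lem:notcoamen}, non-inner amenability via property (HH)) matches the paper's verification. However, there is one hypothesis of Patchell's criterion that you never address and that the paper spends the bulk of its proof on: the stabilizer $V_{[0,1/d)}$ must have \emph{trivial normal core} in $V_d$, equivalently the action $V_d \curvearrowright V_d/V_{[0,1/d)}$ must be faithful. This is not bookkeeping. If the action had a nontrivial kernel $K \trianglelefteq V_d$, the generalized Bernoulli crossed product would see $K$ acting trivially on $B^{V_d/V_{[0,1/d)}}$ and primeness could fail outright (in the extreme case of a direct factor acting trivially, the crossed product literally splits as a tensor product), which is precisely why Patchell's statement requires it. The paper verifies faithfulness by reusing the machinery of Theorem \ref{cocycle}: for $v \neq 1$ with representative triple $(\mathcal{P}_1,\mathcal{P}_2,\phi)$, any coset $gV_{[0,1/d)} \in X_{\mathcal{P}_1}$ is moved by $v$ into $X_{\mathcal{P}_2}$, so $v$ acts nontrivially on $V_d/V_{[0,1/d)}$.

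A secondary issue is that you hedge on what Patchell's hypotheses actually are and import conditions from Theorem \ref{thm:prime} (the unbounded cocycle, weak bicentralization, proper proximality) that play no role in \cite[Corollary 1.2]{patchell2023primeness}. Those extra inputs are harmless but signal that the reduction was not actually pinned down; the correct list is non-inner amenability of the acting group, weak malnormality and non-co-amenability of the stabilizer, and trivial normal core. Supplying the faithfulness argument closes the gap, and the $T_d$ case then transfers as you say.
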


    \begin{proof}
          Note that $T_d$ and $V_d$ are non-inner amenable by \cite{bashwinger2022non} and $V_{[0,1/d)} < V_d$ (respectively $T_{[0,1/d)} < T_d$ ) is weakly malnormal by our discussion above.  In Lemma \ref{lem:notcoamen}, we show that $V_{[0,1/d)}$  (respectively $T_{[0,1/d)}$) is not co-amenable inside $V_d$ (respectively $T_d$).  To see $V_{[0,1/d)}$ has trivial normal core, it is enough to show the action of $V_d$ on $V_d/V_{[0,1/d)}$ is faithful.  To see this, we can use our work in constructing the cocycle in Section \ref{sec:Cocycles} to confirm the action is faithful.  In particular, for each $v \in V_d,$ take its representative triple $T = (\mathcal{P}_1, \mathcal{P}_2, \phi),$ then by choosing an element $gV_{[0,1/d)} \in X_{\mathcal{P}_1},$ the proof of Theorem \ref{cocycle}, discusses how $vgV_{[0,1/d)}$ is in $X_{\mathcal{P}_2}.$  Therefore, the action is faithful and all of the conditions of \cite[Corollary 1.2]{patchell2023primeness} are satisfied. 
        Note that the analogous argument shows $T_{[0,1/d)}$ has trivial normal core, and thus gives the conclusion of the corollary for $T_d$ as well.
    \end{proof}

\bibliographystyle{amsalpha}
\bibliography{stable}

\providecommand{\bysame}{\leavevmode\hbox to3em{\hrulefill}\thinspace}
\providecommand{\MR}{\relax\ifhmode\unskip\space\fi MR }
\providecommand{\MRhref}[2]{%
  \href{http://www.ams.org/mathscinet-getitem?mr=#1}{#2}
}
\providecommand{\href}[2]{#2}
\begin{thebibliography}{dSHHS21}

\bibitem[AB87]{abels1987finiteness}
Herbert Abels and Kenneth~S Brown, \emph{Finiteness properties of solvable
  {S}-arithmetic groups: an example}, Journal of Pure and Applied Algebra
  \textbf{44} (1987), no.~1-3, 77--83.

\bibitem[AC22a]{aroca2022new}
Julio Aroca and Mar{\'\i}a Cumplido, \emph{A new family of infinitely braided
  {T}hompson's groups}, Journal of Algebra \textbf{607} (2022), 5--34.

\bibitem[AC22b]{BraidedVdAA}
Julio Aroca and Mar\'{\i}a Cumplido, \emph{A new family of infinitely braided
  {T}hompson's groups}, J. Algebra \textbf{607} (2022), 5--34. \MR{4441306}

\bibitem[Bas23]{bashwinger2023neumann}
Eli Bashwinger, \emph{von {N}eumann algebras of {T}hompson-like groups from
  cloning systems {II}}, arXiv preprint arXiv:2303.02533 (2023), 1--53.

\bibitem[BBCS08]{brady2008pure}
Tom Brady, Jos{\'e} Burillo, Sean Cleary, and Melanie Stein, \emph{Pure braid
  subgroups of braided {T}hompson's groups}, Publicacions matematiques (2008),
  57--89.

\bibitem[BIP21]{boutonnet2021properly}
R{\'e}mi Boutonnet, Adrian Ioana, and Jesse Peterson, \emph{Properly proximal
  groups and their von {N}eumann algebras}, Annales scientifiques de l{\'E}cole
  normale sup{\'e}rieure \textbf{54} (2021), no.~2, 445--482.

\bibitem[BMO20]{BannonFull2020}
Jon Bannon, Amine Marrakchi, and Narutaka Ozawa, \emph{Full factors and
  co-amenable inclusions}, Comm. Math. Phys. \textbf{378} (2020), no.~2,
  1107--1121. \MR{4134943}

\bibitem[Bou12]{boutonnet2012solid}
R{\'e}mi Boutonnet, \emph{On solid ergodicity for {G}aussian actions}, Journal
  of Functional Analysis \textbf{263} (2012), no.~4, 1040--1063.

\bibitem[Bri07]{brin2007algebra}
Matthew~G Brin, \emph{The algebra of strand splitting. {I}. a braided version
  of {T}hompson's group {$V$}}, Journal of Group Theory \textbf{10} (2007),
  no.~6, 757--788.

\bibitem[Bro87]{brown1987finiteness}
Kenneth~S Brown, \emph{Finiteness properties of groups}, Journal of Pure and
  Applied Algebra \textbf{44} (1987), no.~1-3, 45--75.

\bibitem[BZ22]{bashwinger2022non}
Eli Bashwinger and Matthew~CB Zaremsky, \emph{Non-inner amenability of the
  {H}igman-{T}hompson groups}, arXiv preprint arXiv:2203.13798 (2022), 1--10.

\bibitem[BZ23]{bashwinger2021neumann}
Eli Bashwinger and Matthew C.~B. Zaremsky, \emph{Von {N}eumann algebras of
  {T}hompson-like groups from cloning systems}, J. Operator Theory \textbf{89}
  (2023), no.~1, 23--48. \MR{4567335}

\bibitem[CDD23]{chifan2023rigidity}
Ionu\c{t} Chifan, Michael Davis, and Daniel Drimbe, \emph{Rigidity for von
  {N}eumann algebras of graph product groups {$\rm II$}. superrigidity
  results}, arXiv preprint arXiv:2304.05500 (2023), 1--41.

\bibitem[CDI23]{chifan2023tensor}
Ionu\c{t} Chifan, Daniel Drimbe, and Adrian Ioana, \emph{Tensor product
  indecomposability results for existentially closed factors}, Model theory of
  operator algebras, De Gruyter Ser. Log. Appl., vol.~11, De Gruyter, Berlin,
  [2023] \copyright 2023, pp.~269--302. \MR{4654494}

\bibitem[CdSS18]{chifan2018tensor}
Ionu\c{t} Chifan, Rolando de~Santiago, and Wanchalerm Sucpikarnon, \emph{Tensor
  product decompositions of {$\rm II_1$} factors arising from extensions of
  amalgamated free product groups}, Communications in Mathematical Physics
  \textbf{364} (2018), no.~3, 1163--1194.

\bibitem[CFP96]{cannon1996introductory}
James~W Cannon, William~J Floyd, and Walter~R Parry, \emph{Introductory notes
  on {R}ichard {T}hompson's groups}, Enseignement Math{\'e}matique \textbf{42}
  (1996), 215--256.

\bibitem[CH10]{chifan2010bass}
I~Chifan and C~Houdayer, \emph{Bass-{S}erre rigidity results in von {N}eumann
  algebras}, Duke Mathematical Journal \textbf{153} (2010), no.~1, 23--54.

\bibitem[CKP16]{chifan2016primeness}
Ionu\c{t} Chifan, Yoshikata Kida, and Sujan Pant, \emph{Primeness results for
  von {N}eumann algebras associated with surface braid groups}, International
  mathematics research notices \textbf{2016} (2016), no.~16, 4807--4848.

\bibitem[Con74]{connes1974almost}
Alain Connes, \emph{Almost periodic states and factors of type {${\rm
  III}\sb{1}$}}, J. Functional Analysis \textbf{16} (1974), 415--445.
  \MR{358374}

\bibitem[Con76]{connes1976classification}
\bysame, \emph{Classification of injective factors cases {II$_1$, II$_\infty$,
  III$_\lambda$, $\lambda \ne 1$}}, Annals of Mathematics (1976), 73--115.

\bibitem[CS13]{chifanSinclair2013}
Ionu\c{t} Chifan and Thomas Sinclair, \emph{On the structural theory of {${\rm
  II}_1$} factors of negatively curved groups}, Ann. Sci. \'Ec. Norm. Sup\'er.
  (4) \textbf{46} (2013), no.~1, 1--33. \MR{3087388}

\bibitem[CSU13]{chifan2013structural}
Ionu\c{t} Chifan, Thomas Sinclair, and Bogdan Udrea, \emph{On the structural
  theory of {$\rm II_1$} factors of negatively curved groups, {$\rm II$}:
  {A}ctions by product groups}, Advances in Mathematics \textbf{245} (2013),
  208--236.

\bibitem[CSU16]{chifan2016inner}
\bysame, \emph{Inner amenability for groups and central sequences in factors},
  Ergodic Theory and Dynamical Systems \textbf{36} (2016), no.~4, 1106--1129.

\bibitem[Deh06]{dehornoy2006group}
Patrick Dehornoy, \emph{The group of parenthesized braids}, Advances in
  Mathematics \textbf{205} (2006), no.~2, 354--409.

\bibitem[DGO17]{dahmani2017hyperbolically}
Fran{\c{c}}ois Dahmani, Vincent Guirardel, and Denis Osin, \emph{Hyperbolically
  embedded subgroups and rotating families in groups acting on hyperbolic
  spaces}, Mem. Amer. Math. Soc. \textbf{245} (2017), no.~1156, v+152.
  \MR{3589159}

\bibitem[DI16]{dabrowski2016unbounded}
Yoann Dabrowski and Adrian Ioana, \emph{Unbounded derivations, free dilations,
  and indecomposability results for {$\rm II_1$} factors}, Transactions of the
  American Mathematical Society \textbf{368} (2016), no.~7, 4525--4560.

\bibitem[Din24]{ding2022first}
Changying Ding, \emph{First {$\ell^2$}-{B}etti numbers and proper proximality},
  Adv. Math. \textbf{438} (2024), Paper No. 109467, 21. \MR{4686743}

\bibitem[DKE24a]{ding2021ProperProximality}
Changying Ding and Srivatsav Kunnawalkam~Elayavalli, \emph{Proper proximality
  among various families of groups}, Groups Geom. Dyn. \textbf{18} (2024),
  no.~3, 921--938. \MR{4760266}

\bibitem[DKE24b]{ding2022upgrading}
\bysame, \emph{Structure of relatively biexact group von {N}eumann algebras},
  Comm. Math. Phys. \textbf{405} (2024), no.~4, Paper No. 104, 19. \MR{4733336}

\bibitem[DKEP23]{ding2022properly}
Changying Ding, Srivatsav Kunnawalkam~Elayavalli, and Jesse Peterson,
  \emph{Properly proximal von {N}eumann algebras}, Duke Math. J. \textbf{172}
  (2023), no.~15, 2821--2894. \MR{4675043}

\bibitem[Dri20]{drimbe2020prime}
Daniel Drimbe, \emph{Prime {$\rm II_1$} factors arising from actions of product
  groups}, Journal of Functional Analysis \textbf{278} (2020), no.~5, 108366.

\bibitem[dSHHS21]{dSRHHS21}
Rolando de~Santiago, Ben Hayes, Daniel~J. Hoff, and Thomas Sinclair,
  \emph{Maximal rigid subalgebras of deformations and {$L^{2}$}-cohomology},
  Anal. PDE \textbf{14} (2021), no.~7, 2269--2306. \MR{4353571}

\bibitem[DV18]{deprez2018inner}
Tobe Deprez and Stefaan Vaes, \emph{Inner amenability, property {G}amma,
  {M}c{D}uff factors and stable equivalence relations}, Ergodic Theory and
  Dynamical Systems \textbf{38} (2018), no.~7, 2618--2624.

\bibitem[Far03]{farley2003proper}
Daniel~S Farley, \emph{Proper isometric actions of {T}hompson's groups on
  {H}ilbert space}, International Mathematics Research Notices \textbf{2003}
  (2003), no.~45, 2409--2414.

\bibitem[Far21]{farley2021corrigendum}
\bysame, \emph{Corrigendum to proper isometric actions of {T}hompson’s groups
  on {H}ilbert space}, International Mathematics Research Notices (2021),
  2698--2700.

\bibitem[Fim11]{fima2010}
Pierre Fima, \emph{A note on the von {N}eumann algebra of a
  {B}aumslag-{S}olitar group}, C. R. Math. Acad. Sci. Paris \textbf{349}
  (2011), no.~1-2, 25--27. \MR{2755690}

\bibitem[FM77]{feldman1977ergodic}
Jacob Feldman and Calvin~C Moore, \emph{Ergodic equivalence relations,
  cohomology, and von {N}eumann algebras. {I}}, Transactions of the American
  mathematical society \textbf{234} (1977), no.~2, 289--324.

\bibitem[Fur99]{furman1999orbit}
Alex Furman, \emph{Orbit equivalence rigidity}, Annals of Mathematics (1999),
  1083--1108.

\bibitem[HHL23]{horbez2023proper}
Camille Horbez, Jingyin Huang, and Jean L{\'e}cureux, \emph{Proper proximality
  in non-positive curvature}, American Journal of Mathematics \textbf{145}
  (2023), no.~5, 1327--1364.

\bibitem[Hig74]{higman1974finitely}
Graham Higman, \emph{Finitely presented infinite simple groups}, vol.~8,
  Department of Pure Mathematics, Department of Mathematics, IAS,
  Australian~…, 1974.

\bibitem[Hof16]{hoff2016neumann}
Daniel~J Hoff, \emph{Von neumann algebras of equivalence relations with
  nontrivial one-cohomology}, Journal of Functional Analysis \textbf{270}
  (2016), no.~4, 1501--1536.

\bibitem[Hug09]{hughes2009}
Bruce Hughes, \emph{Local similarities and the {H}aagerup property}, Groups
  Geom. Dyn. \textbf{3} (2009), no.~2, 299--315, With an appendix by Daniel S.
  Farley. \MR{2486801}

\bibitem[IM22]{isono2021tensor}
Yusuke Isono and Amine Marrakchi, \emph{Tensor product decompositions and
  rigidity of full factors}, Ann. Sci. \'{E}c. Norm. Sup\'{e}r. (4) \textbf{55}
  (2022), no.~1, 109--139. \MR{4411859}

\bibitem[Ioa15]{ionanaCartan2015}
Adrian Ioana, \emph{Cartan subalgebras of amalgamated free product {${\rm
  II}_1$} factors}, Ann. Sci. \'Ec. Norm. Sup\'er. (4) \textbf{48} (2015),
  no.~1, 71--130, With an appendix by Ioana and Stefaan Vaes. \MR{3335839}

\bibitem[Jol97]{jolissaint1997inner}
Paul Jolissaint, \emph{Inner amenability of {T}hompson's group {$F$}}, Comptes
  Rendus de l'Academie des Sciences Series I Mathematics \textbf{1} (1997),
  no.~325, 61--64.

\bibitem[Jol98]{jolissaint1998central}
\bysame, \emph{Central sequences in the factor associated with thompson’s
  group {$ F$}}, Annales de l'institut Fourier \textbf{48} (1998), no.~4,
  1093--1106.

\bibitem[JS87]{jones1987asymptotically}
Vaughan~FR Jones and Klaus Schmidt, \emph{Asymptotically invariant sequences
  and approximate finiteness}, American journal of mathematics (1987), 91--114.

\bibitem[Kid15a]{kida2015stability}
Yoshikata Kida, \emph{Stability in orbit equivalence for {B}aumslag--{S}olitar
  groups and vaes groups}, Groups, Geometry, and Dynamics \textbf{9} (2015),
  no.~1, 203--235.

\bibitem[Kid15b]{kidastable2015}
\bysame, \emph{Stable actions of central extensions and relative property
  {$(T)$}}, Israel Journal of Mathematics \textbf{207} (2015), no.~2, 925--959.

\bibitem[McD70]{mcduff1970central}
Dusa McDuff, \emph{Central sequences and the hyperfinite factor}, Proceedings
  of the London Mathematical Society \textbf{3} (1970), no.~3, 443--461.

\bibitem[MVN37]{murray1937rings}
Francis~J Murray and John Von~Neumann, \emph{On rings of operators. {$\rm
  II$}}, Transactions of the American Mathematical Society \textbf{41} (1937),
  no.~2, 208--248.

\bibitem[MvN43]{murray1943rings}
Francis~J Murray and John von Neumann, \emph{On rings of operators. {$\rm
  IV$}}, Annals of Mathematics (1943), 716--808.

\bibitem[Nek04]{nekrashevych2004cuntz}
Volodymyr~V Nekrashevych, \emph{Cuntz-pimsner algebras of group actions},
  Journal of Operator Theory (2004), 223--249.

\bibitem[OP10a]{ozawapopa2010cartanI}
Narutaka Ozawa and Sorin Popa, \emph{On a class of {${\rm II}_1$} factors with
  at most one {C}artan subalgebra}, Ann. of Math. (2) \textbf{172} (2010),
  no.~1, 713--749. \MR{2680430}

\bibitem[OP10b]{ozawapopa2010cartanII}
\bysame, \emph{On a class of {${\rm II}_1$} factors with at most one {C}artan
  subalgebra, {II}}, American Journal of Mathematics \textbf{132} (2010),
  no.~3, 841--866.

\bibitem[Pat23]{patchell2023primeness}
Gregory Patchell, \emph{Primeness of generalized wreath product {$\rm II_1$}
  factors}, arXiv preprint arXiv:2305.07841 (2023), 1--23.

\bibitem[Pet09]{peterson2009}
Jesse Peterson, \emph{{$L^2$}-rigidity in von {N}eumann algebras}, Invent.
  Math. \textbf{175} (2009), no.~2, 417--433. \MR{2470111}

\bibitem[Pic06]{picioroaga2006inner}
Gabriel Picioroaga, \emph{The inner amenability of the generalized {T}hompson
  group}, Proceedings of the American Mathematical Society \textbf{134} (2006),
  no.~7, 1995--2002.

\bibitem[Pop06]{Popa06StrongRigidityI}
Sorin Popa, \emph{Strong rigidity of {$\rm II_1$} factors arising from
  malleable actions of {$w$}-rigid groups. {\rm i}}, Invent. Math. \textbf{165}
  (2006), no.~2, 369--408. \MR{2231961}

\bibitem[Pop08]{popa2008superrigidity}
\bysame, \emph{On the superrigidity of malleable actions with spectral gap},
  Journal of the American Mathematical Society \textbf{21} (2008), no.~4,
  981--1000.

\bibitem[PS12]{peterson2012cocycle}
Jesse Peterson and Thomas Sinclair, \emph{On cocycle superrigidity for
  {G}aussian actions}, Ergodic theory and dynamical systems \textbf{32} (2012),
  no.~1, 249--272.

\bibitem[PV08]{popavaes2008}
Sorin Popa and Stefaan Vaes, \emph{Strong rigidity of generalized {B}ernoulli
  actions and computations of their symmetry groups}, Adv. Math. \textbf{217}
  (2008), no.~2, 833--872. \MR{2370283}

\bibitem[R{\"o}v99]{rover1999constructing}
Claas~E R{\"o}ver, \emph{Constructing finitely presented simple groups that
  contain {G}rigorchuk groups}, Journal of Algebra \textbf{220} (1999), no.~1,
  284--313.

\bibitem[Sin11]{sinclair2011strong}
Thomas Sinclair, \emph{Strong solidity of group factors from lattices in
  {$SO(n,1)$} and {$SU(n,1)$}}, Journal of Functional Analysis \textbf{260}
  (2011), no.~11, 3209--3221.

\bibitem[SW23]{skipper2021finiteness}
Rachel Skipper and Xiaolei Wu, \emph{Finiteness properties for relatives of
  braided {H}igman--{T}hompson groups}, Groups, Geometry, and Dynamics
  \textbf{17} (2023), no.~4, 1357--1391.

\bibitem[SZ21]{skipper2021almost}
Rachel Skipper and Matthew~CB Zaremsky, \emph{Almost-automorphisms of trees,
  cloning systems and finiteness properties}, Journal of Topology and Analysis
  \textbf{13} (2021), no.~01, 101--146.

\bibitem[Tan16]{tanushevski2016new}
Slobodan Tanushevski, \emph{A new class of generalized {T}hompson's groups and
  their normal subgroups}, Communications in Algebra \textbf{44} (2016),
  no.~10, 4378--4410.

\bibitem[TD20]{tucker2020invariant}
Robin~D Tucker-Drob, \emph{Invariant means and the structure of inner amenable
  groups}, Duke Mathematical Journal \textbf{169} (2020), no.~12, 2571--2628.

\bibitem[UO17]{HO2016}
Haagerup Uffe and Kristian~Knudsen Olesen, \emph{Non-inner amenability of the
  {T}hompson groups {$T$} and {$V$}}, Journal of Functional Analysis
  \textbf{272} (2017), no.~11, 4838--4852.

\bibitem[Vae07]{vaes07}
Stefaan Vaes, \emph{Rigidity results for {B}ernoulli actions and their von
  {N}eumann algebras (after {S}orin {P}opa)}, Ast\'erisque (2007), no.~311,
  Exp. No. 961, viii, 237--294, S\'eminaire Bourbaki. Vol. 2005/2006.
  \MR{2359046}

\bibitem[Vae13]{vaes2013one}
\bysame, \emph{One-cohomology and the uniqueness of the group measure space
  decomposition of a {$\rm II_1$} factor}, Mathematische Annalen \textbf{355}
  (2013), no.~2, 661--696.

\bibitem[WZ18]{witzel2018thompson}
Stefan Witzel and Matthew~CB Zaremsky, \emph{Thompson groups for systems of
  groups, and their finiteness properties}, Groups, Geometry, and Dynamics
  \textbf{12} (2018), no.~1, 289--358.

\bibitem[Zar18]{UserGuideCloningZar}
Matthew C.~B. Zaremsky, \emph{A user's guide to cloning systems}, Topology
  Proc. \textbf{52} (2018), 13--33. \MR{3661656}

\end{thebibliography}

\end{document}